\documentclass[11pt]{article}

\usepackage[a4paper,margin=1in]{geometry}
\usepackage{amsmath,amssymb,amsthm,mathtools}
\usepackage{microtype}
\usepackage{enumitem}
\usepackage{comment}
\usepackage{xcolor}

\newtheorem{theorem}{Theorem}
\newtheorem{lemma}[theorem]{Lemma}
\newtheorem{claim}[theorem]{Claim}
\newtheorem{proposition}[theorem]{Proposition}
\newtheorem{corollary}[theorem]{Corollary}
\newtheorem{problem}[theorem]{Problem}
\newtheorem{conjecture}[theorem]{Conjecture}
\newtheorem{remark}[theorem]{Remark}

\definecolor{myblue}{RGB}{0,50,250}

\definecolor{mypink}{RGB}{220,80,150}
\usepackage[
    colorlinks=true,
    linkcolor=myblue,   
    citecolor=mypink,   
    urlcolor=red    
]{hyperref}

\newcommand{\avgd}{\overline d}

\newcommand{\E}{\mathbb E}

\title{A Phase Transition for Small Dense Subhypergraphs}
\author{Peiru Kuang\footnote{School of Mathematical Sciences, Shanghai Jiao Tong University, Shanghai 200240, China. Supported by Shanghai Institute for Mathematics and Interdisciplinary Sciences, SIMIS-ID-26-AMS-003. Email: peiru\_k@sjtu.edu.cn}
\and
Yan Wang\footnote{School of Mathematical Sciences, Shanghai Jiao Tong University, Shanghai 200240, China. Supported by National Key R\&D Program of China under Grant No. 2022YFA1006400 and National Natural Science Foundation of China under Grant No. 12571376. Email: yan.w@sjtu.edu.cn (corresponding author).}}
\date{}

\begin{document}
\maketitle

\begin{abstract}
The local--global principle, which concerns the relationship between local structure and global parameters, has attracted considerable attention in extremal combinatorics over the past few decades. In this paper, we study how global density forces small dense subhypergraphs in uniform hypergraphs. For fixed $r\ge3$ and $s>1$, let $t_r(n,d,s)$ be the smallest integer $t$ such that every $n$-vertex $r$-graph of average degree at least $d$ contains a nonempty subhypergraph on at most $t$ vertices with average degree at least $s$. We show that the behavior of $t_r(n,d,s)$ undergoes a phase transition at $s=r/(r-1)$. We determine $t_r(n,d,s)$ and obtain asymptotically sharp bounds in several parameter regimes. This answers, up to polylogarithmic factors, a question of Feige and Wagner that was later restated as Problem~3.3 by Janzer, Sudakov and Tomon. In particular, when $r=3$ and $s=2$, our result implies a conjecture of Feige.
\end{abstract}

\section{Introduction}
An \emph{$r$-uniform hypergraph}, or $r$-graph, is a pair $H=(V(H),E(H))$, where $V(H)$ is a finite set and $E(H)\subseteq\binom{V(H)}{r}$. An $r$-graph $F$ is a \emph{subhypergraph} of an $r$-graph $H$ if $V(F)\subseteq V(H)$ and $E(F)\subseteq E(H)$. For a nonempty $r$-graph $H$, write $\avgd(H)=re(H)/|V(H)|$. 
In this paper, we study the following problem for uniform hypergraphs. If an $n$-vertex $r$-uniform hypergraph has average degree at least $d$, how small a subhypergraph of average degree at least $s$ must it contain? More precisely, for an integer $r\ge2$ and real numbers $d\ge s>1$, let $t_r(n,d,s)$ be the smallest integer $t$ such that every $n$-vertex $r$-graph $H$ with $\avgd(H)\ge d$ contains a nonempty subhypergraph $H'$ with $|V(H')|\le t$ and $\avgd(H')\ge s$, where $d$ is allowed to depend on $n$. 

Many problems in extremal graph theory fall within the framework of the local--global principle, which concerns the relationship between local structure and global parameters; see, for example, \cite{BucicSudakov,JST,LinialLocalGlobal,LinialRabinovich}. Here the direction is global-to-local: a global average-degree condition is used to force a dense subhypergraph on a small number of vertices. The graph case already exhibits two different phenomena. When $s=2$, the problem is equivalent to the classical girth problem. Indeed, $t(n,d,2):=t_2(n,d,2)$ is precisely the maximum possible girth of an $n$-vertex graph of average degree at least $d$. When $d=2$, we have $t(n,2,2)=n$. For $d>2$, the Moore bound, together with classical constructions of graphs of large girth~\cite{LPS,Margulis,Morgenstern}, yields $t(n,d,2)=\Theta(\log_{d-1}n)$, although determining the optimal leading constant remains a major open problem. Alon, Hoory and Linial~\cite{AlonHooryLinial} extended the Moore bound to irregular graphs; see also Ajesh Babu and Radhakrishnan~\cite{BabuRadhakrishnan} for an entropy-based proof. 
This problem is also closely related to the densest $k$-subgraph problem, which asks for a $k$-vertex subgraph of maximum average degree; see~\cite{BhaskaraEtAl,Khot}.
Related problems replace average degree by other degree restrictions. Erd\H{o}s, Faudree, Rousseau and Schelp~\cite{EFRS,EFRS2} studied the analogous problem with the stronger requirement $\delta(H')\ge s$, where $s\ge2$ is an integer. Sauermann~\cite{Sauermann} proved their conjecture that every sufficiently large $n$-vertex graph with at least $(s-1)n-\binom{s}{2}+2$ edges contains a subgraph of minimum degree at least $s$ on at most $(1-\varepsilon_s)n$ vertices, for some $\varepsilon_s>0$. A further strengthening asks for a small $s$-regular subgraph. Without a restriction on its order, the corresponding existence problem for fixed $s\ge3$ is the Erd\H{o}s--Sauer problem \cite{Erdos1975}, resolved by Janzer and Sudakov~\cite{JanzerSudakovRegular}.


For $s>2$, the dependence on $n$ and $d$ becomes polynomial rather than logarithmic. A random graph, followed by the deletion of all small dense configurations, gives $t(n,d,s)\ge c_snd^{-s/(s-2)}$. Feige and Wagner~\cite{FW} conjectured that this lower bound is sharp up to polylogarithmic factors. Janzer, Sudakov and Tomon~\cite{JST} proved their conjecture by showing that $t(n,d,s)\le nd^{-s/(s-2)}(\log d)^{O_s(1)}$ whenever $d\le n^{(s-2)/s}$. They further proved that, for every real $s>2$ and every $\varepsilon>0$, there exists a constant $T=T(s,\varepsilon)$ such that every sufficiently large $n$-vertex graph of average degree at least $n^{1-2/s+\varepsilon}$ contains a subgraph of average degree at least $s$ on at most $T$ vertices, which confirmed a conjecture of Verstra\"ete. Jiang and Newman~\cite{JiangNewman} had previously proved the same result when $s$ is an integer. 

Feige and Wagner~\cite{FW} proposed the following hypergraph problem, which was later restated by Janzer, Sudakov and Tomon~\cite{JST}.
\begin{problem}[\cite{FW}, \cite{JST}]\label{prob:JST}
For fixed $r\ge3$ and $s>1$, determine the asymptotic behavior of $t_r(n,d,s)$.
\end{problem}
In this paper, we resolve Problem~\ref{prob:JST} up to polylogarithmic factors and obtain asymptotically sharp bounds in several parameter ranges.
The value $c_r=r/(r-1)$ is a natural structural threshold for the problem. Indeed, writing \emph{excess} $\xi_r(H):=(r-1)e(H)-|V(H)|$, we have $\avgd(H)<c_r$, $\avgd(H)=c_r$, and $\avgd(H)>c_r$ correspond, respectively, to negative, zero, and positive excess. For connected $r$-graphs, these are the hypertree, unicyclic, and complex regimes. Our results show that the behavior of $t_r(n,d,s)$ changes precisely across the three ranges $s<c_r$, $s=c_r$, and $s>c_r$. This trichotomy is reflected in a phase transition in the behavior of $t_r(n,d,s)$ at $s=c_r$: our results exhibit qualitatively different asymptotics in the subcritical, critical, and supercritical regimes. 

\subsection{The case \texorpdfstring{$s>c_r$}{s > c(r)}}

Define $\alpha_{r,s}=s/((r-1)s-r)$.

\begin{theorem}\label{thm:main}
For every fixed integer $r\ge3$ and real number $s>c_r$, there is a constant $C=C(r,s)$ such that the following holds for every sufficiently large $d$. Let $H$ be an $n$-vertex $r$-graph of average degree at least $d$, where $d\le n^{1/\alpha_{r,s}}$. Then $H$ contains a nonempty subhypergraph $H'$ of average degree at least $s$ such that
$
 |V(H')|\le nd^{-\alpha_{r,s}}(\log d)^C.
$
\end{theorem}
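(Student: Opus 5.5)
We plan to adapt the Janzer--Sudakov--Tomon argument for graphs to $r$-graphs; the natural route is to reduce to a graph-like setting with the right exponent. First we regularize: by standard dyadic pigeonholing and passing to a subhypergraph, we may assume $H$ is almost regular, with all degrees between $d/(\log d)^{O(1)}$ and $d$. This loses only polylogarithmic factors, and since $n d^{-\alpha_{r,s}}$ only improves as $n$ shrinks relative to $d$, we may also assume $n$ is not much larger than needed. The target size is $k:=nd^{-\alpha_{r,s}}(\log d)^C$. Heuristically, a random $r$-graph with degree $d$ has expected number of $k$-vertex subhypergraphs with $m=sk/r$ edges of order $\binom nk (d/n^{r-1})^{m}k^{rm}$. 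This is at most $1$ exactly when $k\approx nd^{-\alpha_{r,s}}$. Indeed, the exponent balance $k\cdot(r-1)m$-type counting gives $n^{k}(d/n^{r-1})^{sk/r}k^{sk}\approx 1$, i.e.\ $k^{s}\approx n^{(r-1)s/r-1}d^{-s/r}$ after normalisation, which is $\alpha_{r,s}$. The proof must turn this count into a deterministic growth argument.

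The core step is a ``dense walk'' or BFS-type exploration. Fix a vertex and grow a set $S$ greedily, repeatedly adding an edge meeting $S$ in at least one vertex. Each added edge contributes $r-1$ new vertices in the hypertree regime. If an edge meets $S$ in $\ge2$ vertices it decreases the excess deficit. Our plan is to show that in an almost-regular $H$ one can find a set $S$ of size $k$ spanning at least $sk/r$ edges. Following JST, we would consider a random sample $U\subseteq V(H)$ of density $p$ chosen so that $H[U]$ has average degree about $dp^{r-1}$. We then apply a supersaturation/counting lemma: either some ball-like neighbourhood of size $\le k$ has many excess edges (done), or neighbourhoods expand at rate roughly $(r-1)\cdot$degree. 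In the latter case we double-count ``$s$-dense configurations'' to contradict $|V(H)|=n$. Concretely, the expansion case gives that the number of vertices reached within $\ell$ steps is about $(d(r-1))^{\ell}$. Choosing $\ell$ so this exceeds $n$ forces overlaps, i.e.\ positive excess in a small set. Repeating the overlap-finding inside the small set, in the spirit of JST's iterative densification, boosts the average degree from $c_r+\epsilon$ up to $s$.

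The main obstacle is this densification step, which must reach average degree exactly $s$ rather than just above $c_r$ while keeping size $nd^{-\alpha_{r,s}}$ up to polylogs. We would first attempt it by induction on a parameter measuring how far $s$ is above $c_r$. We combine many small positive-excess subhypergraphs (each of size $O(\log n)$ found as above) that share vertices. A union of $t$ such pieces with total excess growing linearly in the union size yields density $s$, using the relation $\avgd = r e/v$ and the excess $(r-1)e-v$. Controlling overlaps requires the regularity from the first step and a random sparsification to the critical density $p$ with $dp^{(r-1)s/r\cdot\ldots}$ tuned so that the expected structure has the right count. The polylog losses accumulate through the $O(1)$ iterations, giving $(\log d)^C$.
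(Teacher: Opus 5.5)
There is a genuine gap. It sits exactly at the step you flag as the main obstacle, and that step carries essentially the whole content of the theorem. Since $\avgd(F)\ge s$ is equivalent to $\xi_r(F)\ge\bigl(\tfrac{(r-1)s}{r}-1\bigr)|V(F)|$, you need a set of order about $nd^{-\alpha_{r,s}}$ whose excess is a fixed positive fraction of its order.

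Your exploration step does not give this. Overlaps in a breadth-first search whose layers grow like $((r-1)d)^{\ell}$ produce a Berge cycle, i.e.\ excess $\ge 0$. That only settles the critical case $s=c_r$; it is the Moore-bound argument of Proposition~\ref{prop:critical-upper}. With more work you get pieces of excess $O(1)$, i.e.\ average degree $c_r(1+O(1/|V|))$.

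Gluing such pieces does not help by itself. For edge-disjoint $F_1,\dots,F_t$ one has $\xi_r\bigl(\bigcup_iF_i\bigr)=\sum_i\xi_r(F_i)+\sum_i|V(F_i)|-\bigl|\bigcup_iV(F_i)\bigr|$. So essentially all of the required excess must come from vertex coincidences between pieces, i.e.\ from forcing many pieces to reuse a small common vertex set. Nothing in the proposal produces such coincidences or explains why the resulting order would be $nd^{-\alpha_{r,s}}$. The sampling density is left unspecified (``$dp^{(r-1)s/r\cdot\ldots}$''), and the induction parameter is never defined. The first-moment count you give only explains why the random construction is extremal, not how to find the dense set. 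Appealing to the Janzer--Sudakov--Tomon densification does not close this either, since their argument is for graphs and carrying it to $r$-graphs is precisely what is at stake.

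The missing idea, and the way the paper creates exactly this overlap, is to reduce the uniformity instead of exploring directly. Lemma~\ref{lem:compression} with $k=\lceil pn\rceil$ gives two alternatives:
either many edges contain an $(r-1)$-set of codegree at least $A/p$, and then Hall's theorem plus a random $k$-set give a subhypergraph of average degree at least $s$ on $O(pn)$ vertices;
or there are a $k$-set $X$ and an $(r-1)$-graph $J$ on $V(H)\setminus X$ with $\avgd(J)=\Omega(pd)$ whose edges extend injectively to edges of $H$ through $X$.
In the second case, any $W$ with $re_J(W)\ge s(|W|+k)$ lifts to the required subhypergraph on at most $|W|+k$ vertices. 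The common set $X$ is the massive overlap your plan lacks.

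For $s>r/(r-2)$ the target in $J$ is about $(r-1)s/r>c_{r-1}$, so one inducts on $r$ down to the graph theorem at $r=2$. One takes $p=(\log d)^4d^{-b}$ with $(1-b)\alpha_{r-1,q}=b$ so that the exponents match. The induction hypothesis is applied repeatedly to $J$ with the edges inside the current $W$ removed, using the boosted target $q=(r-1)s/r+(\log d)^{-A/2}$, until the lifting inequality holds or $|W|\ge Bk$. Because $q$ depends on $d$, the induction must be run for the stable statement of Proposition~\ref{prop:stable-supercritical}, which is uniform in the target. At $s=r/(r-2)$ the reduced target is critical, and Lemma~\ref{lem:excess-localization} (excess $2k$ on $O(k\log d)$ vertices of $J$) suffices. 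For $s<r/(r-2)$ one projects to a graph via bounded pair codegrees and applies the graph result with target $2s/(r-(r-2)s)$.
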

Let $d_0=d_0(r,s)$ be such that Theorem~\ref{thm:main} holds for every $d\ge d_0$. The restriction that $d$ be sufficiently large is not crucial. Indeed, if $s\le d<d_0$, then the hypergraph itself has average degree at least $s$, and hence $t_r(n,d,s)\le n\le d_0^{\alpha_{r,s}}nd^{-\alpha_{r,s}}$. 

Note that Theorem~\ref{thm:main} is stated under the assumption $d\le n^{1/\alpha_{r,s}}$, it also applies to $r$-graphs of average degree greater than $n^{1/\alpha_{r,s}}$ by taking
$d=n^{1/\alpha_{r,s}}$. Thus, for all sufficiently large $n$, every such $r$-graph contains a subhypergraph of average degree at least $s$ on at most $(\log n)^{O_{r,s}(1)}$ vertices.


A standard random construction gives $t_r(n,d,s)\ge c n d^{-\alpha_{r,s}}$ for some $c=c(r,s)>0$. Thus the dependence on $n$ and $d$ in Theorem~\ref{thm:main} is optimal up to polylogarithmic factors. However, at the threshold $d=n^{1/\alpha_{r,s}}$, the term $nd^{-\alpha_{r,s}}$ equals one and does not capture the full behavior. The following stronger lower bound exhibits the additional logarithmic term that arises near this threshold.
\begin{theorem}\label{thm:lower}
For every fixed integer $r\ge3$ and real number $s>c_r$, there is a constant $c=c(r,s)>0$ such that, for all sufficiently large $n$ and every $s\le d\le n^{1/\alpha_{r,s}}$, writing $w=nd^{-\alpha_{r,s}}$ and $\Lambda=\log(2+nd)$, we have
$$
t_r(n,d,s)\ge c\left(
w+\frac{\Lambda}{\log(2+\Lambda/w)}
\right).
$$
\end{theorem}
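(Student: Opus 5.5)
We prove the two terms of the lower bound separately, since $t_r(n,d,s)\ge c\max\{A,B\}\ge (c/2)(A+B)$. For the term $w=nd^{-\alpha_{r,s}}$ we use the standard alteration argument. For the term $\Lambda/\log(2+\Lambda/w)$ we use a large-girth-type construction. Throughout, it suffices to exhibit an $n$-vertex $r$-graph $H$ with $\avgd(H)\ge d$ in which every nonempty subhypergraph on at most $t$ vertices has average degree less than $s$. We may restrict attention to subhypergraphs $H'$ that are spanned by their edges. The condition $\avgd(H')\ge s$ then reads $e(H')\ge s v/r$, where $v=|V(H')|$.

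\emph{The term $w$.} Take the random $r$-graph $H\sim H^{(r)}(n,p)$ with $p=Kd/n^{r-1}$ for a large constant $K$. The expected number of edges is of order $K dn/r$. For a fixed $v\le t$, we bound the expected number of pairs $(S,F)$ with $|S|=v$ and $F$ a set of $m=\lceil sv/r\rceil$ edges inside $S$. This number is at most
\[
\binom{n}{v}\binom{\binom{v}{r}}{m}p^m\le \Big(\frac{en}{v}\Big)^v\Big(\frac{e v^r K d}{m\, n^{r-1}}\Big)^m .
\]
Using $m\approx sv/r$, the $n$-exponent is $v-(r-1)sv/r=-v((r-1)s-r)/r<0$, since $s>c_r$. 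The whole expression is therefore roughly
\[
\Big[C_{r,s}\, v^{\,r s/r - 1}\, (Kd)^{s/r}\, n^{-((r-1)s-r)/r}\Big]^{v}
=\Big[C\,(v/w)^{((r-1)s-r)/r}K^{s/r}\Big]^{v},
\]
because $d^{s/r}n^{-((r-1)s-r)/r}=(n d^{-\alpha_{r,s}})^{-((r-1)s-r)/r}$. So for $v\le c w$ with $c$ small compared with $K$, the terms decay geometrically in $v$. Their sum over $2\le v\le cw$ is at most a small constant times... but small $v$ needs care. Here we use that $w\ge1$ (as $d\le n^{1/\alpha}$), and the bracket is less than $1/2$, so the sum is at most $O(1)$, in fact $\le\sum 2^{-v}$. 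We then delete one edge from each bad configuration. In expectation this removes $O(1)$ edges, or at most a $o(dn)$ fraction after a first-moment bound. Alternatively we delete all edges in bad sets and use Markov's inequality. This leaves average degree at least $d$ after adjusting $K$. Note that rounding $m$ up only helps.

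\emph{The logarithmic term.} When $w\lesssim \Lambda/\log(2+\Lambda/w)$, we need configurations of size $L:=c\Lambda/\log(2+\Lambda/w)$. Here the random count must be redone more carefully, and this is the main obstacle. The plan is to keep the same first-moment computation but use the exact bound $(v/w)^{\gamma v}$ with $\gamma=((r-1)s-r)/r$, against the multiplicative loss from the number of edge sets. The key observation is that for $v\gg w$ the bracket exceeds $1$, so plain deletion of all bad configurations fails. Instead we should count only \emph{minimal} dense configurations, for which $e=\lceil sv/r\rceil$ is tight. We then delete them, requiring the total expected number $\sum_{v\le L}(Cv/w)^{\gamma v}$ to be at most $\epsilon\, dn$. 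Taking logarithms, this holds provided $\gamma L\log(CL/w)\le \log(\epsilon dn)\approx\Lambda$, which is exactly $L\asymp \Lambda/\log(2+\Lambda/w)$. Deleting one edge per surviving configuration then destroys at most $\epsilon dn/r$ edges. Since any dense $H'$ with at most $L$ vertices contains a minimal one, deleting an edge from each minimal configuration kills every dense subgraph. The delicate points are the following. First, the regime $d$ near constant, where $\Lambda\approx\log n$ and we are essentially in a high-girth hypergraph setting. Second, the exact rounding of $m$ for small $v$ (for $v<r$ there are no edges). Third, ensuring the deletions preserve $\avgd\ge d$ by starting with $2d$. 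Combining the two constructions, via the maximum, gives the theorem.
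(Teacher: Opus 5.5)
Your proposal is correct and follows essentially the paper's argument: a binomial random $r$-graph with edge probability $\Theta(d/n^{r-1})$, a first-moment count of $v$-sets carrying $\lceil sv/r\rceil$ edges, each bounded by $\bigl[C(v/w)^{((r-1)s-r)/r}\bigr]^v$, and deletion of one edge per configuration. Your two regimes match the paper's two cases $w\ge\Lambda$ and $w<\Lambda$: for $v\lesssim cw$ the total count is $O(1)$, and for $v$ up to $c\Lambda/\log(2+\Lambda/w)$ it is $e^{O(c\Lambda)}=o(nd)$; the only difference is that the paper uses a single threshold $K$ rather than taking the maximum of two constructions. You assert without checking that ``rounding $m$ up only helps''. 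This requires the per-edge factor $C\,d(v/n)^{r-1}$ to be at most $1$ for every $v$ in range, and the paper verifies exactly this in its Claim, using $d\le n^{(r-1)-r/s}$ together with $v=O(cw)$ or $v=O(c\Lambda)$.
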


If $w\ge\Omega(\Lambda)$, then $\log(2+\Lambda/w)=\Theta(1)$, so Theorem~\ref{thm:lower} gives $t_r(n,d,s)=\Omega_{r,s}(w)$. Together with Theorem~\ref{thm:main}, this determines $t_r(n,d,s)$ up to a polylogarithmic factor in this range. If $w=o(\Lambda)$, then $\Lambda/\log(2+\Lambda/w)$ exceeds $w$ by an unbounded factor. 
In fact, throughout the range $(n/\log n)^{1/\alpha_{r,s}}\ll d\le n^{1/\alpha_{r,s}}$, we have $w=o(\Lambda)$, so the second term in Theorem~\ref{thm:lower} exceeds $w$ by an unbounded factor. At the endpoint $d=n^{1/\alpha_{r,s}}$, we have $w=1$ and $\Lambda=\Theta_{r,s}(\log n)$, and hence $t_r(n,d,s)=\Omega_{r,s}(\log n/\log\log n)$. Thus some logarithmic factor in Theorem~\ref{thm:main} is unavoidable.
Janzer, Sudakov and Tomon~\cite{JST} also observed that a logarithmic correction is necessary in the graph case when $d=\Omega(n^{(s-2)/s})$.

Theorem~\ref{thm:lower} shows that no bound independent of $n$ is possible at $d=n^{1/\alpha_{r,s}}$. Such a bound does hold if the exponent of $n$ is increased by any fixed $\varepsilon>0$.

\begin{theorem}\label{thm:high-density}
For every fixed integer $r\ge3$, real number $s>c_r$ and $\varepsilon>0$, there is a constant $T=T(r,s,\varepsilon)$ such that the following holds for all sufficiently large $n$. Every $n$-vertex $r$-graph of average degree at least $n^{1/\alpha_{r,s}+\varepsilon}$ contains a nonempty subhypergraph of average degree at least $s$ on at most $T$ vertices. Moreover, for fixed $r$, $s$ and $\varepsilon$, such a subhypergraph can be found in time $n^{O_{r,s,\varepsilon}(1)}$.
\end{theorem}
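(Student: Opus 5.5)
We plan to derive Theorem~\ref{thm:high-density} from Theorem~\ref{thm:main} by an iterative "densify and restrict" argument, in the spirit of the Janzer--Sudakov--Tomon proof of Verstra\"ete's conjecture. Write $\alpha=\alpha_{r,s}$ and $D=n^{1/\alpha+\varepsilon}$. Since $D>n^{1/\alpha}$, Theorem~\ref{thm:main} applied with $d=n^{1/\alpha}$ only yields a subhypergraph of polylogarithmic order, so the extra $\varepsilon$ has to be used to remove the $n$-dependence entirely. The plan is to find, at each step, a vertex set $U$ of much smaller size $m$ such that $H[U]$ still has average degree at least $m^{1/\alpha+\varepsilon'}$ for some $\varepsilon'>0$ depending only on $r,s,\varepsilon$. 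We then iterate a bounded number of times until $m$ is bounded in terms of $r,s,\varepsilon$.

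The key step is a density increment: an $n$-vertex $r$-graph $H$ with $\avgd(H)\ge n^{1/\alpha+\varepsilon}$ contains a subhypergraph on $m\le n^{1-\delta}$ vertices with average degree at least $m^{1/\alpha+\varepsilon/2}$, or else directly contains a bounded-size subgraph of average degree $\ge s$. The first move is to pass to a subhypergraph of large minimum degree by the usual deletion argument, giving minimum degree $\ge D/r$. Next, pick a random subset $U$ of $m=n^{\beta}$ vertices for a suitable $\beta<1$. Each edge survives with probability about $(m/n)^r$, so the expected average degree of $H[U]$ is about $D(m/n)^{r-1}$. We need $D(m/n)^{r-1}\ge m^{1/\alpha+\varepsilon/2}$, i.e.\ $n^{1/\alpha+\varepsilon-(r-1)(1-\beta)}\ge n^{\beta(1/\alpha+\varepsilon/2)}$. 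At $\beta=1$ this holds with slack $n^{\varepsilon/2}$, so some $\beta=1-\delta$ with $\delta=\delta(r,s,\varepsilon)>0$ works. Concentration is handled by choosing a $U$ for which $e(H[U])$ is at least its expectation, which is achievable by averaging. This simple sampling already shrinks $n$ polynomially while keeping the exponent above $1/\alpha$.

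We iterate this step. The exponent drops from $\varepsilon$ to $\varepsilon/2$, so we must be careful: the slack should be split as $\varepsilon\to\varepsilon(1-\eta)$ with geometric control, or else we should stop after a fixed number of rounds once $n$ has become a constant power smaller. Since each round multiplies $\log n$ by $(1-\delta)$ and we need $n$ bounded, a naive iteration requires about $\log\log n$ rounds, and the loss in $\varepsilon$ would accumulate. This is the main obstacle. To deal with it, I would instead choose $\beta$ so that the sampled graph lands exactly at threshold with an $\varepsilon$-power surplus that is preserved. Concretely, sample down to $m$ constant-sized blocks: take $m=T$ a large constant and compute the expected number of edges, $e(H)(T/n)^r\approx n^{1/\alpha+\varepsilon-(r-1)}T^r/r$. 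This is too small directly, so sampling alone fails for constant $T$, and one needs the structure from the proof of Theorem~\ref{thm:main}, namely the counting of dense configurations. The alternative route I would therefore pursue is a supersaturation count. Let $F$ range over the finitely many $r$-graphs on at most $T$ vertices with average degree $\ge s$ that are ``minimally'' so. The random-deletion lower-bound computation shows that the expected number of copies in $G^{(r)}(n,p)$ at density $p\approx D/n^{r-1}$ exceeds the number of vertices by $n^{\Omega(\varepsilon)}$ once $F$ has enough excess. The task is then to prove a deterministic embedding (supersaturation) of such an $F$ via a greedy / dependent-random-choice tree-building argument: grow $r$-uniform hypertrees from a vertex using the minimum degree $D/r$, and find two branches closing up to create excess. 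Each closure adds excess $1$, and a bounded number $k(r,s,\varepsilon)$ of closures suffices to reach average degree $s$. The closures exist because the number of leaves $\approx D^{\ell}$ exceeds $n^{\,\cdot}$ at the right depth, precisely when $D>n^{1/\alpha+\varepsilon}$. The algorithmic claim follows since everything is brute-force search over sets of size at most $T$, which takes $n^{T}$ time.
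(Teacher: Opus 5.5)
\textbf{There is a genuine gap.}

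\textbf{The sampling iteration cannot work.} The obstacle you identify is fatal, not a matter of bookkeeping. Suppose an $n$-vertex $r$-graph has average degree $n^{\gamma}$; necessarily $\gamma<r-1$. A random set of $m=n^{\beta}$ vertices then spans average degree about $n^{\gamma}(m/n)^{r-1}=m^{(r-1)-(r-1-\gamma)/\beta}$, and $(r-1)-(r-1-\gamma)/\beta<\gamma$ for every $\beta<1$. So every round of random restriction strictly lowers the exponent. No splitting of the slack $\varepsilon$ brings you to bounded order; your computation for constant $T$ is the extreme case of this.

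\textbf{The supersaturation paragraph is a heuristic, not a proof.} Everything therefore rests on it. A first-moment count in $G^{(r)}(n,p)$ says nothing about an arbitrary host hypergraph. A Moore-type count of leaves yields one short closure at a time. Nothing in your sketch forces many closures into a single bounded vertex set so that the excess is proportional to the order, which average degree $s$ requires: $\xi_r(F)\ge |V(F)|((r-1)s-r)/r$. Already for $r=2$, this deterministic step is the Janzer--Sudakov--Tomon theorem (Theorem~\ref{thm:JST-high}), which the paper uses as a black box rather than reproving.

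\textbf{Your closure mechanism provably fails for $s\ge r/(r-2)$.} Consider a connected structure grown from a hypertree by closures that each add excess exactly $1$, as you describe. Every edge after the first meets the union of the earlier edges in at most two vertices, so it brings at least $r-2$ new vertices. Hence $|V(F)|\ge (r-2)e(F)+2$ and $\avgd(F)<r/(r-2)$. Beyond this value you must use edges meeting the current structure in at least three vertices, that is, exploit large codegrees of larger vertex sets. This is the missing idea.

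\textbf{How the paper proceeds.} It splits exactly at $r/(r-2)$ and inducts on $r$.

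For $c_r<s<r/(r-2)$, set $q=2s/(r-(r-2)s)$. Either some pair has codegree at least $\lceil q\rceil$, and $\lceil q\rceil$ edges through it already suffice. Or choosing one pair inside each edge gives a graph of average degree at least $n^{1-2/q+\varepsilon/2}$, using $1-2/q=1/\alpha_{r,s}$. Theorem~\ref{thm:JST-high} then gives a bounded subgraph of average degree at least $q$, whose edges are lifted back.

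For $s\ge r/(r-2)$, Lemma~\ref{lem:compression} with $p=r/n$ gives one of two outcomes. Either it produces a bounded dense subhypergraph directly. Or it produces a set $X$ of $r$ vertices and an $(r-1)$-graph $G$ on $V(H)\setminus X$ of average degree $\Omega(n^{1/\alpha_{r,s}-1+\varepsilon})$, whose edges extend to distinct edges of $H$ through $X$. In the second case, choose $q$ slightly above $(r-1)s/r$ so that $1/\alpha_{r-1,q}<1/\alpha_{r,s}-1+\varepsilon/2$. Apply the induction hypothesis repeatedly to $G$ minus the edges inside the current set $W$, until $re_G(W)\ge s(|W|+r)$ or $|W|\ge Br$. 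Then lift $G[W]$ through $X$.

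Your brute-force algorithmic remark is fine, but only once a bound $T$ independent of $n$ has been established.
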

There is also an exact Tur\'an reformulation. For fixed $r$ and $s$, and a positive integer $t$, let $\mathcal F_{r,s,t}$ be the family of all $r$-graphs on at most $t$ vertices with average degree at least $s$ and let $\operatorname{ex}_r(n,\mathcal F)$ denote the maximum
number of edges in an $n$-vertex $r$-graph containing no member of $\mathcal F$ as a subhypergraph. Then $t_r(n,d,s)>t$ if and only if $\operatorname{ex}_r(n,\mathcal F_{r,s,t})\ge nd/r$. Thus the problem can be viewed as a local-density version of the Brown--Erd\H{o}s--S\'os problem \cite{BrownErdosSos}. Related extremal problems with prescribed numbers of vertices and edges were studied by Alon and Shapira~\cite{AlonShapira}, and the corresponding graph problem was studied by Jiang and Newman~\cite{JiangNewman}. Since $1+1/\alpha_{r,s}=r-r/s$, Theorem~\ref{thm:lower} implies that, for every fixed positive integer $t$ and all sufficiently large $n$,
$$
\operatorname{ex}_r(n,\mathcal F_{r,s,t})\ge \frac{1}{r}n^{r-r/s}.
$$

Conversely, Theorem~\ref{thm:high-density} implies that, for every $\varepsilon>0$, there is a positive integer $T=T(r,s,\varepsilon)$ such that, for all sufficiently large $n$,
$$
\operatorname{ex}_r(n,\mathcal F_{r,s,T})<\frac{1}{r}n^{r-r/s+\varepsilon}.
$$

Hence
$$
\lim_{t\to\infty}\limsup_{n\to\infty}\frac{\log\operatorname{ex}_r(n,\mathcal F_{r,s,t})}{\log n}=r-\frac{r}{s}.
$$

Following Feige~\cite{FeigeEven}, the notation $\widetilde O(f)$ suppresses a multiplicative $O(\log n)$ factor. First, for $r\ge3$, the case $s=2$ is closely related to the even cover problem. An \emph{even cover} is a nonempty set of hyperedges in which every vertex has even degree. Early work on short linear dependencies and even covers includes that of Naor and Verstra\"ete~\cite{NaorVerstraete}. These questions are also closely connected to the refutation of random and semirandom constraint satisfaction problems and to locally decodable codes; see, for example, Feige, Kim and Ofek~\cite{FeigeKimOfek} and Alrabiah, Guruswami, Kothari and Manohar~\cite{AGKM}. Feige~\cite{FeigeEven} conjectured that every $n$-vertex $r$-graph of average degree $d$ contains an even cover with $\widetilde O(nd^{-2/(r-2)})$ edges. This conjecture was proved by Guruswami, Kothari and Manohar~\cite{GKM}; Hsieh, Kothari, Mohanty, Munh\'a Correia and Sudakov~\cite{HKMCS} later gave a purely combinatorial proof and improved the logarithmic factor for odd uniformity. More recently, Bandeira, Kunisky,
Nizi\'c-Nikolac, Pesenti and Wang~\cite{BKKNPW}, in the current version of their paper, and Schmidhuber and Hastings~\cite{SchmidhuberHastings} proved the sharp hypergraph
Moore bound for every $r\ge3$.

Feige~\cite[Conjecture~1.7]{FeigeEven} also proposed the following conjecture on small dense subhypergraphs of $3$-uniform hypergraphs.

\begin{conjecture}[Feige~\cite{FeigeEven}]\label{conj}
For all sufficiently large $n$ and every $2\le d\le n^{1/2}$,
$$
t_3(n,d,2)=\widetilde O(n/d^2).
$$
\end{conjecture}

As a consequence of Theorem~\ref{thm:main}, we have the following, which resolves Conjecture~\ref{conj}. 
\begin{corollary}
For all sufficiently large $n$ and every $2\le d\le n^{1/2}$,
$$
t_3(n,d,2)\le nd^{-2}(\log d)^{O(1)}.
$$
\end{corollary}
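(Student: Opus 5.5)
This corollary follows from Theorem~\ref{thm:main} by specializing to $r=3$, $s=2$, together with a separate treatment of the finitely many small values of $d$. First check that $s=2$ is in the supercritical regime: $c_3=3/2<2$, so Theorem~\ref{thm:main} applies. Next compute the exponent, $\alpha_{3,2}=2/((3-1)\cdot 2-3)=2$. Hence the hypothesis $d\le n^{1/\alpha_{3,2}}$ reads exactly $d\le n^{1/2}$, and the conclusion gives $|V(H')|\le nd^{-2}(\log d)^{C}$ with $C=C(3,2)$ an absolute constant, as required.

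Theorem~\ref{thm:main} only holds for $d\ge d_0=d_0(3,2)$, so the range $2\le d<d_0$ needs separate handling, following the remark after Theorem~\ref{thm:main}. If $\avgd(H)\ge d\ge 2=s$, then $H$ itself (nonempty, since $d\ge2>0$) has average degree at least $2$. This gives
$$t_3(n,d,2)\le n\le d_0^{2}\,nd^{-2}.$$
The factor $d_0^2$ is a constant.

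The only small nuisance is the form of the bound: $(\log d)^{O(1)}$ must absorb $d_0^2$ even when $\log d$ is small or close to $\log 2$. Since $\log d\ge\log 2>0$ on $[2,d_0)$, the quantity $(\log d)^{C'}$ stays bounded away from $0$ there. So I would write the combined bound as $nd^{-2}\,K(\log d)^{C}$ for a suitable constant $K$, which is what the $O(1)$ notation is meant to permit. If a purely power-of-log form is required, note that $K\le(\log d)^{C''}$ once $d$ exceeds a constant. The remaining bounded range of $d$ is then handled by enlarging the constant in the exponent, or by reading the statement as holding up to an absolute multiplicative constant.

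"Sufficiently large $n$" is used only so that the range $2\le d\le n^{1/2}$ is nonempty and matches the hypotheses of Theorem~\ref{thm:main}. Nothing here is a genuine obstacle; all the content lies in Theorem~\ref{thm:main}.
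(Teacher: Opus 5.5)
Your proposal is correct and is exactly the paper's argument. You specialize Theorem~\ref{thm:main} to $r=3$, $s=2$ (so $\alpha_{3,2}=2$ and the hypothesis reads $d\le n^{1/2}$), and you dispose of $2\le d<d_0$ by the trivial bound $t_3(n,d,2)\le n\le d_0^{2}nd^{-2}$, as in the remark following Theorem~\ref{thm:main}.

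One small correction: the bounded range cannot be absorbed by enlarging the exponent of $\log d$. For $2\le d\le e$ one has $(\log d)^{C}\le 1$ for every $C\ge 0$. The right reading is your other one, that the bound holds up to an absolute multiplicative constant, and this is the reading the paper implicitly uses.
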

Moreover, Theorem~\ref{thm:high-density} gives $t_3(n,d,2)=O_\varepsilon(1)$ whenever $d\ge n^{1/2+\varepsilon}$.

\subsection{The case \texorpdfstring{$s\le c_r$}{s <= c(r)}}

The remaining two ranges admit exact answers. 
Set $M=\lceil s/(r-(r-1)s)\rceil$.

\begin{theorem}\label{thm:subcritical}
Fix an integer $r\ge3$ and a real number $1<s<c_r$. Then $t_r(n,d,s)\le (r-1)M+1$ for every $d\ge s$. Moreover, $t_r(n,s,s)=(r-1)M+1$ for all sufficiently large $n$.
\end{theorem}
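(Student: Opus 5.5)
The upper bound comes from restricting to a suitable connected piece; the matching lower bound comes from a long loose cycle.

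\emph{Key count.} For a connected $r$-graph with $k$ edges, adding edges one at a time so that each new edge meets the current vertex set shows it has at most $(r-1)k+1$ vertices. A linear hypertree with $k$ edges has exactly $(r-1)k+1$ vertices. Its average degree is $rk/((r-1)k+1)$, and
\[
\frac{rk}{(r-1)k+1}\ge s \iff k\bigl(r-(r-1)s\bigr)\ge s \iff k\ge \frac{s}{r-(r-1)s}.
\]
Here $r-(r-1)s>0$ because $s<c_r$. So $M$ is precisely the least number of edges for which even the sparsest connected configuration reaches average degree $s$.

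\emph{Upper bound.} Let $H$ satisfy $\avgd(H)\ge s$. Discard isolated vertices, which only increases the average degree. The quantity $re(H)/|V(H)|$ is a mediant of the ratios of the connected components, so some component $K$ has $\avgd(K)\ge s$.
\begin{itemize}[leftmargin=*]
\item If $e(K)<M$, then $|V(K)|\le (r-1)e(K)+1\le (r-1)M+1$, and $K$ itself is the required subhypergraph.
\item If $e(K)\ge M$, grow a connected subhypergraph $K'\subseteq K$ edge by edge until $e(K')=M$. Then $|V(K')|\le (r-1)M+1$ and
\[
\avgd(K')=\frac{rM}{|V(K')|}\ge \frac{rM}{(r-1)M+1}\ge s
\]
by the key count.
\end{itemize}
This gives $t_r(n,d,s)\le (r-1)M+1$ for every $d\ge s$.

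\emph{Lower bound.} Take a loose cycle $C$ with $L=\lfloor n/(r-1)\rfloor$ edges on $(r-1)L$ vertices, and add at most $r-2$ isolated vertices to reach $n$ vertices. Its average degree is
\[
\frac{rL}{n}\ \ge\ \frac{rL}{(r-1)L+r-2},
\]
which tends to $c_r>s$, so it is at least $s$ for large $n$.

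Now let $H'$ be a nonempty subhypergraph on at most $(r-1)M$ vertices. For large $n$ we have $L\ge M+1$, so $C$ has $(r-1)L>(r-1)M$ vertices and $H'$ cannot contain all of $C$. Hence the edges of $H'$ form a proper subset of the cycle's edges. Such a set is a disjoint union of loose paths, together with possibly isolated vertices. Since $\avgd$ of a union is at most the maximum over its components, it suffices to bound each path component.

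A loose path with $k$ edges has $(r-1)k+1\le (r-1)M$ vertices, which forces $k\le M-1<s/(r-(r-1)s)$. By the key count its average degree is then strictly less than $s$. This holds even when $s/(r-(r-1)s)$ is an integer, since $k\le M-1$ is strict. Therefore $H'$ has average degree less than $s$, so $t_r(n,s,s)>(r-1)M$. Combined with the upper bound, $t_r(n,s,s)=(r-1)M+1$.

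The main point needing care is the reduction to a single component together with the strict inequality at $k=M-1$. The cycle construction is chosen because every proper subgraph of a loose cycle is a forest of loose paths, which is exactly the extremal sparse configuration in the key count.
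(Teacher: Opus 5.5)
Your proof is correct. The upper bound is the paper's own argument: take a component of average degree at least $s$, add edges so each meets the earlier ones, and stop at $M$ edges. The lower bound uses a different construction.

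The paper takes a loose path with $\lceil sn/r\rceil$ edges, padded by isolated vertices. It lists the edges of an arbitrary subhypergraph $F$ in path order. Each edge then meets the union of the earlier ones in at most one vertex, so $|V(F)|\ge (r-1)e(F)+1$ holds for \emph{every} $F$, and $\avgd(F)\le re(F)/((r-1)e(F)+1)<s$ whenever $e(F)<M$. No restriction on $|V(F)|$ is needed.

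Your loose cycle lacks this global property, because the whole cycle has average degree $c_r>s$. You therefore need three extra steps:
\begin{enumerate}
\item use $|V(H')|\le (r-1)M<(r-1)L$ to force $E(H')$ to be a proper subset of the cycle's edges;
\item split that subset into vertex-disjoint loose paths, which relies on $L$ being large;
\item apply the mediant bound together with the strict inequality $k\le M-1<s/(r-(r-1)s)$.
\end{enumerate}
You carry out all three correctly.

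The path version is slightly more economical, since one ordering argument covers all subhypergraphs at once. Yours reuses the unicyclic structure the paper employs at $d=c_r$, and its density tends to $c_r$ rather than sitting near $s$, though a maximal-length path would achieve that too.
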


For $x\ge0$, define
$$
 \Phi_n(x):=
 \begin{cases}
 n,&x=0,\\[1mm]
 \displaystyle\frac{\log(1+nx)}{\log(1+x)},&x>0.
 \end{cases}
$$

Thus $\Phi_n(x)=\Theta(n)$ when $nx=O(1)$, $\Phi_n(x)=\Theta(\log(nx)/x)$ when $n^{-1}\ll x\ll1$, and $\Phi_n(x)=\Theta(1+\log n/\log(x+1))$ when $x\ge1$.

\begin{theorem}\label{thm:critical}
Fix an integer $r\ge3$. For every $n$ and every $d\ge c_r$, let $\gamma=(r-1)d-r$. We have $t_r(n,d,c_r)=\Theta_r(\Phi_n(\gamma))$. Moreover, $t_r(n,c_r,c_r)=n$ whenever $(r-1)\mid n$.
\end{theorem}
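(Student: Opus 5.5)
We prove Theorem~\ref{thm:critical} via the excess $\xi_r(F)=(r-1)e(F)-|V(F)|$. A nonempty $F$ has $\avgd(F)\ge c_r$ exactly when $\xi_r(F)\ge0$. So $t_r(n,d,c_r)$ is the smallest $t$ such that every $n$-vertex $r$-graph $H$ with $\avgd(H)\ge d$ contains a subhypergraph on at most $t$ vertices with nonnegative excess. The excess of $H$ itself is $e(H)\big((r-1)-r/\avgd(H)\big)\ge \gamma e(H)/d$, so $\gamma$ measures the global surplus.

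\emph{Upper bound.} We reduce to a ``core'' in which every vertex lies in at least two edges. Deleting a vertex of degree $0$ raises the excess by one. Deleting a vertex of degree $1$ together with its edge changes the excess by $-(r-1)+1\le0$, so this step needs care. Instead we run a breadth-first exploration, the hypergraph analogue of the Moore-bound argument. Grow a hypertree from a vertex $v$, edge by edge. As long as each newly added edge meets the current vertex set in exactly one vertex, the structure is a Berge-hypertree and has excess $-1$. The first edge meeting the current set in at least two vertices, or any closing of a Berge cycle, yields a Berge cycle, and a Berge cycle has excess $\ge0$. So the task is to bound the length of the shortest Berge cycle (the girth) in terms of $\gamma$. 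In the linear $r$-graph setting the Berge cycle $v_1e_1v_2\dots e_kv_1$ spans $k(r-1)$ vertices and has excess exactly $0$, as required.

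The girth bound is the main step. Take a minimal subhypergraph $H_0$ with $\xi_r(H_0)$ proportional to $\gamma e(H_0)/d>0$ and in which every vertex has degree at least $2$; we obtain it by pruning degree-$\le1$ vertices, where pruning an isolated vertex only helps and pruning a leaf edge keeps the average degree controlled. Pass to the bipartite incidence graph $B$ between $V(H_0)$ and $E(H_0)$. Berge cycles of $H_0$ are cycles of $B$, and $B$ has average ``branching''
\[
(\text{deg}-1)\cdot(r-1)\ \approx\ 1+\Theta(\gamma)
\]
per two steps. The irregular Moore bound of Alon--Hoory--Linial, applied to the non-backtracking walk on $B$, gives girth $O\big(\log(1+n\gamma)/\log(1+\gamma)\big)$: the number of non-backtracking walks of length $2\ell$ grows like $(1+c\gamma)^{\ell}$ and must stay below roughly $n$ before a repetition occurs. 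A shortest Berge cycle then has $O_r(\Phi_n(\gamma))$ vertices. When $\gamma$ is tiny ($n\gamma=O(1)$), the trivial bound $n$ suffices. The hard part is making the irregular Moore bound work for near-critical growth rate $1+\gamma$ with correct dependence, including very unbalanced degree sequences; I would use the entropy version of Babu--Radhakrishnan on the incidence graph.

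\emph{Lower bound.} Take a random or algebraic $r$-graph whose incidence graph has girth $\Omega(\Phi_n(\gamma))$ and average degree $d$. Every subhypergraph on fewer than the girth vertices is then a Berge-hyperforest and has negative excess. For $\gamma\ge1$ we use a high-girth construction, for example a random $r$-graph with short cycles deleted. For small $\gamma$ we subdivide: start from a base $r$-graph of degree about $3$ and high girth, and replace its hyperedges by long loose paths of length $\Theta(1/\gamma)$. This gives average degree $c_r+\Theta(\gamma)$ and girth $\Theta(\log(n\gamma)/\gamma)$. For $d=c_r$ with $(r-1)\mid n$, take a single loose cycle on $n$ vertices. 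It has excess $0$, and every proper subhypergraph is a loose-path forest with negative excess, so the only witness is the whole cycle, giving $t=n$.
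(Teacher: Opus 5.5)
Your overall architecture matches the paper's: a bipartite Moore bound on the incidence graph for the upper bound, and alteration, subdivided high-girth graphs and a loose cycle for the lower bound. However, one upper-bound step fails as written.

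\textbf{The pruning step.} You obtain $H_0$ by deleting, in the hypergraph, each vertex of degree one together with its edge, and assert that this ``keeps the average degree controlled''. It does not. Each such deletion costs $r-2$ units of excess, and it can destroy everything. For example, take a graph $Q$ with $e(Q)>|V(Q)|$ and enlarge each edge $uv$ to $\{u,v\}\cup A_{uv}$ with $r-2$ new vertices. This $r$-graph has excess $e(Q)-|V(Q)|>0$, yet every edge contains a vertex of degree one. So no nonempty subhypergraph has minimum degree $2$, and your $H_0$ does not exist.

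The pruning must be done in $B_H$: repeatedly delete nodes of degree at most one in $B_H$. A degree-one vertex node goes with a single incidence only, so its hyperedge is shrunk rather than deleted. Each step therefore preserves or increases $e(B)-|V(B)|$, and the $2$-core $C$ satisfies $x:=e(C)-|V(C)|\ge (r-1)e(H)-n\ge n\gamma/r>0$.

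\textbf{The quantitative Moore step.} You deferred this as the hard part, but it is short. Let $N,M$ be the two sides of $C$, $E=e(C)$, $a=E/N$ and $b=E/M$. Then $(a-1)(b-1)=(1+x/N)(1+x/M)$. Since $E\ge 2M$, we get $M\le N+x$, and hence $(a-1)(b-1)\ge 1+2x/N\ge 1+2\gamma/r$, using $N\le n$. Hoory's bipartite Moore bound (Lemma~\ref{lem:bipartite-moore}) depends only on the averages $a,b$, which disposes of your worry about unbalanced degree sequences. For girth $2\ell$ it gives $n\ge\sum_{j\le k}(1+2\gamma/r)^j$ with $k=\lfloor(\ell-1)/2\rfloor$, hence $\ell\le r\log(1+n\gamma)/\log(1+\gamma)$. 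The $\ell$ hyperedges of a shortest cycle span at most $(r-1)\ell$ vertices.

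\textbf{The lower bound.} Since $\Phi_n(0)=n$, the theorem claims $t_r(n,c_r,c_r)=\Theta_r(n)$ for every $n$, but you only treat the case $(r-1)\mid n$. If $(r-1)$ does not divide $n$, write $n=(r-1)m+j$ with $0<j\le r-2$. You need an extra construction, for instance a loose cycle on $(r-1)m$ vertices plus one edge made of the $j$ new vertices and $r-j$ evenly spaced cycle vertices. Its incidence graph still has girth $\Omega_r(n)$, and small $n$ is trivial since $t\ge r+1$.

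Your small-$\gamma$ construction is also too vague. ``Replacing hyperedges by long loose paths'' is not well defined for $r$-edges. The regime $n\gamma=O(1)$ is not captured by a girth of order $\log(n\gamma)/\gamma$. And you need $\avgd(H)\ge d$ on exactly $n$ vertices, not merely $c_r+\Theta(\gamma)$.

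The paper instead works with graphs. It takes $Q$ with $v$ vertices, $v+x$ edges and $g(Q)=\Omega((v/x)\log(1+x))$: a subdivided Linial--Simkin cubic graph for large $x$, and a subdivided theta graph or $K_{2,x+2}$ for bounded $x$. It pads each edge of $Q$ with $r-2$ new vertices and chooses $x\equiv -n\pmod{r-1}$ with $n\gamma/r\le x<n\gamma/r+r-1$. This makes the vertex count exactly $n$ and the excess at least $n\gamma/r$.
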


Thus the answer is linear at $d=c_r$, but it becomes logarithmic once $d-c_r$ is bounded away from zero, and eventually becomes bounded as the excess density grows.

\subsection{Proof Overview}

The proof splits into three ranges, which naturally arise from the \emph{excess} $\xi_r(H)=(r-1)e(H)-|V(H)|$. For a connected $r$-graph, excess $-1$, zero excess, and positive excess correspond to the hypertree, unicyclic, and complex cases, respectively; this trichotomy is standard in the study of random hypergraphs~\cite{KaronskiLuczak,SchmidtPruzanShamir}. Since the sign of $\xi_r(H)$ is determined by whether $\avgd(H)$ is below, equal to, or above $c_r=r/(r-1)$, this structural trichotomy underlies the phase transition in the behavior of $t_r(n,d,s)$ at $s=c_r$. Consequently, we consider the three ranges $s<c_r$, $s=c_r$, and $s>c_r$ separately.

The subcritical case is handled by a simple edge-ordering argument, with a linear path construction giving the matching lower bound in Theorem~\ref{thm:subcritical}.
At the critical value $s=c_r$, the problem becomes a girth
problem in the incidence graph (see Lemma~\ref{lem:incidence-cycle}): An $r$-graph contains a subhypergraph of average degree at least $c_r$ if and only if its incidence graph contains a cycle. For the upper bound (see Proposition~\ref{prop:critical-upper}), we apply the bipartite Moore bound of Hoory~\cite{Hoory}; see also~\cite{BabuRadhakrishnan}. For the lower bound (see Proposition~\ref{prop:critical-lower}), near the threshold we take high-girth cubic graphs supplied by Linial and Simkin~\cite{LinialSimkin} and subdivide their edges, while farther from the threshold we use the random deletion argument in Lemma~\ref{lem:random-berge-girth}.

We now turn to the supercritical range $s>c_r$. The lower bound is obtained by an alteration argument. 
For the upper bound, we prove Proposition~\ref{prop:stable-supercritical} by induction on $r$. The main difficulty is that the reduction in uniformity changes the target average degree and introduces additional vertices when the resulting edges are lifted back to the original hypergraph. Consequently, an induction statement for a single fixed target average degree is not sufficient. The base case when $r=2$ is given by Lemma~\ref{lem:JST-uniform}, which is a uniform version of a result of Janzer, Sudakov and Tomon~\cite{JST}. For $r\ge3$, the induction splits into three cases according to the value of $u_0$ compared to $r/(r-2)$.

When $u_0<r/(r-2)$, we can directly reduce to the graph case. If some pair of vertices has sufficiently large codegree, a bounded number of hyperedges containing that pair already form a subhypergraph of the required average degree. Otherwise, every pair has bounded codegree. We then choose a pair $p(e)\subseteq e$ for each hyperedge $e$ and form the graph whose edges are the distinct pairs $p(e)$. This graph has average degree $\Omega_{r,u_0}(d)$. Applying Lemma~\ref{lem:JST-uniform} with target $q=2u/(r-(r-2)u)$, we obtain a small subgraph of average degree at least $q$. Extending a suitable collection of its edges to hyperedges of the original $r$-graph gives the required subhypergraph. 

For $u_0\ge r/(r-2)$, the main ingredient is Lemma~\ref{lem:compression}. Either it gives the required subhypergraph directly or there exist a small set $X$ and an auxiliary $(r-1)$-graph $J$ on
$V(G)\setminus X$ with average degree $\Omega(pd)$, such that every edge $P\in E(J)$ extends to a distinct edge $P\cup\{x_P\}\in E(G)$ for some $x_P\in X$.
The value $r/(r-2)$ appears because the natural target after reducing the uniformity from $r$ to $r-1$ is approximately $(r-1)u/r$. When $u_0=r/(r-2)$, this target is exactly $c_{r-1}$, so the reduction reaches the critical regime for $(r-1)$-graphs and the supercritical induction hypothesis is no longer available. We therefore treat this boundary case separately using Lemma~\ref{lem:excess-localization}, whose excess estimate is strong enough to account for the additional vertices introduced when the edges of the $(r-1)$-graph are extended back to the original $r$-graph.

When $u_0>r/(r-2)$, we have $(r-1)u_0/r>c_{r-1}$, so the induction hypothesis becomes available. Another difficulty is caused by the vertices of $X$. If the induction hypothesis were applied with the natural target $q^*=(r-1)u/r$, then a subhypergraph on a vertex set $W$ would only guarantee
$
re_J(W)\ge u|W|,
$
whereas Claim~\ref{clm:injective-lifting} requires
$
re_J(W)\ge u(|W|+|X|).
$
We therefore apply the induction hypothesis with the slightly larger target
\[
q=\frac{(r-1)u}{r}+(\log d)^{-A/2}.
\]
This increase allows us to choose $B$ such that
$rqB/((r-1)(B+1))\ge u$. We repeatedly apply the induction
hypothesis until either the lifting condition already holds or the selected vertex set $W$ satisfies $|W|\ge B|X|$, in which case the same inequality implies the lifting condition.

Finally, the target $q$ depends on $d$ and converges to $(r-1)u_0/r$. Hence an induction statement proved separately for each fixed target would not provide constants uniform in $d$. This is the reason why we need a stable version of the supercritical regime (see Proposition~\ref{prop:stable-supercritical}). Taking $u_0=u=s$ then proves Theorem~\ref{thm:main}.

\section{Preliminaries}
We first recall the notation and terminology used throughout the paper. For a positive integer $m$, write $[m]=\{1,\ldots,m\}$. For an $r$-graph $H$ and a set $S\subseteq V(H)$, let $d_H(S)=|\{e\in E(H):S\subseteq e\}|$ denote the \emph{codegree} of $S$. The \emph{girth} of $H$ is denoted by $g(H)$. 
The \emph{incidence graph} $B_H$ of an $r$-graph $H$ is the bipartite graph with vertex classes $V(H)$ and $E(H)$, where $v\in V(H)$ is adjacent to $e\in E(H)$ if and only if $v\in e$. We say that $H$ is \emph{connected} if $B_H$ is connected. The \emph{girth} $g(G)$ of a graph $G$ is the length of its shortest cycle, and we define $g(G)=\infty$ if $G$ is acyclic.
For an $r$-graph $F$, define its \textit{excess} by $\xi_r(F)=(r-1)e(F)-|V(F)|$. The notion was introduced by Wright~\cite{Wright}. 

For the upper bound in the critical case, we use the following bipartite Moore bound of Hoory~\cite{Hoory}; also see Babu and Radhakrishnan~\cite{BabuRadhakrishnan}.
\begin{lemma}[\cite{BabuRadhakrishnan,Hoory}]\label{lem:bipartite-moore}
Let $B$ be a bipartite graph with parts $X$ and $Y$, where $|X|=N$ and $|Y|=M$, and suppose that $B$ has minimum degree at least two. Let $a=e(B)/N$ and $b=e(B)/M$. If $B$ has girth $2\ell$, then
$$
N\ge
\sum_{i=0}^{\ell-1}
(b-1)^{\lceil i/2\rceil}
(a-1)^{\lfloor i/2\rfloor}
$$
and
$$
M\ge
\sum_{i=0}^{\ell-1}
(a-1)^{\lceil i/2\rceil}
(b-1)^{\lfloor i/2\rfloor}.
$$
\end{lemma}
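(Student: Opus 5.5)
The statement is the bipartite Moore bound of Hoory, and I would prove it by the entropy method of Babu and Radhakrishnan, which is an irregular version of the count of vertices in a ball of radius below half the girth. The plan is to exhibit, inside $X$, a family of pairwise disjoint sets, one for each $i=0,\dots,\ell-1$, and to show that the $i$-th set has size at least $(b-1)^{\lceil i/2\rceil}(a-1)^{\lfloor i/2\rfloor}$. The bound for $M$ is the same argument with the roles of $X$ and $Y$ swapped.

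\textbf{The random walk.} Let $W=(w_0,w_1,\dots)$ be a non-backtracking random walk in $B$. The starting point $w_0\in X$ is chosen with probability proportional to $\deg(w_0)$, and the first step goes along a uniformly random edge. Equivalently, $(w_0,w_1)$ is a uniformly random edge of $B$ directed from $X$ to $Y$. At every later step the walk moves to a uniformly random neighbour other than the previous vertex; this is possible because $\delta(B)\ge2$. Since $(w_0,w_1)$ is uniform on edges, the directed edge $(w_{j},w_{j+1})$ is again uniform at every step. Hence each vertex $w_j$ is size-biased: it is distributed proportionally to degree within $X$ when $j$ is even and within $Y$ when $j$ is odd.

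\textbf{Injectivity from the girth.} Because the girth is $2\ell$, any two non-backtracking walks of length at most $\ell-1$ with the same start are determined by their endpoints, since otherwise they would close a cycle of length less than $2\ell$. I would turn this into disjoint sets in $X$ via the rooted tree of depth $\ell-1$ hanging from $X$, whose levels alternate between the $X$ and $Y$ sides. Counting this tree level by level, with branching factor $b-1$ out of $Y$-vertices and $a-1$ out of $X$-vertices, produces exactly the terms of the stated sum. The girth guarantees that the relevant vertex sets for different $i$ are pairwise disjoint.

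\textbf{Entropy and the main obstacle.} Lower-bounding each level set is where the real work lies. The entropy of the walk segment $(w_1,\dots,w_i)$ given $w_0$ equals the sum of $\E\log(\deg(w_j)-1)$ over its steps. The support size of a random variable is at least $2^{\text{entropy}}$, so this lower-bounds the level size. The needed inequality is $\E_{\text{size-biased}}\log(\deg-1)\ge\log(\text{average degree}-1)$ on each side. Ordinary Jensen gives the wrong direction because $\log$ is concave, but under the size-biased measure the inequality becomes $\sum_v \deg(v)\log(\deg(v)-1)\ge e(B)\log(e(B)/N-1)$. This follows from convexity of $x\mapsto x\log(x-1)$ on $[2,\infty)$, using $\delta(B)\ge2$. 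The level sizes must then be summed rather than multiplied. This is the delicate point, and I would follow Hoory: average over the root and use that the level sets for distinct depths are disjoint subsets of $X$ (respectively $Y$), so that $N$ dominates their sum.
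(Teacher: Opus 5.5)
The paper does not prove this lemma; it cites Hoory and Babu--Radhakrishnan. Your analytic engine is the Babu--Radhakrishnan one and is sound: the non-backtracking walk from a uniform edge, size-biased marginals on each side, the chain rule, Jensen for the convex function $x\mapsto x\log(x-1)$ on $[2,\infty)$ (this is where $\delta(B)\ge2$ enters), and linearity over the root.

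\textbf{The gap is in the combinatorial skeleton.} A tree hanging from a vertex $x\in X$, or from a single directed edge $x\to y$, has every other level in $Y$. So it does not supply $\ell$ pairwise disjoint subsets of $X$ with the stated sizes. If you root at $x\in X$ as proposed, only the levels at even depth $2j\le\ell-1$ lie in $X$. After averaging over $x$, their sizes are at least $a(b-1)^j(a-1)^{j-1}$, which is the sum of the terms $i=2j-1$ and $i=2j$. For odd $\ell$ this recovers the bound, but for even $\ell$ the last term $i=\ell-1$ is lost. Already for $\ell=2$ your tree gives only $N\ge1$ instead of $N\ge b$.

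\textbf{The fix is to root at an undirected edge.} For $e=xy$ with $x\in X$, let $S_i(e)$ be the set of endpoints of non-backtracking walks of length $i$ that:
\begin{itemize}
\item for even $i$, start at $x$ with first step different from $y$;
\item for odd $i$, start at $y$ with first step different from $x$.
\end{itemize}
All these endpoints lie in $X$. For $i\le\ell-1$, the girth forces distinct such walks to end at distinct vertices and makes $S_0(e),\dots,S_{\ell-1}(e)$ pairwise disjoint, since a collision closes a cycle of length at most $i+j+1\le2\ell-1$. Summing over the $e(B)$ edges, $\sum_e|S_i(e)|$ is the number of non-backtracking walks of length $i+1$ starting in $X$ (odd $i$) or in $Y$ (even $i$). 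Bound its logarithm below by the entropy of the whole walk:
$\log e(B)+\sum_{j=1}^{i}\E\log(\deg(w_j)-1)\ge\log e(B)+\lceil i/2\rceil\log(b-1)+\lfloor i/2\rfloor\log(a-1)$.
Then conclude from $N\ge e(B)^{-1}\sum_e\sum_{i=0}^{\ell-1}|S_i(e)|$.

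\textbf{Two smaller slips.} Using the unconditional entropy also repairs the first. Conditioning on the root, as you do, bounds only the average of $\log$ of the level size, not each level size. So you would otherwise need the extra step $\E L\ge\exp(\E\log L)$ before summing over levels. Second, from a vertex root the first step contributes $\E\log\deg(w_0)$, not $\E\log(\deg(w_0)-1)$.
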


For the lower-bound construction near the critical threshold, we use the existence of cubic graphs with logarithmic girth due to Linial and Simkin~\cite{LinialSimkin}.
\begin{lemma}[Linial and Simkin~\cite{LinialSimkin}]\label{lem:cubic-girth}
There is an absolute constant $c>0$ such that, for every sufficiently large even integer $u$, there is a cubic graph on $u$ vertices with girth at least $c\log u$.
\end{lemma}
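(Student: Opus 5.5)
The plan is to prove an Erd\H{o}s--Sachs type statement directly: for every integer $g\ge 3$ and every even $u\ge N(g):=K\cdot 3^{g}$, with $K$ an absolute constant, there is a cubic graph on $u$ vertices with girth at least $g$. Taking $g=\lfloor c\log u\rfloor$ with $c$ small enough that $K\cdot 3^{g}\le u$ then gives Lemma~\ref{lem:cubic-girth}. The argument is extremal. Among all graphs on a fixed vertex set of size $u$ with maximum degree at most $3$ and girth at least $g$, choose one, $G$, with the maximum number of edges. Call a vertex \emph{deficient} if its degree is less than $3$. The aim is to show that $G$ has no deficient vertex.

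The key distance estimate is this. Balls of radius $g$ in a graph of maximum degree $3$ contain at most $1+3\cdot 2^{g}\le 3^{g+1}$ vertices, so for $u\ge K\cdot 3^g$ any bounded number of vertices leaves most of the graph at distance at least $g$ from all of them. The steps, in order, are as follows.

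\begin{enumerate}[label=(\roman*)]
\item If two deficient vertices $x,y$ satisfy $\operatorname{dist}(x,y)\ge g-1$ (including distance $\infty$), then adding the edge $xy$ creates no cycle shorter than $g$. This contradicts maximality, so all deficient vertices lie pairwise within distance $g-2$.
\item Suppose $x,y$ are distinct deficient vertices. Since $\sum_v d(v)=2e(G)$ is maximal, $G$ has $\Omega(u)$ edges. Choose an edge $ab$ with both endpoints at distance at least $g$ from $x$ and from $y$; such an edge exists because only $O(3^g)$ edges meet the two balls. Delete $ab$ and add $xa$ and $yb$. Degrees stay at most $3$ and the edge count rises by one. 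Any new short cycle must use $xa$ or $yb$. A cycle using exactly one of them yields an $x$--$a$ or $y$--$b$ path of length less than $g$ in $G-ab$, which is impossible. A cycle using both has length at least $\operatorname{dist}(a,\{x,y\})+\operatorname{dist}(b,\{x,y\})\ge 2g$. This contradicts maximality, so at most one vertex is deficient.
\item Since $\sum d(v)$ is even and $3u$ is even, the total deficiency $\sum_v (3-d(v))$ is even. A single deficient vertex $x$ therefore has degree $1$ or $3-2=1$ (or degree $0$ with deficiency $3$ excluded by parity), so $x$ has deficiency $2$. Take an edge $ab$ with $\operatorname{dist}(x,\{a,b\})\ge g$, delete it, and add $xa$ and $xb$. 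Any cycle through $x$ using both new edges has length at least $2g$, and any cycle using one new edge gives a short $x$--$a$ path in $G-ab$. The edge count increases, again a contradiction.
\end{enumerate}

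Hence $G$ is cubic with girth at least $g$.

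The main obstacle is step~(ii): checking that the switch $ab\mapsto xa,yb$ creates no short cycle. The cycle through both new edges passes through $x$, $a$, then a path in $G-ab$ to $b$, then $y$, then a path back to $x$. Its length is bounded below by $\operatorname{dist}_G(x,a)\ge g$ even though $\operatorname{dist}(x,y)$ may be small, so choosing $a$ and $b$ far from both deficient vertices suffices. The counting requirement is $u\ge K\cdot 3^{g}$ with $K$ large, which is what forces the girth to be only logarithmic. This gives girth at least $\log_3 u-O(1)\ge c\log u$ for all large even $u$.
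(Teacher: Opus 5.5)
The paper does not prove this lemma; it simply cites the randomized construction of Linial and Simkin. Your argument takes a different, self-contained route. It is essentially the classical Erd\H{o}s--Sachs extremal argument: take an edge-maximal graph with maximum degree at most $3$ and girth at least $g$, and use edge switches to show it must be cubic. It is correct in substance. It gives an elementary existence proof with girth $\log_2 u-O(1)$; your ball count really only needs $u\ge K\cdot 2^{g}$. Linial--Simkin instead give an efficient randomized construction with a near-optimal constant. The paper only needs some $c>0$ in Lemma~\ref{lem:graph-excess}, so either route suffices.

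One justification is wrong, though the conclusion survives.

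In (ii), a cycle through both $xa$ and $yb$ can close up in two ways.
\begin{itemize}
\item If it is $x\,a\cdots y\,b\cdots x$, the two segments are paths in $G-ab$ from $a$ to $y$ and from $b$ to $x$, and your bound $\ge 2g$ holds.
\item If it is $x\,a\cdots b\,y\cdots x$ (exactly the configuration in your last paragraph), the $a$--$b$ segment can stay far from $x$ and $y$. Then neither $\operatorname{dist}(a,\{x,y\})+\operatorname{dist}(b,\{x,y\})$ nor $\operatorname{dist}_G(x,a)$ bounds its length in general. The route from $a$ back to $x$ uses the new edge $yb$, so it is not a path in $G$. For instance, if $ab$ lies on a $g$-cycle in a component not containing $x$, then $\operatorname{dist}_G(x,a)=\infty$, yet a new cycle of length $g+2$ can appear.
\end{itemize}
The correct bound for the second configuration is as follows. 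The $a$--$b$ segment is a path in $G-ab$, so together with $ab$ it forms a cycle of $G$ and has length at least $g-1$. The new cycle therefore has length at least $(g-1)+2+1\ge g$.

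The same issue arises in (iii). The cycle $x\,a\cdots b\,x$ need not have length $2g$; it has length at least $(g-1)+2=g+1$, which is still enough.

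Finally, the claim $e(G)=\Omega(u)$ is most cleanly justified by comparing with the $u$-cycle. It is a competitor with maximum degree $2$ and girth $u\ge g$, so $e(G)\ge u$. With this, step (i) is not needed.
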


The base case of our supercritical argument is supplied by the following two results of Janzer, Sudakov and Tomon~\cite{JST} for graphs.

\begin{theorem}[Janzer, Sudakov and Tomon \cite{JST}]\label{thm:JST}
For every $s>2$, there is a constant $C(s)$ such that the following holds for all $d\ge s$. Let $G$ be an $n$-vertex graph with average degree at least $d$, where $d\le n^{(s-2)/s}$. Then there is a nonempty set $R\subseteq V(G)$ of size at most $nd^{-s/(s-2)}(\log d)^{C(s)}$ such that $G[R]$ has average degree at least $s$.
\end{theorem}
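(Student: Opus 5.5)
Since this theorem is quoted from \cite{JST}, I would prove it as follows. The plan is a \emph{density-increment} reduction to an almost regular host graph, followed by a \emph{probabilistic construction of tree-like sets} in that host graph. I track throughout the quantity $\Psi(G)=|V(G)|\,\avgd(G)^{-s/(s-2)}$. The statement asks for a subgraph on $\Psi(G)(\log d)^{C}$ vertices, and passing to a subgraph $G'$ with $\Psi(G')\le\Psi(G)$ only helps. The constraint $d\le n^{(s-2)/s}$ just says $\Psi(G)\ge 1$.

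\textbf{Regularization.} First I pass to a subgraph of minimum degree at least $d/2$. I then split vertices into $O(\log d)$ dyadic degree classes. If some class, or the bipartite graph between two classes, has a much larger ratio $\avgd^{s/(s-2)}/|V|$, I recurse into it. Each such step multiplies the current degree by a factor $\ge 2$, so there are only $O(\log d)$ steps. This lets us assume that $G$ is $K$-almost regular with $K=(\log d)^{O(1)}$: every degree lies in $[d/K,Kd]$. I also assume $G$ is ``locally sparse'': no set $S$ of size $\ell\le \Psi(G)$ spans more edges than a random-like graph would, i.e.\ $e(S)\lesssim \ell^2 d/n+\ell(\log d)^{O(1)}$. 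Otherwise $G[S]$ is again a denser, smaller host, and I recurse into it.

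\textbf{Construction.} Set $k=\Psi(G)$. I build $R$ as a union of $m$ depth-$h$ ``thin trees''. Each tree is grown from a random root by repeatedly taking a random subset of size $b$ of each current leaf's neighbourhood. A single tree on $k_0\approx b^h$ vertices has average degree just under $2$. The extra $\tfrac{s-2}{2}|R|$ edges must come from edges between different trees, or between leaves of the same tree. I would compute the expected number of such edges using the near-regularity. Leaves of a random tree behave like near-uniform samples after $h$ steps, because $G$ is almost regular. The local sparsity assumption controls the variance and rules out edges concentrating in a few pairs. Optimizing $b,h,m$ should give $|R|\approx n d^{-s/(s-2)}$ up to $(\log d)^{O(1)}$. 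The exponent $s/(s-2)$ arises from balancing $|R|$ against the required $(s-2)|R|/2$ extra edges, each occurring with probability about $d/n$ per leaf pair. A second-moment or deletion argument then produces an actual set $R$ with $e(G[R])\ge s|R|/2$.

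\textbf{Main obstacle.} I expect the counting step of the construction to be the hard part. The leaf distributions of the trees must be close enough to uniform, and pairs must be independent enough, to make the random-graph heuristic rigorous in an arbitrary almost-regular graph. If the leaves fail to spread, then some small set absorbs too many edges. I would try to convert that failure into a density increment, fed back into the regularization step. The polylogarithmic losses from the $O(\log d)$ rounds of regularization and from this dichotomy together give the $(\log d)^{C(s)}$ factor. I am least sure of this dichotomy step.
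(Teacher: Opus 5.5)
The paper does not prove this statement. It imports it from \cite{JST} as a black box, and the appendix only re-traces the proof of \cite[Theorem~2.13]{JST} to check that its constants are uniform in $s$. So your proposal has to stand as an independent proof. It has a genuine gap exactly at the step you flag, and that step fails already at the level of the heuristic.

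Take your near-uniformity assumption at face value. If the vertices of your union of thin trees behave like near-uniform samples in an almost-regular graph, then each pair that is not a tree edge is adjacent with probability about $d/n$. The expected number of edges beyond the forest is then about $|R|^2d/(2n)$, whatever $b$, $h$ and $m$ are. Balancing this against the required $(s-2)|R|/2$ gives $|R|\approx(s-2)n/d$, i.e.\ exponent $1$ rather than $s/(s-2)$. At your target size $|R|=nd^{-s/(s-2)}(\log d)^{C}$, the expected surplus is only of order $|R|\,d^{-2/(s-2)}(\log d)^{C}=o(|R|)$.

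Since $G$ is fixed and the only randomness is in your sampling, a mean that is polynomially too small leaves you needing to show that a rare event has positive probability. That is the original problem, and second-moment or deletion arguments cannot supply it.

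Your fallback dichotomy does not rescue this either. Take a random graph of average degree $d$ (say a random $d$-regular graph). There your local-sparsity hypothesis holds and random thin trees do have well-spread leaves, so you are in the ``good'' branch. Yet the construction produces essentially no surplus edges.

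The exponent $s/(s-2)$ genuinely comes from the first-moment count $\binom nv\binom{\binom v2}{sv/2}(d/n)^{sv/2}\approx\bigl[C d^{s/2}(v/n)^{(s-2)/2}\bigr]^v$. Its entropy factor records the freedom in \emph{which} pairs are edges. In such a random graph, the $v$-sets with $v\approx nd^{-s/(s-2)}$ that span $sv/2$ edges form a vanishing fraction of all $v$-sets. A procedure that outputs a typical random structure will therefore not find them. What is missing is a mechanism that forces the surplus edges, by counting or by an extremal choice, rather than hoping to sample them. That is the actual content of the argument in \cite{JST} on which the paper relies.

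(A lesser point: your claim that regularization takes only $O(\log d)$ rounds because each round doubles the degree is not justified. Passing to a degree class or a pair of classes can increase $\avgd^{s/(s-2)}/|V|$ while lowering the degree. This part can be handled by a standard almost-regular-subgraph lemma, but it is not where the difficulty lies.)
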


\begin{theorem}[Janzer, Sudakov and Tomon~\cite{JST}]\label{thm:JST-high}
For every $s>2$ and $\varepsilon >0$, there is a positive integer $t$ such that the following holds for all sufficiently large $n$. Let $G$ be an $n$-vertex graph of average degree $d \ge n^{1-2/s +\varepsilon}$. Then there is a nonempty set $R\subseteq V(G)$ of size at most $t$ such
that $G[R]$ has average degree at least $s$.
\end{theorem}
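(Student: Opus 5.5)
The plan is to reduce to finding a fixed bounded-size graph $F$ with $\avgd(F)\ge s$. Then I would show, by a Faudree--Simonovits type counting argument, that every graph with $n^{2-2/s+\varepsilon}$ edges contains $F$. The bound $t$ will simply be $|V(F)|$. The heuristic is the random-graph count: with $p=n^{-2/s+\varepsilon}$ and $F$ having $v$ vertices and $sv/2$ edges, the expected number of copies is $n^{v}p^{sv/2}=n^{\varepsilon sv/2}\to\infty$. So some $F$ whose density is only slightly above $s$, and which is "balanced" enough, should be forced.

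\emph{Choice of $F$.} I would use generalized theta graphs: two vertices $x,y$ joined by $k$ internally disjoint paths, $k_1$ of length $\ell$ and $k_2$ of length $\ell+1$, with $k=k_1+k_2$. Such a graph has $2+k_1(\ell-1)+k_2\ell$ vertices and $k_1\ell+k_2(\ell+1)$ edges. As $k\to\infty$, its average degree tends to $2\lambda/(\lambda-1)$, where $\lambda=\ell+k_2/k$ is the mean path length. I would choose $\ell$ and the ratio $k_2/k$ so that $\lambda$ lies slightly below $s/(s-2)$. This keeps the limiting average degree strictly above $s$, so a bounded $k$ already gives $\avgd(F)\ge s$. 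It also keeps $1/\lambda<1-2/s+\varepsilon/2$, which is exactly the room that $\varepsilon$ provides. Pure thetas with a single length $\ell$ would require an integer in an interval of length $O(\varepsilon)$, and that is why the two lengths are mixed.

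\emph{Finding $F$.} First, apply a standard regularization lemma (Erdős--Simonovits, or Jiang--Seiver) to pass to a $K$-almost-regular subgraph $G'$ on $m$ vertices. Its degree is $D\ge m^{1-2/s+\varepsilon/2}$, and $m\to\infty$ because $D\le m$ forces $m\ge D$. Next, count paths of length $\ell$ and of length $\ell+1$ in $G'$. Almost-regularity, together with the usual deletion of "bad" low-branching vertices (as in the proof that $\operatorname{ex}(n,\theta_{k,\ell})=O(n^{1+1/\ell})$), gives on the order of $mD^{\ell}$ paths of length $\ell$ and $mD^{\ell+1}$ paths of length $\ell+1$. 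Since $D^{\lambda}\gg m$, there are many pairs $(x,y)$, and for a typical such pair, pigeonholing gives $\Omega(D^{\ell}/m)$ paths of length $\ell$ and $\Omega(D^{\ell+1}/m)$ paths of length $\ell+1$ simultaneously. The exponent choice makes both quantities at least $m^{\Omega(\varepsilon)}$, which is unbounded. I would obtain a single pair that is rich for both lengths at once by averaging a weighted product of the two counts over pairs.

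\emph{Main obstacle: disjointness.} From such a pair we must extract $k_1$ paths of length $\ell$ and $k_2$ of length $\ell+1$ that are internally vertex-disjoint. I would do this greedily. Each chosen path has at most $\ell$ internal vertices, so it blocks only the paths through those vertices. Almost-regularity bounds the number of $(x,y)$-paths of a given length through a fixed vertex by $O(D^{\ell-1})$ times a factor that is $o(\text{total})$, after discarding pairs for which a single vertex carries a large fraction of the paths. This cleaning step is the delicate one: the classical argument handles it via a "no vertex is heavy" reduction. I would try to mimic the Faudree--Simonovits proof, running it for both lengths in parallel. Once the cleaning is done, the greedy choice succeeds, since we need only $k=O_{s,\varepsilon}(1)$ paths and each family has size $m^{\Omega(\varepsilon)}$. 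The set $R=V(F)$ then has bounded size $t$ and $\avgd(G[R])\ge\avgd(F)\ge s$.
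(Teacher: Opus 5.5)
The paper does not prove this theorem; it quotes it from Janzer--Sudakov--Tomon, so there is no internal argument to compare with. Judged on its own, your plan has a genuine gap, and it lies in the choice of $F$, not in the disjointness cleaning.

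\textbf{The gadget does not exist for $s\ge4$.} Here $\lambda<s/(s-2)\le2$ would need several $x$--$y$ paths of length $1$. In fact every theta graph in a simple graph has average degree below $4$.

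\textbf{The counting claim fails for $2<s<4$.} Put $L=s/(s-2)$. You mix lengths precisely when no integer lies in $[1/(1-2/s+\varepsilon/2),L)$. Then $\ell<1/(1-2/s+\varepsilon/2)$ and $\ell<L\le\ell+1$. When $D$ is of order $m^{1-2/s+\varepsilon/2}$, which is the case you must handle, this gives $D^{\ell}/m=m^{-\Omega_s(1)}$. So the claim that a typical pair carries $\Omega(D^\ell/m)\ge m^{\Omega(\varepsilon)}$ paths of length $\ell$ is false.

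\textbf{Worse, $F$ is not forced at this density at all.} Expanding $2e(F)\ge s|V(F)|$ gives
$k_1(L-\ell)-k_2(\ell+1-L)\ge 2L$.
So $F$ contains the theta graph $\theta_{k_1,\ell}$ formed by its short paths, with $k_1\ge 2L/(L-\ell)$. This subgraph has $(v-2)/(e-1)=k_1(\ell-1)/(k_1\ell-1)$, which is below $2/s$ as soon as $k_1>(L-1)/(L-\ell)$. Since $k_1\ge 2L/(L-\ell)$, it is below $2/s$ by a margin depending only on $s$. Hence, for small $\varepsilon$, the deletion method produces $n$-vertex graphs of average degree at least $n^{1-2/s+\varepsilon}$ with no $\theta_{k_1,\ell}$, and therefore no $F$. (Take $G(n,p)$ with $pn=2n^{1-2/s+\varepsilon}$ and delete one edge from every copy of $\theta_{k_1,\ell}$.) Concretely, for $s=3$ you need $k_1\ge6$, so $F\supseteq K_{2,6}$. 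But $C_4$-free graphs have average degree $\Theta(n^{1/2})\gg n^{1/3+\varepsilon}$ and contain no $K_{2,6}$. No weighting of pairs or greedy extraction can get around this.

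\textbf{The underlying issue is balance.} A fixed graph can be forced at this density only if none of its subgraphs has $(v-2)/(e-1)$ below $2/s-\varepsilon$. The short paths of your theta alone have $e/(v-2)=\ell/(\ell-1)$, which exceeds the value $\lambda/(\lambda-1)$ for all of $F$. Adding longer paths pulls $\avgd(F)$ down towards $s$, but it does not remove this denser piece. So mixing reproduces exactly the pure-theta obstruction you were trying to avoid.

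A fixed-template argument needs a template with no denser sub-configuration. For example, take unions of $p$ distinct root-preserving copies of a balanced rooted tree $T$ in the sense of Bukh and Conlon, with density $\rho_T$ satisfying $s/2<\rho_T<(2/s-\varepsilon)^{-1}$. Counting embeddings of $T$ in a subgraph of minimum degree at least $d/2$ gives a root tuple with $p$ distinct copies. Balancedness ensures that every such union has at least $\rho_T$ times as many edges as non-root vertices, however the copies overlap. So for large $p$ the union has average degree at least $s$, and no disjointness step is needed.
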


\section{Subcritical phase: \texorpdfstring{$1<s<c_r$}{1 < s < cr}}
First we consider the range $1<s<c_r$ and prove Theorem \ref{thm:subcritical}. Let $\Delta=r-(r-1)s>0$ and $f(k)=\frac{rk}{(r-1)k+1}$ for $k\ge1$. Note that $f$ is strictly increasing on $k$, and $f(k)\ge s$ if and only if $k\Delta\ge s$. Hence $M=\lceil s/\Delta\rceil$ is the smallest positive integer such that 
\begin{equation}\label{f(M)}
f(M-1)<s\le f(M).    
\end{equation}

Since $t_r(n,d,s)\le t_r(n,s,s)$ for every $d\ge s$, it suffices to prove the upper bound when $d=s$.
\begin{lemma}
$t_r(n,s,s)\le (r-1)M+1$ for every $n$.
\end{lemma}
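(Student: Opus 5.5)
We need to show that every $n$-vertex $r$-graph $H$ with $\avgd(H)\ge s$ contains a nonempty subhypergraph on at most $(r-1)M+1$ vertices with average degree at least $s$. The plan is an edge-ordering argument: we build a connected subhypergraph one edge at a time. Each new edge meets the current vertex set, so it adds at most $r-1$ new vertices. After $k$ edges we have at most $(r-1)k+1$ vertices, and hence average degree at least $rk/((r-1)k+1)=f(k)$. By \eqref{f(M)}, stopping at $k=M$ gives average degree at least $f(M)\ge s$ on at most $(r-1)M+1$ vertices. The only question is whether the growth process can always continue for $M$ steps.

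\textbf{Reduction to a component.} Since $\avgd(H)\ge s$, the ratio $e/|V|$ is at least $s/r$ for the whole hypergraph. Hence some connected component $K$ (in the sense of $B_H$, with isolated vertices counting as components with ratio $0$) satisfies $re(K)/|V(K)|\ge s$. Every edge lies in exactly one component, so such a $K$ exists and is nonempty. Replace $H$ by $K$.

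\textbf{Case $e(K)\ge M$.} Order the edges of $K$ as $e_1,e_2,\dots$ so that each $e_i$ with $i\ge2$ meets $e_1\cup\dots\cup e_{i-1}$; this is possible because $K$ is connected. Take $H'$ to be the first $M$ edges. Then $|V(H')|\le r+(r-1)(M-1)=(r-1)M+1$ and $\avgd(H')\ge f(M)\ge s$.

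\textbf{Case $e(K)=k<M$.} This is the main point requiring care. Connectivity gives $|V(K)|\le (r-1)k+1$, so $\avgd(K)\ge f(k)$. It also gives $|V(K)|\le (r-1)k+1\le (r-1)(M-1)+1<(r-1)M+1$. Therefore $K$ itself has at most $(r-1)M+1$ vertices and average degree at least $s$ by the choice of $K$, so $H'=K$ works. Consequently we do not need $f(k)\ge s$ here; the component's own density suffices.

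There is no genuine obstacle. The one subtlety is the choice of a dense component, which handles small components. Alternatively, one could observe that a dense component with fewer than $M$ edges cannot exist because $f(k)<s$ for $k<M$. Either way the bound $(r-1)M+1$ follows.
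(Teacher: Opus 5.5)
Your argument is correct and is essentially the paper's proof: pick a component of maximum average degree and order its edges so each meets the earlier ones. Then take either the whole component (if it has fewer than $M$ edges) or its first $M$ edges. Drop your closing ``alternative'' remark, however. Connectivity gives only $\avgd(K)\ge f(k)$, not $\avgd(K)\le f(k)$, and indeed for $r=3$, $s=5/4$ (so $M=3$), two edges sharing two vertices form a component with $k=2<M$ edges and average degree $3/2\ge s$.
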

\begin{proof}
Let $H$ be an $n$-vertex $r$-graph with
$\avgd(H)\ge s$. Choose a connected component $C$ of $H$ of maximum average degree. Then $\avgd(C)\ge\avgd(H)\ge s$.
Write $m=e(C)$. Since $C$ is connected, its edges can be ordered as $e_1,\ldots,e_m$ so that $e_i\cap(e_1\cup\cdots\cup e_{i-1})\ne\varnothing$
for every $i\ge2$.

If $m<M$, then we can take $F=C$, since $|V(C)|\le (r-1)m+1\le(r-1)M+1$. If $m\ge M$, the first $M$ edges $e_1,\ldots,e_M$ span at most $(r-1)M+1$ vertices and have average degree at least $f(M)\ge s$ by \eqref{f(M)}. Thus
$t_r(n,s,s)\le (r-1)M+1$.
\end{proof}

We next prove the matching lower bound when $d=s$. 
\begin{proposition}
\label{prop:subcritical-lower}
For all sufficiently large $n$,
$
t_r(n,s,s)\ge (r-1)M+1.
$
\end{proposition}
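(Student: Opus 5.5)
The plan is to construct, for every sufficiently large $n$, an $n$-vertex $r$-graph $H$ with $\avgd(H)\ge s$ in which every nonempty subhypergraph on at most $(r-1)M$ vertices has average degree less than $s$. This gives $t_r(n,s,s)>(r-1)M$, which is the claimed lower bound. The construction is a single long linear path, which has average degree close to $c_r>s$. Its small pieces are hyperforests and therefore have average degree at most $f(M-1)<s$.

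\emph{Construction.} Let $m=\lfloor (n-1)/(r-1)\rfloor$. Take a linear path $P$ with edges $e_1,\dots,e_m$, where consecutive edges share exactly one vertex and nonconsecutive edges are disjoint. Then $|V(P)|=(r-1)m+1$. Add $n-|V(P)|<r-1$ isolated vertices to obtain $H$ on $n$ vertices. Since $n\le (r-1)(m+1)$, we get
\[
\avgd(H)=\frac{rm}{n}\ge\frac{rm}{(r-1)(m+1)}.
\]
The right-hand side tends to $c_r$ as $n\to\infty$. Because $s<c_r$, it follows that $\avgd(H)\ge s$ for all sufficiently large $n$.

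\emph{Small subhypergraphs are sparse.} Let $H'\subseteq H$ be nonempty with $|V(H')|\le (r-1)M$. If $H'$ has no edges, its average degree is $0<s$. Otherwise, consider the components of the edge set of $H'$, where two edges are joined if they intersect. Each component is a set of consecutive edges of $P$, that is, a linear subpath with $k_i$ edges spanning exactly $(r-1)k_i+1$ vertices.

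Since the components are vertex-disjoint, $\sum_i\big((r-1)k_i+1\big)\le |V(H')|\le (r-1)M$. In particular each $k_i\le M-1$. Using $|V(H')|\ge\sum_i((r-1)k_i+1)$, the mediant inequality, and the monotonicity of $f$, we obtain
\[
\avgd(H')\le\frac{\sum_i rk_i}{\sum_i\big((r-1)k_i+1\big)}\le\max_i f(k_i)\le f(M-1)<s,
\]
where the last inequality is \eqref{f(M)}. When $M=1$ no edge fits, since an edge needs $r>(r-1)M$ vertices, so the bound holds trivially in that case.

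\emph{Main point.} The argument is essentially routine. The only step needing care is confirming that every edge-component of a subhypergraph of a linear path is itself a linear path, and so spans exactly $(r-1)k+1$ vertices. This holds because a set of edges of $P$ whose intersection graph is connected must consist of consecutive edges, and $P$ contains no cycles. The rest is the mediant inequality together with \eqref{f(M)}.
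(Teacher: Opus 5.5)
Your proof is correct and takes essentially the same approach as the paper: a linear path padded with isolated vertices, where any small subhypergraph is forced to have at most $M-1$ edges and hence average degree at most $f(M-1)<s$. The differences are cosmetic. The paper uses $m=\lceil sn/r\rceil$ edges rather than the longest possible path. It also obtains $|V(F)|\ge (r-1)e(F)+1$ directly by ordering the edges by index, since each edge meets the union of the earlier ones in at most one vertex, and this makes your component decomposition and mediant step unnecessary.
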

\begin{proof}
Let $m=\lceil sn/r\rceil$. Since $m\le sn/r+1$, we have $(r-1)m+1\le n-\frac{\Delta n}{r}+r\le n$ for all sufficiently large $n$. Choose distinct vertices $v_0,\ldots,v_m$ and pairwise disjoint sets $A_1,\ldots,A_m$, each of size $r-2$ and disjoint from $\{v_0,\ldots,v_m\}$. For $i\in[m]$, let $e_i=\{v_{i-1},v_i\}\cup A_i$. Add isolated vertices so that the resulting $r$-graph $H$ has $n$ vertices.
Then $\avgd(H)=rm/n\ge s$.
Let $F$ be a nonempty subhypergraph of $H$, and write $E(F)=\{e_{i_1},\ldots,e_{i_e}\}$, where $i_1<\cdots<i_e$. By construction, each $e_{i_j}$ with $j\ge2$ intersects $e_{i_1}\cup\cdots\cup e_{i_{j-1}}$ in at most one vertex.
Therefore
\[
|V(F)|\ge\left|e_{i_1}\cup\cdots\cup e_{i_e}\right|
\ge r+(e-1)(r-1)=(r-1)e+1.
\]
If $e=e(F)<M$, then $\avgd(F)=\frac{re}{|V(F)|}\le\frac{re}{(r-1)e+1}=f(e)\le f(M-1)<s$ by \eqref{f(M)}. Hence every subhypergraph $F\subseteq H$ with $\avgd(F)\ge s$ satisfies $e(F)\ge M$, and thus $|V(F)|\ge(r-1)e(F)+1\ge(r-1)M+1$. This proves that $t_r(n,s,s)\ge(r-1)M+1$.
\end{proof}

\section{Critical phase: \texorpdfstring{$s=c_r$}{s = cr}}
In this section, we consider the case when $s=c_r$ and prove Theorem \ref{thm:critical}.
The following lemma characterizes subhypergraphs of average degree at least $c_r$ in terms of cycles in $B_F$.

\begin{lemma}\label{lem:incidence-cycle}
A nonempty $r$-graph $F$ contains a subhypergraph of average degree at least $c_r$ if and only if $B_F$ contains a cycle.
\end{lemma}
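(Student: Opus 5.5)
The plan is to translate average degree into the excess $\xi_r(F')=(r-1)e(F')-|V(F')|$. For a nonempty subhypergraph $F'$, the condition $\avgd(F')\ge c_r$ reads $re(F')/|V(F')|\ge r/(r-1)$, which is exactly $\xi_r(F')\ge 0$. We may assume $V(F')$ is the union of the edges of $F'$: deleting isolated vertices only increases the average degree, and every subhypergraph has a trimmed version with the same edges. Next, compare excess with the incidence graph. The graph $B_{F'}$ has $|V(F')|+e(F')$ vertices and $re(F')$ edges. Hence its cyclomatic quantity satisfies
\[
e(B_{F'})-|V(B_{F'})| = (r-1)e(F')-|V(F')| = \xi_r(F').
\]
Both directions of the lemma then reduce to the elementary fact that a forest has $e-v=-\kappa<0$, where $\kappa$ is the number of components. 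Conversely, a graph with $e\ge v$ contains a cycle.

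For the direction ``subhypergraph of average degree at least $c_r$ $\Rightarrow$ cycle'', take $F'\subseteq F$ with $\avgd(F')\ge c_r$. After trimming isolated vertices, $B_{F'}$ has no isolated vertices and satisfies $e(B_{F'})-|V(B_{F'})|=\xi_r(F')\ge0$. A graph with at least as many edges as vertices is not a forest, so $B_{F'}$ contains a cycle. Since $B_{F'}$ is a subgraph of $B_F$, so does $B_F$.

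For the converse, suppose $B_F$ contains a cycle, say $v_1e_1v_2e_2\cdots v_ke_kv_1$ with $k\ge2$, distinct vertices $v_i$, and distinct edges $e_i$. Let $F'$ have edge set $\{e_1,\dots,e_k\}$ and vertex set $e_1\cup\dots\cup e_k$. I would bound $|V(F')|$ directly. The cycle gives $k$ vertices, and each $e_i$ contains $v_i$ and $v_{i+1}$, so it contributes at most $r-2$ further new vertices. Hence $|V(F')|\le k+k(r-2)=(r-1)k$. Therefore $\xi_r(F')\ge0$, and so $\avgd(F')\ge c_r$.

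There is no real obstacle here. The only point needing care is that a cycle in $B_F$ has length at least $4$, so $k\ge2$ and the edges $e_i$ are genuinely distinct; bipartiteness and simplicity guarantee this. It is also worth recording the bound $|V(F')|\le (r-1)k$, with $k$ equal to half the cycle length. This quantitative form is what will feed into the girth comparisons in the critical-phase bounds.
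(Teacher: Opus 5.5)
Your proof is correct and follows essentially the same route as the paper: the count $e(B_{F'})-|V(B_{F'})|=(r-1)e(F')-|V(F')|\ge 0$ forces a cycle, and conversely the $k$ hyperedges of a cycle of length $2k$ in $B_F$ span at most $(r-1)k$ vertices. The trimming of isolated vertices is harmless but unnecessary, since that count holds whether or not $F'$ has isolated vertices.
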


\begin{proof}
Suppose that $F'\subseteq F$ satisfies $\avgd(F')\ge c_r$. Then $|V(F')|\le(r-1)e(F')$. Since $B_{F'}$ has $|V(F')|+e(F')$ vertices and $re(F')$ edges, we have $e(B_{F'})\ge |V(B_{F'})|$. Thus $B_{F'}$, and hence $B_F$, contains a cycle.

Conversely, let $B_F$ contain a cycle of length $2\ell$. The cycle contains $\ell$ distinct hyperedges and $\ell$ distinct vertices of $F$. These hyperedges contain at most $(r-2)\ell$ other vertices, so together they span at most $(r-1)\ell$ vertices. Their average degree is therefore at least $r/(r-1)=c_r$.
\end{proof}
We divide the proof of Theorem \ref{thm:critical} into two cases.

\subsection{The case \texorpdfstring{$d=c_r$}{d = c(r)}}

Suppose first that $(r-1)\mid n$, and let $m=n/(r-1)$. Choose distinct vertices $x_1,\ldots,x_m$ and pairwise disjoint sets $A_1,\ldots,A_m$, each of size $r-2$ and disjoint from $\{x_1,\ldots,x_m\}$. With indices taken modulo $m$, define $e_i=\{x_i,x_{i+1}\}\cup A_i$ for $i\in[m]$, and let $H$ be a hypergraph with edges $e_1,\ldots,e_m$. Then $H$ has $(r-1)m=n$ vertices and $m$ edges, so $\avgd(H)=c_r$. By construction, $B_H$ has a unique cycle $x_1e_1x_2e_2\cdots x_me_mx_1$. 
\begin{claim}
Every subhypergraph of $H$ with average degree at least $c_r$ is $H$ itself.
\end{claim}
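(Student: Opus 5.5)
Let $F$ be a nonempty subhypergraph of $H$ with $\avgd(F)\ge c_r$. I will show that $F$ must contain every edge $e_i$, so $E(F)=E(H)$, and that $V(F)=V(H)$. The argument uses excess counting: $\avgd(F)\ge c_r$ is equivalent to $\xi_r(F)=(r-1)e(F)-|V(F)|\ge 0$.

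First, suppose $F$ misses some edge $e_j$. I will show that $\xi_r(F)<0$, which is a contradiction.

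Since $V(F)\supseteq\bigcup_{e\in E(F)}e$, it suffices to show that the union of the edges of $F$ has more than $(r-1)e(F)$ vertices. Removing $e_j$ from the cycle $x_1e_1\cdots x_me_m x_1$ leaves the edges $e_{j+1},\ldots,e_{j-1}$, taken cyclically. These form a linear path in which consecutive edges share exactly one vertex $x_i$ and nonconsecutive edges are disjoint. The sets $A_i$ are private to $e_i$.

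Hence any set of $k\ge1$ edges of $F$ decomposes into maximal runs of consecutive indices. A run of length $\ell$ spans exactly $(r-1)\ell+1$ vertices. Distinct runs are vertex-disjoint: two runs can only touch through a shared $x_i$, but then they would be consecutive along the path and would merge into one run. If there are $p\ge1$ runs, the union therefore has $(r-1)k+p>(r-1)k$ vertices. So $\avgd(F)<c_r$, a contradiction.

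Equivalently, in the incidence-graph language of Lemma~\ref{lem:incidence-cycle}, $B_F$ would be a forest. A forest has fewer edges than vertices, which forces $r e(F)<|V(F)|+e(F)$.

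Therefore $E(F)=E(H)$. Then $F$ has $m$ edges, so $\avgd(F)=rm/|V(F)|\ge c_r$ forces $|V(F)|\le (r-1)m=n$. Since the edges already cover all $n$ vertices of $H$, we get $V(F)=V(H)$, and hence $F=H$.

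The only delicate step is confirming that distinct runs of consecutive edges are disjoint. This holds because the sets $A_i$ are pairwise disjoint and disjoint from the $x$'s, and each $x_i$ lies only in $e_{i-1}$ and $e_i$. I expect no real obstacle here.

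This claim gives $t_r(n,c_r,c_r)\ge n$, and the trivial bound gives equality.
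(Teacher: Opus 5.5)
Your proof is correct and is essentially the paper's argument. The paper invokes Lemma~\ref{lem:incidence-cycle} to obtain a cycle in $B_F$, and then uses that the unique cycle of $B_H$ passes through every edge, which together cover all $n$ vertices. Your count of runs is an explicit verification of the same fact, namely that $B_F$ is a forest whenever some $e_j$ is omitted, and your final step ($E(F)=E(H)$ forces $V(F)=V(H)$) matches the paper's.
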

Let $H'\subseteq H$ satisfy $\avgd(H')\ge c_r$. By Lemma~\ref{lem:incidence-cycle}, $B_{H'}$ contains a cycle. Since $B_{H'}\subseteq B_H$ and $B_H$ has a unique cycle, the cycle in $B_H$ must be $x_1e_1x_2e_2\cdots x_me_mx_1$. These edges together cover all $n$ vertices of $H$, so $H'=H$ and $|V(H')|=n$. \hfill $\blacksquare$
\medskip

Hence $t_r(n,c_r,c_r)\ge n$. The reverse inequality is trivial, and therefore $t_r(n,c_r,c_r)=n$.
\medskip

For arbitrary $n$, write $n=(r-1)m+a$, where $0\le a\le r-2$. The case $a=0$ was proved above, so suppose that $a>0$. If $m<r-a$, then $n=O_r(1)$. Since every $r$-graph of average degree at least $c_r>1$ has at least $r+1$ vertices, we have $t_r(n,c_r,c_r)\ge r+1=\Omega_r(n)$. Together with the trivial upper bound $t_r(n,c_r,c_r)\le n$, the result follows.

We may therefore assume that $m\ge r-a$. Start with $H$ on $(r-1)m$ vertices, add $a$ new vertices, and add one more edge containing these $a$ vertices together with $r-a$ vertices chosen from $\{x_1,\ldots,x_m\}$. Choose these $r-a$ vertices as evenly as possible in the cyclic order $x_1,\ldots,x_m$, so that the cyclic distance between every two of them is at least $\lfloor m/(r-a)\rfloor$. The resulting $r$-graph has $m+1$ edges and $n$ vertices, and $\avgd(H)=r(m+1)/n\ge r/(r-1)=c_r$.

Let $C=x_1e_1x_2e_2\cdots x_me_mx_1$. Since the $a$ new vertices have degree one in $B_H$, every cycle in $B_H$ is either $C$ or consists of the vertex corresponding to the new edge together with a path on $C$ joining two of the selected vertices. Hence $g(B_H)\ge 2\lfloor m/(r-a)\rfloor+2=\Omega_r(m)=\Omega_r(n)$. If $H'\subseteq H$ satisfies $\avgd(H')\ge c_r$, then Lemma~\ref{lem:incidence-cycle} implies that $B_{H'}$ contains a cycle. Since $B_{H'}\subseteq B_H$ and half the vertices of every cycle in $B_{H'}$ belong to $V(H')$, we have $|V(H')|\ge g(B_H)/2=\Omega_r(n)$. Thus $t_r(n,c_r,c_r)=\Omega_r(n)$, and the trivial upper bound $t_r(n,c_r,c_r)\le n$ completes the case.

\subsection{The case \texorpdfstring{$d>c_r$}{d > c(r)}}

We first prove the upper bound. 
\begin{proposition}\label{prop:critical-upper}
Let $r\ge3$, $d>c_r$, and $\gamma=(r-1)d-r$. Then
$$
 t_r(n,d,c_r)\le r(r-1)\frac{\log(1+n\gamma)}{\log(1+\gamma)}.
$$
\end{proposition}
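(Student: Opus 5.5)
By Lemma~\ref{lem:incidence-cycle}, a subhypergraph of average degree at least $c_r$ exists exactly when $B_H$ contains a cycle. If $B_H$ has a cycle of length $2\ell$, the $\ell$ hyperedges on it span at most $(r-1)\ell$ vertices. It therefore suffices to show that the girth of $B_H$ is at most $2\ell$ with $(r-1)\ell\le r(r-1)\log(1+n\gamma)/\log(1+\gamma)$, i.e.\ $\ell\le r\log(1+n\gamma)/\log(1+\gamma)$.

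\textbf{Reduction to a core.} First I would pass to the $2$-core of the incidence graph. Vertices of $H$ of degree at most one do not lie on any cycle, so I repeatedly delete them, together with hyperedges that would lose a vertex. Rather than deleting such a hyperedge, it is cleaner to work directly in $B_H$: iteratively remove vertices of $B_H$ of degree at most $1$, which removes no cycle. Deleting a degree-one vertex $v\in V(H)$ removes one vertex and one edge of $B_H$; deleting a hyperedge node of degree at most one removes one node and at most one edge.

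The quantity $e(B)-|V(B)|$ never decreases under these deletions. Initially
\[
e(B_H)-|V(B_H)|=re(H)-n-e(H)=(r-1)e(H)-n=n\gamma/r,
\]
since $e(H)=nd/r$. So the $2$-core $B$ has excess $e(B)-|V(B)|\ge n\gamma/r>0$, and in particular it is nonempty.

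\textbf{Applying the bipartite Moore bound.} Let $B$ have parts $X$ (the $H$-vertices, with $|X|=N\le n$) and $Y$ (the hyperedges, with $|Y|=M$). Each $y\in Y$ has degree at most $r$, so $b=e(B)/M\le r$. Also $e(B)\ge N+M+n\gamma/r$, and the minimum degree is at least $2$. Now apply Lemma~\ref{lem:bipartite-moore} with girth $2\ell$. The sum $\sum_{i<\ell}(b-1)^{\lceil i/2\rceil}(a-1)^{\lfloor i/2\rfloor}$ grows like $((a-1)(b-1))^{(\ell-1)/2}$, and this is at most $N\le n$. I need a lower bound on $(a-1)(b-1)$ of the form $1+\Omega(\gamma/\mathrm{poly}(r))$, and here the normalization must be handled with care.

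Write $e=e(B)$. Then
\[
(a-1)(b-1)-1=\frac{(e-N)(e-M)-NM}{NM}=\frac{e(e-N-M)}{NM}\ge\frac{e\cdot n\gamma/r}{NM}.
\]
Using $e\ge 2N$ and $M\le n$ gives a lower bound of $2\gamma/r$; in addition, $e\ge 2M$ and $N\le n$ give the same kind of estimate. So $(a-1)(b-1)\ge 1+2\gamma/r$. A crude bound would then give
\[
n\ge(1+2\gamma/r)^{\lfloor(\ell-1)/2\rfloor},\qquad\text{so}\qquad \ell\lesssim \frac{2\log n}{\log(1+2\gamma/r)}+O(1).
\]
To obtain $\log(1+n\gamma)$ instead of $\log n$ in the numerator, I would sharpen the count. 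The Moore sum $\sum_j q^j$ with $q=(a-1)(b-1)$ is roughly $(q^{k}-1)/(q-1)$. So $n\ge (q^k-1)/(q-1)$, hence $q^k\le 1+n(q-1)\lesssim 1+rn\gamma$, which yields $k\le \log(1+rn\gamma)/\log(1+2\gamma/r)$. This is exactly the shape of $\Phi_n$, and it also covers the small-$\gamma$ regime where the answer is linear in $n$. Converting $\log(1+2\gamma/r)$ to $\log(1+\gamma)$ and $\log(1+rn\gamma)$ to $\log(1+n\gamma)$ costs factors of at most $r$ (using $\log(1+cx)\ge \min(c,1)\log(1+x)$ and the reverse), which are absorbed into the constant $r(r-1)$. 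The one remaining wrinkle is that the sum in Lemma~\ref{lem:bipartite-moore} alternates between factors $b-1$ and $a-1$ rather than being a clean geometric series in $q$. I would pair consecutive terms and use $b-1\ge1$, or else the symmetric bound for $M$, to reduce to the geometric sum.

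\textbf{Main obstacle.} The step I expect to be delicate is the constant bookkeeping: the constant in the statement must come out as exactly $r(r-1)$ and valid for all $n$ and $\gamma$, including tiny $\gamma$ where $\ell$ may be of order $n$. In that regime I would check separately that $\ell\le N+M$ trivially and that the claimed bound exceeds $n$, so it holds automatically.
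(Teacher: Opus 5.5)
Your route is the same as the paper's: pass to the $2$-core of $B_H$, keep excess $x=e-N-M\ge n\gamma/r$ (with $\ge$, since only $\avgd(H)\ge d$ is given), apply Hoory's bipartite Moore bound, and show $(a-1)(b-1)\ge 1+2\gamma/r$. Your identity $(a-1)(b-1)-1=e(e-N-M)/(NM)$, combined with $e\ge 2M$ and $N\le n$, is exactly the paper's estimate $(1+x/N)(1+x/M)\ge 1+2x/N$. Drop the other variant, though. The claim $M\le n$ is false in general, because $M$ counts the hyperedges surviving in the core; for instance it is $\binom nr$ when $H$ is complete.

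The step that fails is $q^k\le 1+n(q-1)\lesssim 1+rn\gamma$ with $q=(a-1)(b-1)$. Nothing bounds $q$ from above in terms of $\gamma$. Take a complete $r$-graph on $N_0$ vertices plus isolated vertices, with $n$ tuned so that $\gamma$ is tiny: then $q\approx(r-1)\binom{N_0-1}{r-1}$ is huge. So that inequality is unjustified.

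Even granting it, the numerator $\log(1+rn\gamma)$ costs a factor $r$ that the statement cannot absorb. The factor $r$ in $r(r-1)$ is already used up by $\ell\le 2(k+1)$ (a factor $2$) and by $\log(1+2\gamma/r)\ge(2/r)\log(1+\gamma)$ (a factor $r/2$). Your accounting therefore ends at $r^2(r-1)$, not $r(r-1)$.

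The fix, which is what the paper does, is to insert the lower bound $q_0:=1+2\gamma/r$ termwise before summing. Drop the odd-indexed terms, which are nonnegative, and bound each even-indexed term below by $q_0^j$. With $k=\lfloor(\ell-1)/2\rfloor$ this gives
\[
n\ge N\ge\sum_{j=0}^{k}q_0^{\,j}=\frac{q_0^{k+1}-1}{q_0-1},
\]
so $q_0^{k+1}\le 1+2n\gamma/r\le 1+n\gamma$. Hence
\[
\ell\le 2(k+1)\le \frac{2\log(1+n\gamma)}{\log(1+2\gamma/r)}\le \frac{r\log(1+n\gamma)}{\log(1+\gamma)},
\]
with no loss in the numerator. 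This works for every $\gamma>0$, so your separate treatment of very small $\gamma$ is unnecessary.
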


\begin{proof}
Let $H$ be an $n$-vertex $r$-graph with $m=e(H)$ and $\avgd(H)=rm/n\ge d$. Repeatedly delete any vertex of degree at most one in $B_H$, and let $C$ be the resulting graph. Let $X=V(C)\cap V(H)$ and $Y=V(C)\cap E(H)$, and write
$N=|X|$, $M=|Y|$, $E=e(C)$ and $x=E-N-M$. Then $x\ge e(B_H)-|V(B_H)|=(r-1)m-n \ge \frac{n((r-1)d-r)}{r}=\frac{n\gamma}{r}>0$ as $\gamma>0$.

Set $a=E/N$ and $b=E/M$. Since $\delta(C)\ge 2$, $E\ge2M$, and hence
$M\le N+x$. Moreover,
\begin{equation}\label{ab}
(a-1)(b-1)=\left(1+\frac{x}{N}\right)
            \left(1+\frac{x}{M}\right)
 \ge\left(1+\frac{x}{N}\right)
      \left(1+\frac{x}{N+x}\right)
=1+\frac{2x}{N}
\ge1+\frac{2\gamma}{r}.
\end{equation}

Suppose that $g(C)=2\ell$. Let $k=\lfloor(\ell-1)/2\rfloor$. By Lemma \ref{lem:bipartite-moore} and \eqref{ab},
$$
 n\ge N\ge\sum_{i=0}^{\ell-1}(b-1)^{\lceil i/2\rceil}
        (a-1)^{\lfloor i/2\rfloor}
  \ge\sum_{j=0}^{k}\left(1+\frac{2\gamma}{r}\right)^j = \frac{(1+\frac{2\gamma}{r})^{k+1}-1}{2\gamma/r}.
$$
It follows that
$$
 \ell\le2\frac{\log(1+2n\gamma/r)}
                  {\log(1+2\gamma/r)}
 \le r\frac{\log(1+n\gamma)}{\log(1+\gamma)},
$$
where the last inequality follows from $\log(1+2n\gamma/r)\le\log(1+n\gamma)$ and, by concavity of $\log(1+x)$, $\log(1+2\gamma/r)\ge(2/r)\log(1+\gamma)$.

Let $F$ be the subhypergraph formed by the $\ell$ hyperedges on this cycle. Each of these hyperedges contains two vertices of the cycle and at most $r-2$ other vertices, so $|V(F)|\le(r-1)\ell$. Hence
$\avgd(F)\ge r/(r-1)=c_r$, and therefore
\[
t_r(n,d,c_r)\le(r-1)\ell
\le r(r-1)\frac{\log(1+n\gamma)}{\log(1+\gamma)}.
\]
\end{proof}

We next prove the matching lower bound. We need two auxiliary constructions.
A \emph{theta graph} is a graph consisting of three internally vertex-disjoint paths with the same pair of distinct endpoints. It is \emph{balanced} if the lengths of these paths differ by at most one.

\begin{lemma}\label{lem:graph-excess}
There are absolute constants $a>0$ and $v_0$ such that, if $x\ge1$ and $v\ge\max\{2x,v_0\}$ are integers, then there is a simple graph $Q$ with $v$ vertices, $v+x$ edges, and $g(Q)\ge a(v/x)\log(1+x)$.
\end{lemma}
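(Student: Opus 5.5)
We build $Q$ by taking a cubic graph of logarithmic girth, subdividing its edges to reach the right number of vertices, and padding with a tree if needed. The model is the construction mentioned in the proof overview: Lemma~\ref{lem:cubic-girth} together with subdivision.

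\textbf{Step 1: choose the cubic core.} A cubic graph on $u$ vertices has $3u/2$ edges, so its excess (edges minus vertices) is $u/2$. Subdividing edges keeps this excess. We therefore take $u=2x$, which is even, and let $G_0$ be the cubic graph given by Lemma~\ref{lem:cubic-girth} on $2x$ vertices, with girth at least $c\log(2x)$. This needs $2x$ to be large. When $x$ is bounded by an absolute constant $x_0$, we instead take a fixed simple graph $Q_x$ of excess exactly $x$ (for example $K_4$ plus, if needed, disjoint copies of $K_4$, or a suitable small cubic graph). Subdividing its edges evenly to total $v$ vertices gives girth $\Omega(v/x)$, which is $\Omega((v/x)\log(1+x))$ since $x$ is bounded. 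So from now on assume $x\ge x_0$.

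\textbf{Step 2: subdivide evenly.} $G_0$ has $2x$ vertices and $3x$ edges. We need to add $v-2x\ge 0$ subdivision vertices. Distribute them as evenly as possible among the $3x$ edges, so each edge becomes a path of length $\lfloor (v-2x)/(3x)\rfloor+1$ or one more. The resulting graph $Q$ is simple, because each path has length at least $1$ and $G_0$ is simple. It has $v$ vertices and $3x+(v-2x)=v+x$ edges, as required.

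\textbf{Step 3: the girth bound.} Every cycle of $Q$ comes from a cycle of $G_0$ of length at least $c\log(2x)$, and each edge of that cycle becomes a path of length at least $\max\{1,\lfloor (v-2x)/(3x)\rfloor+1\}$. This quantity is $\ge \max\{1,(v-2x)/(3x)\}\ge v/(5x)$ whenever $v\ge 2x$: if $v\le 5x$ then $v/(5x)\le 1$, and otherwise $(v-2x)/(3x)\ge v/(5x)$. Hence
\[
g(Q)\ge c\log(2x)\cdot \frac{v}{5x}\ge a\,\frac{v}{x}\log(1+x)
\]
with $a=c/5$, using $\log(2x)\ge\log(1+x)$.

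\textbf{The main obstacle.} The only delicate point is the range of Lemma~\ref{lem:cubic-girth}, which applies only for sufficiently large even $u$. This is handled by the split at $x_0$: for small $x$ we use a fixed base graph, possibly shrinking $a$. The role of $v_0$ is to guarantee that, in this small-$x$ case, $v$ is large enough for the subdivided base graph to exist and for all edges to be subdivided roughly evenly.
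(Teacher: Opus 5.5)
Your proof is correct and follows essentially the same route as the paper. For $x\ge x_0$ you subdivide the cubic graph of Lemma~\ref{lem:cubic-girth} on $2x$ vertices evenly; your path-length bound $v/(5x)$ matches the paper's $\lfloor (v+x)/(3x)\rfloor\ge v/(6x)$. For bounded $x$ you subdivide a fixed base graph of excess $x$, where the paper uses a balanced theta graph for $x=1$ and $K_{2,x+2}$ for $2\le x<x_0$.

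One small fix: your listed examples do not cover $x=1$. Disjoint copies of $K_4$ give only even excess, and there is no simple cubic graph on $2$ vertices, so for $x=1$ use a theta graph such as $K_{2,3}$.
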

\begin{proof}
First suppose that $x=1$. Let $Q$ be a balanced theta graph on $v$ vertices and so $e(Q)=v+1$. Moreover, the union of the two shortest paths has length at least $2\lfloor (v+1)/3\rfloor$, and hence $g(Q)\ge 2\lfloor (v+1)/3\rfloor$. Thus the desired bound holds for $x=1$, provided that $v_0$ is sufficiently large and $a>0$ is sufficiently small.

Now suppose that $x\ge2$. Let $c_0>0$ be the constant from Lemma~\ref{lem:cubic-girth}, and choose $x_0\ge2$ sufficiently large so that the lemma applies to $2x$ for every $x\ge x_0$.
Let $a\le \min\{c_0/6,1/(2\log(1+x_0))\}$ be sufficiently small.

We first consider $x\ge x_0$. By Lemma~\ref{lem:cubic-girth}, there is a cubic graph $R$ on $2x$ vertices with $g(R)\ge c_0\log(2x)\ge c_0\log(1+x)$. Since $R$ has $3x$ edges, subdivide its edges as evenly as possible using exactly $v-2x$ new vertices. The resulting graph $Q$ has $v$ vertices and $e(Q)=3x+(v-2x)=v+x$. Each edge of $R$ is replaced by a path of length at least $L:=\lfloor (v+x)/(3x)\rfloor$. Since $v\ge2x$, we have $L\ge v/(6x)$. Therefore $g(Q)\ge Lg(R)\ge (c_0/6)(v/x)\log(1+x) \ge a(v/x)\log(1+x)$.

It remains to consider $2\le x<x_0$. Let $B=K_{2,x+2}$. Then $|V(B)|=x+4$, $e(B)=2x+4$, and hence $e(B)-|V(B)|=x$. Increase $v_0$, if necessary, so that $v_0\ge4x_0$. Subdivide the edges of $B$ as evenly as possible so that the resulting graph $Q$ has $v$ vertices. Since subdivision preserves $e-|V|$, we have $e(Q)=v+x$. Thus each edge of $B$ is replaced by a path of length at least $L':=\lfloor (v+x)/(2x+4)\rfloor \ge v/(8x)$. Since every cycle of $B$ has length at least $4$, we obtain $g(Q)\ge4L'\ge v/(2x)$. As $x<x_0$, we have $g(Q)\ge [1/(2\log(1+x_0))](v/x)\log(1+x)\ge a(v/x)\log(1+x)$.
\end{proof}

The second construction is based on $r$-graphs with large Berge girth. Following Berge~\cite{Berge}, a \emph{Berge cycle} of length $\ell\ge2$ in an
$r$-graph $H$ is a sequence $v_1,e_1,\ldots,v_\ell,e_\ell$ of distinct
vertices $v_1,\ldots,v_\ell$ and distinct edges $e_1,\ldots,e_\ell$ such
that $\{v_i,v_{i+1}\}\subseteq e_i$ for every $i\in[\ell]$, where indices
are taken modulo $\ell$. Equivalently, it is a cycle of length $2\ell$ in
$B_H$. The \emph{Berge girth} $\operatorname{bg}(H)$ of $H$ is the minimum length of a Berge cycle in $H$, with the convention that it is infinite if $H$ contains no Berge cycle. The following standard alteration argument will be used when the density is bounded away from $c_r$.

\begin{lemma}\label{lem:random-berge-girth}
For every fixed $r\ge3$, there is a constant $a_r>0$ such that, for all
sufficiently large $n$ and every $c_r\le d\le n^{1/2}$, there is an
$n$-vertex $r$-graph $H$ of average degree at least $d$ satisfying
\[
\operatorname{bg}(H)\ge
a_r\frac{\log(nd)}{\log(2+d)}.
\]
\end{lemma}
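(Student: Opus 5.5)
We use the standard alteration argument on a random $r$-graph. Let $\ell=\lfloor a_r\log(nd)/\log(2+d)\rfloor$, with $a_r>0$ small to be fixed. If $\ell\le 2$ the claim is trivial for the convention $\operatorname{bg}\ge 2$, since any $r$-graph of average degree $d$ on $n$ vertices works; note $d\le n^{1/2}$ so such graphs exist. So we may assume $\ell\ge3$. Take $H_0=H^{(r)}(n,p)$ with $p$ chosen so that $\E e(H_0)=p\binom nr=2nd/r$. We then delete one edge from each Berge cycle of length less than $\ell$. The result has Berge girth at least $\ell$, and it suffices to show that the expected number of deleted edges is at most $nd/r$. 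A concentration (or expectation-plus-conditioning) step then gives a deterministic $H$ with $e(H)\ge nd/r$, i.e.\ $\avgd(H)\ge d$.

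\textbf{Counting short Berge cycles.} A Berge cycle of length $k$ is specified by distinct vertices $v_1,\dots,v_k$ and distinct edges $e_i\supseteq\{v_i,v_{i+1}\}$. The number of such sequences is at most $n^k\cdot (n^{r-2})^k$, and each has its $k$ distinct edges present with probability $p^k$. Since $p n^{r-2}\asymp r!\cdot 2d/(rn)\cdot$const, we have $pn^{r-1}\le K_r d$ for a constant $K_r$. Hence the expected number of Berge cycles of length $k$ is at most $(K_r d)^k$. Summing over $2\le k<\ell$, the number of deleted edges has expectation at most $2(K_rd)^{\ell}$, using $K_rd\ge K_r c_r\ge 2$ after enlarging $K_r$ if necessary. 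We need $(K_rd)^{\ell}\le nd/(4r)$. Since $\log(K_r d)\le C_r\log(2+d)$, this holds provided $\ell\le \log(nd/4r)/(C_r\log(2+d))$. For large $n$ this follows from choosing $a_r\le 1/(2C_r)$, because $\log(nd)\ge\log n\to\infty$ absorbs the constant $4r$.

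\textbf{Finishing.} The edge count $e(H_0)$ is binomial with mean $2nd/r\ge 2n/(r-1)\to\infty$, so $e(H_0)\ge \tfrac{3}{2}nd/r$ with probability $1-o(1)$. By Markov, the number of short cycles is at most $\tfrac12 nd/r$ with probability at least $1/2$. So some outcome satisfies both, and deleting one edge per short cycle leaves at least $nd/r$ edges and Berge girth at least $\ell$, as required.

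\textbf{Main obstacle.} The delicate point is the bookkeeping in the small-$d$ regime, where $d$ is close to $c_r$. There $K_rd$ is bounded, the geometric sum is not dominated by its last term unless $K_rd\ge2$, and we need $p\le1$ and the edge count concentrated. These are handled by enlarging $K_r$ and by the factor $\log(2+d)$ in the bound, which stays bounded away from $0$. Large $d$ up to $n^{1/2}$ causes no issue, since $p\le 1$ holds comfortably.
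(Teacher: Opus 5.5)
Your proposal is correct and is essentially the paper's own alteration argument: a binomial random $r$-graph with $\Theta(nd)$ expected edges, at most $(K_rd)^k$ expected Berge cycles of length $k$, a length threshold of order $\log(nd)/\log(2+d)$, and one deleted edge per short cycle. The one difference is that you use concentration of $e(H_0)$ plus Markov, where the paper simply picks an outcome with $Y-Z\ge\E(Y-Z)>dn/r$.

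The slips are cosmetic and easily fixed. Deleting cycles shorter than $\ell=\lfloor a_r\log(nd)/\log(2+d)\rfloor$ gives $\operatorname{bg}(H)\ge\ell$ rather than the unrounded bound, so the case $\ell=2$ is not literally trivial; the paper absorbs this by running the argument with a constant $A_r$ and setting $a_r=A_r/2$. Your Markov step also loses a factor of $2$, which is harmless because $a_r\le 1/(2C_r)$ actually gives $(K_rd)^{\ell}\le (nd)^{1/2}=o(nd)$.
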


\begin{proof}
Let $\Delta=\binom{n-1}{r-1}$. Let $G$ be the binomial random $r$-graph on $n$ vertices with edge probability $p=4d/\Delta$ and let $Y=e(G)$. For sufficiently large $n$, we have
$p\le1$. Let $X_\ell$ denote the number of Berge cycles of length $\ell$.
Then
\[
\mathbb E X_\ell
\le n^\ell\binom{n-2}{r-2}^{\ell}p^\ell
=\left(4d\frac{n(r-1)}{n-1}\right)^\ell
\le(K_rd)^\ell
\]
for some constant $K_r$. 

Choose a constant $A_r>0$ sufficiently small and let
$L=\max\left\{2,\left\lfloor A_r\frac{\log(nd)}{\log(2+d)} \right\rfloor\right\}$ and $Z=\sum_{2\le\ell<L}X_\ell$.
Since $\log(K_rd)\le C_r\log(2+d)$ for some constant $C_r$, we may choose $A_r$ so that $A_rC_r<1/2$. If $L>2$, then
\[
\mathbb E Z
\le L(K_rd)^L
\le O_r((\log n)(nd)^{A_rC_r})
=o(dn),
\]
while $\mathbb E Z=0$ if $L=2$.

Note that $\mathbb E Y=\binom{n}{r}p=4dn/r$, we have $\mathbb E(Y-Z)=\frac{4dn}{r}-o(dn)>\frac{dn}{r}$. Thus some realization satisfies $Y-Z\ge dn/r$. Choose one edge from each Berge cycle of length less than $L$ and delete all chosen edges. The resulting $r$-graph $H$ has at least $dn/r$ edges and Berge girth at least $L$. Hence $\avgd(H)\ge d$, and
\[
\operatorname{bg}(H)\ge L
\ge\frac{A_r}{2}\frac{\log(nd)}{\log(2+d)}.
\]
Taking $a_r=A_r/2$ completes the proof.
\end{proof}

We are now ready to prove the lower bound.

\begin{proposition}\label{prop:critical-lower}
For every $r\ge3$, there is a constant $b_r>0$ such that, for every $n$ and every $d>c_r$, we have
$$
t_r(n,d,c_r)\ge
b_r\frac{\log(1+n\gamma)}{\log(1+\gamma)},
$$
where $\gamma=(r-1)d-r$.
\end{proposition}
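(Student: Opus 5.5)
Throughout, $F\subseteq H$ with $\avgd(F)\ge c_r$ forces, by Lemma~\ref{lem:incidence-cycle}, a cycle in $B_F\subseteq B_H$. Half the vertices of that cycle lie in $V(F)$, so $|V(F)|\ge \operatorname{bg}(H)$. It therefore suffices to construct, for each $n$ and $d>c_r$, an $n$-vertex $r$-graph $H$ with $\avgd(H)\ge d$ and $\operatorname{bg}(H)\ge b_r\Phi_n(\gamma)$. We split into three regimes: $\gamma$ bounded away from zero, $\gamma$ small but $n\gamma$ large, and $n\gamma=O(1)$.

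\emph{Far from the threshold.} Suppose $\gamma\ge\gamma_0$ for a small constant $\gamma_0$. Then $d\ge c_r+\gamma_0/(r-1)$ and $\Phi_n(\gamma)=\Theta(\log(1+n\gamma)/\log(1+\gamma))=\Theta(\log(nd)/\log(2+d))$.
\begin{itemize}
\item For $d\le n^{1/2}$ and $n$ large, we apply Lemma~\ref{lem:random-berge-girth} directly.
\item For $d>n^{1/2}$, the target $\log(nd)/\log(2+d)$ is $O(1)$. Any $r$-graph with $\avgd\ge c_r$ has at least $r+1$ vertices, so $t_r\ge r+1$ gives the bound with a suitable constant. (The complete $r$-graph, or any $H$ with the required density, works when $d\le\binom{n-1}{r-1}$; otherwise the statement is vacuous.)
\item Small $n$ is absorbed into the constant $b_r$ in the same way.
\end{itemize}

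\emph{Near the threshold: graph lifting.} Suppose $\gamma<\gamma_0$ and $n\gamma$ is large. We build a graph $Q$ via Lemma~\ref{lem:graph-excess} and turn each graph edge into a hyperedge by adding $r-2$ private new vertices, as in the cycle construction for $d=c_r$. If $Q$ has $v$ vertices and $v+x$ edges, the resulting $H$ has $m=v+x$ edges and $N=v+(r-2)(v+x)$ vertices, so $(r-1)m-N=x$. Berge cycles of $H$ correspond exactly to cycles of $Q$, because the private vertices have degree one in $B_H$; hence $\operatorname{bg}(H)=g(Q)$. We need $rm/n\ge d$, i.e.\ excess $(r-1)m-n\ge n\gamma/r$, so we take $x=\lceil n\gamma/r\rceil$.
\begin{itemize}
\item Choose $v$ so that $v+(r-2)(v+x)\le n$, e.g.\ $v\approx (n-(r-2)x)/(r-1)=\Theta(n)$, and pad with isolated vertices. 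Padding does not decrease the average-degree requirement, since we computed against $n$.
\item Rounding must be handled: we need $(r-1)m-n\ge n\gamma/r$ exactly, so bump $x$ by $O(r)$ to absorb the slack from isolated vertices.
\item Since $\gamma<\gamma_0$, we have $v\ge 2x$, and Lemma~\ref{lem:graph-excess} gives $g(Q)\ge a(v/x)\log(1+x)=\Omega(\log(1+n\gamma)/\gamma)$. This is $\Theta(\Phi_n(\gamma))$ because $\log(1+\gamma)=\Theta(\gamma)$ here.
\end{itemize}

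\emph{Tiny $n\gamma$.} When $n\gamma\le K$ for a constant $K$, the target is $\Theta(n)$. We again take $x$ a bounded constant, which is covered by the same lemma (including $x=1$, the theta graph), giving girth $\Omega(n)$.

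The main obstacle is the bookkeeping in the lifting step. We must ensure $v\ge\max\{2x,v_0\}$ and that the vertex count fits within $n$ while keeping excess at least $n\gamma/r$. We also need the three regimes to overlap with uniform constants, especially at the seam $\gamma\approx\gamma_0$. There we may need $d\le n^{1/2}$ for Lemma~\ref{lem:random-berge-girth}, which holds since $\gamma_0$ is constant and $n$ is large; otherwise small cases are absorbed into $b_r$.
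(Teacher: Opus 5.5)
Your proposal is correct and follows the paper's proof essentially step for step. Small $n$ and $d>n^{1/2}$ are absorbed into the trivial bound $t_r\ge r+1$; when $\gamma$ is small you lift a graph from Lemma~\ref{lem:graph-excess} by adding $r-2$ private vertices to each edge; and when $\gamma\ge\gamma_0$ you apply Lemma~\ref{lem:random-berge-girth}. The only difference is bookkeeping: the paper chooses $x\equiv -n \pmod{r-1}$ so that the lifted hypergraph has exactly $n$ vertices, whereas you pad with isolated vertices and enlarge $x$ by $O(r)$, which works equally well.
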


\begin{proof}
Since $1+n\gamma\le(1+\gamma)^n$, we have
$\log(1+n\gamma)/\log(1+\gamma)\le n$. Moreover, $t_r(n,d,c_r)\ge r+1$ since $d>c_r$. Thus, by decreasing $b_r$ if necessary, we may assume that $n\ge n_0(r)$ for some sufficiently large constant $n_0(r)$.

We first consider the case $d>n^{1/2}$. In this case $n<d^2$ and $\gamma=\Theta_r(d)$, so
$
\frac{\log(1+n\gamma)}{\log(1+\gamma)}=\Theta_r\left(\frac{\log(1+nd)}{\log(1+d)}\right)=O_r(1).
$
The required bound follows from $t_r(n,d,c_r)\ge r+1$ by decreasing
$b_r$. 

Hence we may assume that $d\le n^{1/2}$. Fix a sufficiently small constant $\gamma_0=\gamma_0(r)>0$. Suppose first that $0<\gamma\le\gamma_0$. 
Choose an integer $x$ such that
$x\equiv -n\pmod{r-1}$ and
$\frac{n\gamma}{r}\le x<\frac{n\gamma}{r}+r-1$.
Let $m=\frac{n+x}{r-1}$ and $v=m-x=\frac{n-(r-2)x}{r-1}$. For sufficiently small $\gamma_0$, we have $v\ge2x$ and $v=\Theta_r(n)$. By Lemma~\ref{lem:graph-excess}, there is a graph $Q$ with $v$ vertices, $v+x=m$ edges, and $g(Q)\ge a\frac{v}{x}\log(1+x)$.
For each edge $uv\in E(Q)$, let $A_{uv}$ be a set of $r-2$ new vertices, where these sets are pairwise disjoint, and define $E(H)=\bigl\{\{u,v\}\cup A_{uv}:uv\in E(Q)\bigr\}$. The resulting $r$-graph $H$ has $m$ edges and $|V(H)|=v+(r-2)m=n$. Since $(r-1)m-n=x$, we have $(r-1)\avgd(H)-r=\frac{rx}{n}\ge\gamma$, and hence $\avgd(H)\ge d$. Every vertex of $V(H)\setminus V(Q)$ has degree one in $B_H$ and hence lies on no cycle. Therefore every cycle in $B_H$ corresponds to a cycle in $Q$. If $F\subseteq H$ has average degree at least $c_r$, then Lemma~\ref{lem:incidence-cycle} gives a cycle in $B_F$. Its vertices on the $V(H)$-side form a cycle in $Q$, so $|V(F)|\ge g(Q)$. Since $x=\Theta_r(1+n\gamma)$, $v=\Theta_r(n)$, and $\log(1+\gamma)=\Theta(\gamma)$ in this range, we obtain
$
|V(F)|\ge  g(Q)\ge b_r\frac{\log(1+n\gamma)}{\log(1+\gamma)}.
$

It remains to consider $\gamma\ge\gamma_0$. Since $d\le n^{1/2}$, Lemma~\ref{lem:random-berge-girth} gives an $n$-vertex $r$-graph $H$ of average degree at least $d$ satisfying
$
\operatorname{bg}(H)\ge a_r\frac{\log(nd)}{\log(2+d)}=\Omega_r\left(\frac{\log(1+n\gamma)}{\log(1+\gamma)}\right),
$
where we used $1+\gamma=\Theta_r(1+d)$ and $1+n\gamma=\Theta_r(1+nd)$. Every subhypergraph of $H$ of average degree at least $c_r$ contains a Berge cycle and therefore has at least $\operatorname{bg}(H)$ vertices. This proves the proposition.
\end{proof}

Combining the upper and lower bounds in Propositions~\ref{prop:critical-upper} and~\ref{prop:critical-lower} with the case $d=c_r$ above proves Theorem~\ref{thm:critical}.

\section{Supercritical phase: \texorpdfstring{$s>c_r$}{s > cr}}
\subsection{Lower bound via alteration}
We begin with the proof of Theorem~\ref{thm:lower}. The proof is an alteration argument. Recall that $\alpha_{r,s}=s/((r-1)s-r)$, $w=nd^{-\alpha_{r,s}}$ and $\Lambda=\log(2+nd)$.

\begin{proof}[Proof of Theorem~\ref{thm:lower}] 
Let $K=\left\lfloor c\left(w+\frac{\Lambda}{\log(2+\Lambda/w)}\right)\right\rfloor$, where $c=c(r,s)>0$ will be chosen sufficiently small. Let $G$ be the binomial random $r$-graph on $n$ vertices obtained by including each edge independently with probability $p=2d/\binom{n-1}{r-1}$. Since $d\le n^{(r-1)-r/s}$, we have $p=o(1)$. Moreover $\mathbb E e(G)=2dn/r$. 

For each integer $v\ge r$, let $m_v=\lceil sv/r\rceil$, and let $Z_v$ be the number of pairs $(X,\mathcal F)$ such that $X\subseteq V(G)$, $|X|=v$, $\mathcal F\subseteq\binom Xr$, $|\mathcal F|=m_v$, and $\mathcal F\subseteq E(G)$. If $m_v>\binom vr$, then $Z_v=0$. 
If $m_v\le\binom vr$, then, using $\binom nv\le(en/v)^v$, $\binom vr\le v^r/r!$ and $\binom{n-1}{r-1}\ge(n/2)^{r-1}/(r-1)!$, we obtain 
\begin{align} 
\mathbb EZ_v &=\binom nv\binom{\binom vr}{m_v}\left(\frac{2d}{\binom{n-1}{r-1}}\right)^{m_v}\notag\\ &\le\left(\frac{en}{v}\right)^v\left(\frac{e\binom vr}{m_v}\frac{2d}{\binom{n-1}{r-1}}\right)^{m_v}\notag\\ &\le\left(\frac{en}{v}\right)^v\left(\frac{e(v^r/r!)}{sv/r}\frac{2d}{(n/2)^{r-1}/(r-1)!}\right)^{m_v}\notag\\ &=\left(\frac{en}{v}\right)^v\left(\frac{2^re}{s}d\left(\frac vn\right)^{r-1}\right)^{m_v}\notag\\ &\le\left(\frac{en}{v}\right)^v\left(C_{r,s}d\left(\frac vn\right)^{r-1}\right)^{m_v}, \label{eq:configuration-first} 
\end{align}
where $C_{r,s}\ge\frac{2^re}{s}$.
We prove the following claim. 
\medskip
\begin{claim}\label{c1}
If $c=c(r,s)>0$ is sufficiently small and $n$ is sufficiently large, then \[ C_{r,s}d\left(\frac vn\right)^{r-1}\le1 \] for every $r\le v\le K$. 
\end{claim}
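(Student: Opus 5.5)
Since the left-hand side is increasing in $v$, it suffices to prove the claim at $v=K$, that is, to show $K\le (C_{r,s}d)^{-1/(r-1)}n$. We may assume $K\ge r$, since otherwise the range is empty. Write $\beta=1/(r-1)$. The key identity is $w=nd^{-\alpha_{r,s}}$ with $\alpha_{r,s}=s/((r-1)s-r)>\beta$, and $d\le n^{1/\alpha_{r,s}}$ means $w\ge1$. We will bound each of the two terms in the definition of $K$ separately by $\tfrac12 C_{r,s}^{-\beta}nd^{-\beta}$. Since $K\le c\,w+c\,\Lambda/\log(2+\Lambda/w)$, this suffices once $c$ is small.

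For the first term, $c\,w=c\,nd^{-\alpha_{r,s}}\le c\,nd^{-\beta}$, because $d\ge s>1$ and $\alpha_{r,s}>\beta$. Taking $c\le\tfrac12C_{r,s}^{-\beta}$ handles it.

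For the second term we need
\[
c\,\frac{\Lambda}{\log(2+\Lambda/w)}\le \tfrac12 C_{r,s}^{-\beta}nd^{-\beta}.
\]
We split according to the size of $d$.

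If $d\le n^{\theta}$ for a small fixed $\theta>0$, then $nd^{-\beta}\ge n^{1-\theta\beta}$, which is polynomially large. Meanwhile $\Lambda=O(\log n)$ because $d\le n^{1/\alpha_{r,s}}\le n^{r}$, and $\log(2+\Lambda/w)\ge\log2$. So the inequality holds for large $n$.

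In the complementary range $d\ge n^{\theta}$, we use $\Lambda/w$ to produce the needed denominator. Here $nd^{-\beta}\ge d^{1/\alpha_{r,s}-\beta}=d^{\,r-1-r/s-\beta}$, using $n\ge d^{1/\alpha_{r,s}}$. The exponent $r-1-r/s-1/(r-1)$ could be negative, for instance when $s$ is close to $c_r$, so this crude bound is not enough. Instead we rewrite the target via $w$:
\[
nd^{-\beta}=w\,d^{\alpha_{r,s}-\beta}.
\]
It therefore suffices to show $\Lambda/\log(2+\Lambda/w)\le c' w\,d^{\alpha_{r,s}-\beta}$. Set $y=\Lambda/w\ge0$. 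The condition becomes $y/\log(2+y)\le c' d^{\alpha_{r,s}-\beta}$. Now $y\le\Lambda=O(\log n)=O_\theta(\log d)$ in this range, while $d^{\alpha_{r,s}-\beta}\ge n^{\theta(\alpha_{r,s}-\beta)}$ grows polynomially. So the inequality holds for large $n$, and in fact this argument covers every $d$, not only $d\ge n^\theta$.

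On reflection, this rewriting makes the case split unnecessary. The uniform argument is:
\[
K\le 2c\max\Bigl(w,\ \frac{\Lambda}{\log 2}\Bigr),\qquad w\le nd^{-\beta}\,d^{-(\alpha_{r,s}-\beta)},\qquad \Lambda=O_r(\log n)\le nd^{-\beta}\cdot o(1),
\]
where the last step uses $nd^{-\beta}\ge nd^{-\alpha_{r,s}}\cdot d^{\alpha_{r,s}-\beta}$ together with $d\le n^{r}$. The main point to verify is that $nd^{-\beta}\gg\log n$ uniformly over $s\le d\le n^{1/\alpha_{r,s}}$. This follows from
\[
nd^{-\beta}\ge n^{1-\beta/\alpha_{r,s}},
\qquad
1-\beta/\alpha_{r,s}=1-\frac{(r-1)s-r}{(r-1)s}=\frac{r}{(r-1)s}>0,
\]
so $nd^{-\beta}$ is a positive power of $n$, and $\Lambda\le\log(2+n^{r})$ is negligible against it.

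Combining the pieces: $K\le c\,w+c\Lambda/\log2\le c\,nd^{-\beta}+O(\log n)$. This is at most $C_{r,s}^{-\beta}nd^{-\beta}$ once $c\le\tfrac12C_{r,s}^{-\beta}$ and $n$ is large, which gives $C_{r,s}d(v/n)^{r-1}\le1$ for all $r\le v\le K$.
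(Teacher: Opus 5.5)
Your final uniform argument is correct and is essentially the paper's proof. The paper splits into the cases $w\ge\Lambda$ (so $K=O(cw)$ and $d(w/n)^{r-1}=d^{-r/((r-1)s-r)}\le 1$) and $w<\Lambda$ (so $K=O(c\Lambda)$ and $d(\Lambda/n)^{r-1}\le\Lambda^{r-1}n^{-r/s}=o(1)$). These are exactly your two facts, $w\le nd^{-1/(r-1)}$ and $nd^{-1/(r-1)}\ge n^{r/((r-1)s)}\gg\Lambda$; you replace the case split by bounding the two summands of $K$ separately, and the earlier exploratory analysis with $d\le n^{\theta}$ (including the remark that it ``covers every $d$'') can simply be dropped.
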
 
Suppose first that $w\ge\Lambda$. Then $\Lambda/\log(2+\Lambda/w)=O(w)$, so $K=O(cw)$. Consequently, for every $v\le K$, there is a constant $C'_{r,s}>0$ such that $C_{r,s}d\left(\frac vn\right)^{r-1}\le C_{r,s}d\left(\frac Kn\right)^{r-1}\le C'_{r,s}c^{r-1}d\left(\frac wn\right)^{r-1}=C'_{r,s}c^{r-1}d^{-r/((r-1)s-r)}<1$ when $c$ is sufficiently small. Now suppose that $w<\Lambda$. Since $\Lambda/w>1$, we have $w \le \frac{2\Lambda}{\log(2+\Lambda/w)}$, and hence $K=O(c\Lambda)$. As $d\le n^{(r-1)-r/s}$, there is a constant $C'_{r,s}>0$ such that \[ C_{r,s}d\left(\frac vn\right)^{r-1}\le C_{r,s}d\left(\frac Kn\right)^{r-1}\le C'_{r,s}d\left(\frac{\Lambda}{n}\right)^{r-1}\le C'_{r,s}\Lambda^{r-1}n^{-r/s}=o(1). \] This proves the claim. 
\hfill $\blacksquare$ 

\medskip 

By Claim~\ref{c1} and \eqref{eq:configuration-first}, since
$m_v\ge sv/r$, we have, for every $r\le v\le K$,
\begin{align}
\mathbb EZ_v
&\le
\left(\frac{en}{v}\right)^v
\left(C_{r,s}d\left(\frac vn\right)^{r-1}\right)^{sv/r}
\notag\\
&=
\left[
C'_{r,s}d^{s/r}
\left(\frac vn\right)^{((r-1)s-r)/r}
\right]^v
\notag\\
&=
\left[
C'_{r,s}
\left(\frac vw\right)^{((r-1)s-r)/r}
\right]^v,
\label{eq:configuration-count}
\end{align}
where $C'_{r,s}:=eC_{r,s}^{s/r}$.

Let $Z=\sum_{r\le v\le K}Z_v$. If $w\ge\Lambda$, then $K=O(cw)$, so \eqref{eq:configuration-count} gives $\mathbb EZ_v\le\left[C'_{r,s}\left(\frac Kw\right)^{((r-1)s-r)/r}\right]^v\le2^{-v}$ when $c$ is sufficiently small. Hence $\mathbb EZ=O_{r,s}(1)=o(nd)$. Now suppose that $w<\Lambda$. 
Since $K=O\left(c\Lambda/\log(2+\Lambda/w)\right)$, we have $K/w=O(\Lambda/w)$, and hence $\log(2+K/w)=O(\log(2+\Lambda/w))$.
Thus, for some constants $C''_{r,s},C'''_{r,s}>0$ and every $v\le K$,
\[
\begin{aligned}
\log \mathbb EZ_v
&\le v\left(\log C'_{r,s}
+\frac{(r-1)s-r}{r}\log\frac vw\right)\\
&\le C''_{r,s}v\log\left(2+\frac Kw\right)\\
&\le C''_{r,s}K\log\left(2+\frac Kw\right)\\
&\le C'''_{r,s}c\Lambda.
\end{aligned}
\]
Since $c$ is sufficiently small, we obtain $\mathbb EZ\le K\exp(\Lambda/4)\le C_{r,s}\Lambda(2+nd)^{1/4}=o(nd)$. Therefore $\mathbb EZ=o(nd)$ in both cases, and hence \[ \mathbb E\bigl(e(G)-Z\bigr)=\frac{2dn}{r}-o(nd)>\frac{dn}{r}. \] Fix a realization for which $e(G)-Z>dn/r$. For every configuration counted by $Z$, select one of its edges, and let $H$ be obtained by deleting all selected edges. Then $e(H)\ge e(G)-Z>dn/r$, so $\avgd(H)\ge d$. Moreover, $H$ contains no subhypergraph of average degree at least $s$ on $v\le K$ vertices. Consequently, \[ t_r(n,d,s)\ge K+1>c\left(w+\frac{\Lambda}{\log(2+\Lambda/w)}\right), \] as required.
\end{proof}


\subsection{Upper bound via uniformity reduction}
The following dichotomy is the main reduction used in the proof of Theorem~\ref{thm:main}. We distinguish the $(r-1)$-sets of large codegree. If many edges contain such a set, Hall's theorem gives a small dense subhypergraph. Otherwise, after deleting these edges, all $(r-1)$-sets have bounded codegree, and sampling a small set of vertices produces a dense $(r-1)$-graph. The sampling step is related to the random-marking argument of Feige and Wagner~\cite{FW}.

\begin{lemma}\label{lem:compression} 
Fix $r\ge3$ and $\sigma>1$. There are constants $c,C,d_0>0$, depending only on $r$ and $\sigma$, such that the following holds. Let $H$ be an $n$-vertex $r$-graph of average degree at least $d\ge d_0$, and let $0<p\le1/(16r)$ satisfy $pn\ge r$. Then one of the following holds. 
\begin{enumerate}[label=\textnormal{(\alph*)}] 
\item $H$ contains a subhypergraph of average degree at least $\sigma$ and order at most $Cpn$. 
\item There exist a set $X\subseteq V(H)$ with $|X|=\lceil pn\rceil$ and an $(r-1)$-graph $G$ on $V(H)\setminus X$ with $\avgd(G)\ge cpd$ such that, for every $P\in E(G)$, there exists $x_P\in X$ with $P\cup\{x_P\}\in E(H)$.
\end{enumerate} 
\end{lemma}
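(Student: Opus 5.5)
Fix a threshold $D=D(r,\sigma)$ and call an $(r-1)$-set $S$ \emph{heavy} if $d_H(S)\ge D$. The argument splits according to how many edges of $H$ contain a heavy $(r-1)$-set.

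\textbf{Many edges contain heavy sets.} Suppose at least half of the edges contain a heavy set. Take a heavy $S$ together with $k$ edges $S\cup\{y_1\},\dots,S\cup\{y_k\}$. These edges span $r-1+k$ vertices and have average degree $rk/(r-1+k)$. This exceeds $\sigma$ once $k\ge k_0(r,\sigma)$, but only when $\sigma<r$. For larger $\sigma$ a single sunflower is not enough, and we need a $K^{(r)}$-type configuration.

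The more robust route uses Hall's theorem, as the paper hints. Call a vertex set $W$ \emph{$\sigma$-dense} if it spans at least $\sigma|W|/r$ edges. Consider the bipartite graph between heavy sets and the edges containing them. Suppose some small family of heavy sets, together with the edges they see, spans few vertices. Then, since each heavy set has codegree at least $D$, the union of $j$ heavy sets and $jD$ of their edges has at most $(r-1)j+jD$ vertices and at least $jD/r$ edges. This is still sparse, so codegree alone does not produce the configuration.

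Instead I would apply the dichotomy recursively: the heavy $(r-1)$-sets form an $(r-1)$-graph of large average degree, and one looks inside it. Since the statement only asks for order at most $Cpn$, and $pn\ge r$ is permitted to be large, conclusion (a) can be met with an unbounded-size configuration. Plan: if the heavy sets span at least $\Omega(n)$ edges of $H$, use Hall's theorem to find a subfamily $\mathcal S$ of heavy sets with distinct representative extensions. Then sample to find a set $W$ of size $O(pn)$ inside which many heavy sets lie fully, each keeping $\ge D p$ extensions into $W$. Choosing $D\ge C'/p$ is impossible since $D$ is a constant, so I would instead take $D$ depending on $\sigma$ and use $pd$ scaling. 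I am unsure here. The first attempt would be to show that heavy-set edges can be transferred to case (b) anyway, by treating the whole case uniformly with the sampling below, and to use (a) only when the $(r-1)$-graph produced in (b) collapses because of collisions of $P$.

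\textbf{Main case (bounded codegree).} Delete edges containing heavy sets, keeping $\ge nd/(2r)$ edges, so every $(r-1)$-set has codegree $<D$. Choose $X$ uniformly of size $\lceil pn\rceil$. For each edge $e$ with $|e\cap X|=1$, say $e\cap X=\{x\}$, put $P=e\setminus\{x\}$ into $G$. The expected number of such edges is about $rp(1-p)^{r-1}\,nd/(2r)\ge pnd/4$, using $p\le 1/(16r)$.

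Distinct edges may yield the same $P$, but each $P$ arises from fewer than $D$ edges by bounded codegree. Hence $e(G)\ge pnd/(4D)$ in expectation, and
\[
\avgd(G)\ge (r-1)pnd/(4Dn)\ge c\,pd .
\]
Pick an outcome achieving the expectation, with concentration unnecessary since we only need existence. By construction every $P\in E(G)$ has the required $x_P$, giving (b).

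\textbf{Main obstacle.} The real difficulty is the heavy case: producing a dense configuration of order $O(pn)$ from many edges through high-codegree $(r-1)$-sets. The first attempt would be to choose $D$ large and greedily build, via Hall, a family of $j\approx pn/D$ heavy sets with $D$ private extensions each. I would then check whether the count
\[
\frac{r\cdot jD}{(r-1)j+jD}\to r
\]
can be pushed above $\sigma$, which works when $\sigma<r$. For $\sigma\ge r$ I would fall back on the recursive use of the lemma for the heavy $(r-1)$-graph.
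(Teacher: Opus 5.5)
\textbf{There is a genuine gap.} Your bounded-codegree case is correct. For $\sigma<r$ your argument is in fact complete: if some $(r-1)$-set has codegree at least $D\ge\sigma(r-1)/(r-\sigma)$, then $D$ edges through it have average degree at least $\sigma$ on $r-1+D\le Cpn$ vertices. Otherwise your sampling gives (b).

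However, the lemma is stated for every $\sigma>1$, and the paper applies it with $\sigma=u_0+1$ and $\sigma=s$, both of which can exceed $r$. For $\sigma\ge r$ your heavy case is left open. The proposed recursion on the $(r-1)$-graph of heavy sets does not touch the real obstruction. In every configuration you consider, each heavy set brings its own extension vertices, so the density stays below $r$. Your remark that a threshold $\ge C'/p$ is ``impossible since $D$ is a constant'' is the root of the problem. Only $c,C,d_0$ must be independent of $p$; the heaviness threshold is free to depend on $p$.

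\textbf{The missing idea.} Call $P$ heavy if $d_H(P)\ge A/p$, with $A=A(r,\sigma)$.

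\emph{Light case.} You can no longer divide by the codegree, since that would cost a factor $p/A$. Instead, conditioned on $P\cap X=\varnothing$, the random $k$-set $X$ meets the extension vertices of $P$ with probability at least $1-e^{-p\,d_{H_0}(P)}\ge c_0\,p\,d_{H_0}(P)$. This holds because $p\,d_{H_0}(P)=O(A)$. Hence $\mathbb{E}\,e(G_X)\ge c_1p\sum_P d_{H_0}(P)=\Omega(pdn)$.

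\emph{Heavy case} (at least half the edges contain a heavy set).
\begin{enumerate}
\item Use Hall's theorem to assign to each heavy $P$ a set of $\lfloor d_H(P)/r\rfloor$ private edges.
\item Take a random $k$-set $X$. Each heavy $P$ has, in expectation, at least $2h+1$ assigned edges whose extra vertex lies in $X$, where $h=\lceil 2\sigma(r-1)/r\rceil$.
\item For a suitable $X$, the heavy sets with at least $h$ such edges carry at least $dk/(8r^2)$ of them in total.
\item Take $M=\lceil 3\sigma(k+r-1)/r\rceil$ such edges, drawn from at most $M/h+1$ heavy sets; here $d\ge d_0$ guarantees $dk/(8r^2)\ge M$. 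These give $M$ edges on at most $k+(r-1)(M/h+1)$ vertices, hence average degree $>\sigma$ on $O(pn)$ vertices, for every $\sigma$.
\end{enumerate}
The point is that all the sunflower petals are absorbed into the single small set $X$. This is exactly what beats the $rk/(r-1+k)<r$ barrier you ran into.
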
 
\begin{proof} 
Write $k=\lceil pn\rceil$ and $h=\lceil 2\sigma(r-1)/r\rceil$, and set $A=r(2h+2)$.
Call an $(r-1)$-set $P$ \emph{heavy} if $d_H(P)\ge A/p$, and let $\mathcal E=\{e\in E(H): e\text{ contains a heavy $(r-1)$-set}\}$.

\medskip
\noindent
\textbf{Case 1. $|\mathcal E|\ge e(H)/2$.}
\medskip

For every heavy $P$, take $\lfloor d_H(P)/r\rfloor$ copies of $P$. Construct a bipartite graph with these copies and $E(H)$ as two parts, and join a copy of $P$ to every edge containing $P$. Consider any set $\mathcal S$ of copies, and let $m_P$ be the number of copies of $P$ in $\mathcal S$. Then
\[
|\mathcal S|
=\sum_P m_P
\le
\sum_{P:m_P>0}\left\lfloor\frac{d_H(P)}r\right\rfloor
\le
\frac1r\sum_{P:m_P>0}d_H(P).
\]
Each edge of $H$ contains at most $r$ sets $P$ with $m_P>0$, and hence
$
\sum_{P:m_P>0}d_H(P)
\le r|N(\mathcal S)|.
$
Thus $|\mathcal S|\le |N(\mathcal S)|$ and Hall's theorem holds. Hence we may assign to every heavy $P$ exactly $\lfloor d_H(P)/r\rfloor$ distinct edges containing $P$, with no edge assigned to two different sets. Since $d_H(P)\ge A/p\ge r$ for every heavy $P$,
$
\left\lfloor\frac{d_H(P)}r\right\rfloor
\ge \frac{d_H(P)}{2r}.
$
Therefore, using $|\mathcal E|\ge e(H)/2$,
\[
\sum_{P\text{ heavy}}
\left\lfloor\frac{d_H(P)}r\right\rfloor
\ge \frac1{2r}\sum_{P\text{ heavy}}d_H(P)
\ge \frac{|\mathcal E|}{2r}
\ge \frac{e(H)}{4r}
\ge \frac{dn}{4r^2}.
\]

Choose a uniformly random $k$-set $X$. For each heavy $P$, let $Z_P$ be the number of edges assigned to $P$ whose vertex outside $P$ belongs to $X$. Hence 
\begin{equation}\label{Zp}
\E Z_P =\frac{k}{n}\left\lfloor\frac{d_H(P)}r\right\rfloor \ge p\left(\frac{A}{rp}-1\right) \ge 2h+1.
\end{equation}
Call a heavy $(r-1)$-set $P$ \emph{good} if $Z_P\ge \E Z_P/2$. Since
$Z_P<\E Z_P/2$ whenever $P$ is not good,
$
\E[Z_P\mathbf 1_{\{P\text{ good}\}}]\ge \E Z_P/2.$ It follows that some choice of $X$ satisfies 
$
 \sum_{P\text{ good}}Z_P \ge \frac{k}{2n} \sum_{P\text{ heavy}} \left\lfloor\frac{d_H(P)}r\right\rfloor \ge \frac{dk}{8r^2}.    
$
Set $M=\lceil3\sigma(k+r-1)/r\rceil$. Since $k\ge r$, we have $M=O_{r,\sigma}(k)$, so $\frac{dk}{8r^2}\geq M$ when $d_0$ is sufficiently large. Order the good sets as $P_1,P_2,\ldots$ such that $\sum_{i=1}^{t-1} Z_{P_i}<M\le \sum_{i=1}^t Z_{P_i}$ with $t$ minimal. Select all the edges counted by $Z_{P_1},\ldots,Z_{P_{t-1}}$, together with enough of those counted by $Z_{P_t}$ to obtain exactly $M$ edges. Since each $P_i$ is good, we have $Z_{P_i}\ge h$ for every $i<t$ by \eqref{Zp}.
Note that $(t-1)h<M$, and we have $t\le M/h+1$. Since $|X|=k$, $|P_i|=r-1$, and $t\le M/h+1$, the selected edges span at most $k+(r-1)\left(\frac Mh+1\right)$ vertices. Let $F$ be the subhypergraph formed by the selected edges. Then $e(F)=M$ and $|V(F)|\le k+(r-1)\left(\frac Mh+1\right)$. Hence $r e(F)-\sigma|V(F)|\ge rM-\sigma\left(k+(r-1)\left(\frac Mh+1\right)\right)=\left(r-\frac{\sigma(r-1)}h\right)M-\sigma(k+r-1)\ge\frac{rM}{2}-\sigma(k+r-1)>0$. Therefore $\avgd(F)=\frac{r e(F)}{|V(F)|}>\sigma$. Moreover, $|V(F)|=O_{r,\sigma}(k)=O_{r,\sigma}(pn)$, so (a) follows by choosing $C$ sufficiently large.

\medskip
\noindent
\textbf{Case 2. $|\mathcal E|< e(H)/2$.}
\medskip

Let $H_0=H-\mathcal E$. Then $e(H_0)\ge dn/(2r)$ and $d_{H_0}(P)<\frac{2r(h+1)}p$ for every $(r-1)$-set $P$. For a uniformly random $k$-set $X$, let $G_X$ be the $(r-1)$-graph on $V(H)\setminus X$ in which an $(r-1)$-set is an edge if it can be extended to an edge of $H_0$ by some vertex of $X$. For every fixed $(r-1)$-set $P\subseteq V(H)$, since $k\le2pn$, 
\begin{equation}\label{P}
\Pr(P\cap X\ne\varnothing)\le\frac{(r-1)k}{n}\le2(r-1)p<\frac12. 
\end{equation}
Conditioned on $P\cap X=\varnothing$, the set $X$ is a uniformly random $k$-subset of $V(H)\setminus P$. Therefore,
\[
\Pr\bigl(X\cap N_{H_0}(P)=\varnothing\mid P\cap X=\varnothing\bigr)
=
\frac{\binom{n-r+1-d_{H_0}(P)}{k}}
     {\binom{n-r+1}{k}}
\le
\left(1-\frac{d_{H_0}(P)}{n-r+1}\right)^k
\le
\exp\left(-\frac{k\,d_{H_0}(P)}{n-r+1}\right),
\]
where $N_{H_0}(P)=\{x\in V(H)\setminus P:P\cup\{x\}\in E(H_0)\}$.
Moreover, $p\,d_{H_0}(P) \le \frac{k\,d_{H_0}(P)}{n-r+1} \le 4A$. Since $1-e^{-x}\ge c_0x$ for $0\le x\le4A$, where $c_0=c_0(r,\sigma)>0$, we have
\[
\Pr\bigl(X\cap N_{H_0}(P)\ne\varnothing
\mid P\cap X=\varnothing\bigr)
\ge c_0p\,d_{H_0}(P).
\]
Together with \eqref{P}, this gives $\Pr(P\in E(G_X))\ge c_1 p\,d_{H_0}(P)$ for some $c_1=c_1(r,\sigma)>0$. Consequently,
$
\E e(G_X)
\ge c_1p\sum_P d_{H_0}(P)
=c_1pr e(H_0)
\ge c_2pdn
$
for some $c_2=c_2(r,\sigma)>0$. Hence there is a choice of $X$ such that $\avgd(G_X)=\frac{(r-1)e(G_X)}{n-k}\ge c_3pd$ for some constant $c_3=c_3(r,\sigma)>0$. Let $G=G_X$ for such a choice of $X$. For every $P\in E(G)$, choose $x_P\in X$ such that $P\cup\{x_P\}\in E(H_0)$. 
Thus (b) holds. 
\end{proof}

The following result shows that any prescribed amount of excess can be found in a small subhypergraph.

\begin{lemma}\label{lem:excess-localization}
Fix $r\ge2$. There are constants $c,C,d_0>0$, depending only on $r$, such that the following holds. Let $H$ be an $n$-vertex $r$-graph of average degree at least $d\ge d_0$, and let $1\le a\le cdn$ be an integer. Then $H$ contains a subhypergraph $F$ such that $\xi_r(F)\ge a$ and $|V(F)|\le Ca\log\left(2+\frac{dn}{a}\right)$.
\end{lemma}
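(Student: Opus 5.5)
The plan is to work in the incidence graph. For any subhypergraph $F$ we have
\[
\xi_r(F)=(r-1)e(F)-|V(F)|=e(B_F)-|V(B_F)|+e(F),
\]
but it is cleaner to measure progress by the bipartite-graph excess $e(B_F)-|V(B_F)|+e(F)$, which counts ears. Concretely, adding to $F$ a path of $B_H$ with both endpoints already in $B_F$, and all interior vertices new, increases $\xi_r(F)$ by at least one. Indeed, each new hyperedge on the path brings in at most $r-1$ new vertices when it is entered through an old vertex, and the last one closes the ear. So I would grow $F$ by an ear decomposition, starting from the empty hypergraph, and bound the total length of the ears used.

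\emph{Preprocessing.} Delete edges of $H$ until it is minimal subject to $\avgd\ge d/2$, and then pass to the $2$-core $C$ of $B_H$, exactly as in the proof of Proposition~\ref{prop:critical-upper}. The core has $N\le n+e(H)$ vertices and excess $x\ge (r-1)e(H)-n\ge c'dn$, so its average excess per vertex is $x/N=\Omega_r(1)$. This is where $d\ge d_0$ is used. Since $a\le cdn$ with $c$ small, after we have used up an excess of $a$ the remaining graph still has excess at least $x/2$.

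\emph{Iteration.} Suppose $F$ with $\xi_r(F)=j<a$ has been built. Contract $V(B_F)$ to a single vertex $z$ of the core, keeping multi-edges. The contracted graph still has minimum degree at least two after re-coring and excess at least $x-O(j)\ge x/2$. Apply a Moore-type bound, namely the irregular Moore bound of Alon, Hoory and Linial, in place of Lemma~\ref{lem:bipartite-moore}. This yields a cycle through $z$, or any short cycle, of length
\[
O\!\left(\frac{\log(2+N/\deg z)}{\log(1+x/N)}\right),
\]
obtained by growing a BFS ball from $z$. The point is that $\deg z$ is large: every vertex of $F$ lies in the core and has degree $\gtrsim d$ on average. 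A cycle through $z$ lifts to an ear of $B_H$, which raises $\xi_r$ by one. A cycle avoiding $z$ raises it by zero but is absorbed at the next step, so this case at most doubles the cost. Since after step $j$ the contracted vertex has degree $\gtrsim j\cdot d'$, with $d'$ the typical degree, the $j$-th ear has length $O(\log(2+dn/j)/\log(1+x/N))$. Summing over $j\le a$ gives
\[
\sum_{j\le a}\log\!\left(2+\frac{dn}{j}\right)=O\!\left(a\log\!\left(2+\frac{dn}{a}\right)\right)
\]
by Stirling. The number of vertices of $F$ is at most $r$ times the total ear length, which gives the bound $|V(F)|\le Ca\log(2+dn/a)$.

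\emph{Main obstacle.} The hard step is justifying the degree growth of $z$: the ball around $z$ must expand at rate $1+\Omega(x/N)$ starting from $\deg z$, rather than from a single vertex. Vertices of $F$ might have small degree in the residual core, since their edges could have been consumed by $F$ itself. I would handle this by a weighted counting (non-backtracking walk) version of the Moore bound started from the uniform distribution on the edges leaving $F$. If that fails, the fallback is to first restrict, by averaging over vertex subsets, to a subhypergraph on $n'$ vertices with $dn'$ comparable to $a$ up to the desired log factor, and then run the crude version of the iteration, costing $O(\log n'/\log d)$ per ear, inside it.
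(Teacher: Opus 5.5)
There is a genuine gap, and it is exactly the step you flag as the main obstacle. A minor slip first: $\xi_r(F)=e(B_F)-|V(B_F)|$ exactly, since $e(B_F)=re(F)$ and $|V(B_F)|=|V(F)|+e(F)$, so there is no extra $+e(F)$.

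The real problem is that you work in the $2$-core of $B_H$, and there the ball around the contracted vertex $z$ need not expand at all. The $2$-core can contain arbitrarily long paths of degree-$2$ nodes: vertices of $H$ lying in exactly two edges, and edges whose other vertices were peeled off. $F$ may sit at the end of such paths. For example, a Berge $2$-cycle attached to a dense part of $H$ only through two long loose paths is a legitimate first cycle for your procedure. Every ear leaving it then has length comparable to those paths, however long they are.

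None of your proposed remedies repairs this:
\begin{itemize}
\item The Alon--Hoory--Linial bound does not help. Its proof runs the non-backtracking walk from the stationary, edge-uniform distribution and only certifies that a short cycle exists \emph{somewhere}. A walk started from the edges leaving $F$ just follows the paths.
\item The claimed growth $\deg z\gtrsim j\,d'$ is unsupported, since vertices on short cycles can have small degree.
\item The fallback fails. A random set of $n'$ vertices spans average degree about $d(n'/n)^{r-1}$, not $d$. Moreover, when $a=\Theta(dn)$ the target is $|V(F)|=O(a)$, so no per-ear cost of order $\log n'$ is affordable.
\end{itemize}
Cycles avoiding $z$ are not a real issue: the cycle together with the BFS-tree path back to $z$ always gains one unit of excess. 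But that does not shorten anything.

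The missing idea is to pass to a $3$-core rather than a $2$-core, tracking $e-2v$ instead of $e-v$. After trimming $H$ so that $e(H)<dn/r+1$, for $r\ge3$ one has $e(B_H)-2|V(B_H)|=(r-2)e(H)-2n\ge dn/(2r)$ once $d\ge d_0$. Deleting a node of degree at most two never decreases $e-2v$. Hence the remaining graph $J$ satisfies $\delta(J)\ge3$ and $e(J)-|V(J)|\ge e(J)-2|V(J)|\ge dn/(2r)$. For $r=2$, do the same directly in $H$.

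With minimum degree three, BFS expands at rate $2$ from every vertex, and you also do not need $\deg z$ to grow with $j$. In a component with more than $4a$ vertices, fix a connected set $S$ with $|S|=4a$.
\begin{itemize}
\item While $e(J[S])-|S|<a$, at least $3|S|-2e(J[S])=|S|-2(e(J[S])-|S|)\ge 2a+2$ edges leave $S$.
\item So after contracting $S$ to $z$ (keeping parallel edges), the BFS meets a cycle within $O(\log(2+|V(J)|/a))$ levels.
\item The cycle plus the tree path to $z$ raises $e(J[S])-|S|$ by at least one.
\item Starting from $-1$, at most $a+1$ rounds suffice, giving $|S|=O(a\log(2+|V(J)|/a))$, where $|V(J)|\le 2(e(J)-|V(J)|)=O_r(dn)$.
\end{itemize}
Small or disconnected cases are handled by taking whole components, each of which has excess at least half its order.

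Finally, lift $Q=J[S]$ to the hypergraph $F$ with edge set $V(Q)\cap E(H)$. Then $\xi_r(F)\ge e(Q)-|V(Q)|\ge a$ and $|V(F)|\le r|V(Q)|$.
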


\begin{proof}
We first prove the following claim.
\begin{claim}\label{clm:graph-excess}
There is an absolute constant $C>0$ such that the following holds. Let $J$ be a graph with minimum degree at least three, and let $1\le a\le e(J)-|V(J)|$. Then $J$ contains a subgraph $Q$ such that $e(Q)-|V(Q)|\ge a$ and $|V(Q)|\le Ca\log\left(2+\frac{e(J)-|V(J)|}{a}\right)$.
\end{claim}

Assume first that $J$ is connected. Since $\delta(J)\ge3$,
$
e(J)-|V(J)|\ge \frac{|V(J)|}{2}.
$
If $|V(J)|\le4a$, we may take $Q=J$. Otherwise, choose a connected set $S\subseteq V(J)$ with $|S|=4a$. If
$
e(J[S])-|S|\ge a,
$
we are done. Otherwise, the number of edges between $S$ and $V(J)\setminus S$ is at least
$
3|S|-2e(J[S])
=
|S|-2\bigl(e(J[S])-|S|\bigr)
\ge2a+2.
$
Contract $S$ to a single vertex $z$ and delete the resulting loops while keeping parallel edges. Every vertex other than $z$ has degree at least three, while $d(z)\ge2a+2$. A breadth-first search from $z$ finds a cycle $C$ within $O\left(\log\left(2+\frac{|V(J)|}{a}\right)\right)$ levels. Let $P$ be the path in the breadth-first-search tree from $z$ to the vertex of $C$ closest to $z$. Then $C\cup P$ is connected and unicyclic, contains $z$, and has $O(\log(2+|V(J)|/a))$ vertices. Replacing $z$ by $S$ therefore adds at least one more edge than vertex, increasing $e(J[S])-|S|$ by at least one. Initially $J[S]$ is connected, so $e(J[S])-|S|\ge-1$, and each iteration increases this by at least one. Thus after at most $a+1$ iterations we obtain a set $S$ such that $e(J[S])-|S|\ge a$. Since each iteration adds $O(\log(2+|V(J)|/a))$ vertices, the subgraph $Q=J[S]$ satisfies $|V(Q)|=O\left(a\log\left(2+\frac{|V(J)|}{a}\right)\right)$. Finally, $\delta(J)\ge3$ implies $|V(J)|\le2(e(J)-|V(J)|)$, and the connected case follows.

Now suppose that $J$ is disconnected. Every component $J_i$ satisfies $e(J_i)-|V(J_i)|\ge|V(J_i)|/2>0$. If some component has excess at least $a$, apply the connected case to that component. Otherwise, choose components successively until their total excess first reaches $a$. Their total excess is less than $2a$, and their union has fewer than $4a$ vertices. This proves the claim.
\hfill $\blacksquare$
\medskip

We now return to $H$. By deleting edges, we may assume that $\frac{dn}{r}\le e(H)<\frac{dn}{r}+1.$ Suppose first that $r=2$. Repeatedly delete vertices of degree at most two, and let $J$ be the remaining graph. Such deletion does not decrease $e-2v$, so for $d_0$ sufficiently large, $e(J)-2|V(J)|\ge e(H)-2n\ge \frac{dn}{4}$. Taking $c\le 1/4$, we have $a\le cdn\le \frac{dn}{4}\le e(J)-|V(J)|$. Thus, applying Claim~\ref{clm:graph-excess} to $J$ gives a subgraph $F$ satisfying $\xi_2(F)=e(F)-|V(F)|\ge a$
and $|V(F)|\le Ca\log\left(2+\frac{dn}{a}\right)$.

Now let $r\ge3$. Consider the incidence graph $B_H$. Hence, for $d_0$ sufficiently large,
$
e(B_H)-2|V(B_H)|
=(r-2)e(H)-2n
\ge \frac{dn}{2r}.
$
Repeatedly delete vertices of degree at most two from $B_H$, and let $J$ be the remaining graph. Thus,
$
e(J)-|V(J)|
\ge e(J)-2|V(J)|
\ge \frac{dn}{2r},
$
and we may choose $c=c(r)>0$ so that $a\le cdn\le e(J)-|V(J)|$.
Applying Claim~\ref{clm:graph-excess} to $J$, we obtain a subgraph
$Q\subseteq J$ with $e(Q)-|V(Q)|\ge a$.
Note that
$
e(J)-|V(J)|
\le e(B_H)
= re(H)
=O_r(dn),
$
and we have
$
|V(Q)|
\le
Ca\log\left(2+\frac{dn}{a}\right).
$
Delete all isolated vertices from $Q$; this does not decrease $e(Q)-|V(Q)|$. Let $\mathcal F=V(Q)\cap E(H)$, and let $F$ be the subhypergraph of $H$ with edge set $\mathcal F$ and vertex set $\bigcup_{e\in\mathcal F}e$. Its incidence graph $B_F$ is obtained from $Q$ by adding missing vertices together with a first incident edge, and then adding the remaining incidences. Consequently, $ \xi_r(F)=e(B_F)-|V(B_F)|\ge e(Q)-|V(Q)|\ge a. $ Finally, $e(F)=|\mathcal F|\le|V(Q)|$, and hence $ |V(F)|\le r e(F)\le r|V(Q)|\le C_ra\log\left(2+\frac{dn}{a}\right). $ Increasing $C=C(r)$ completes the proof. 
\end{proof}

We will use the following uniform version of a result of Janzer, Sudakov and Tomon \cite{JST}, allowing the target average degree to vary in a fixed neighborhood of $s_0$ while the constants are uniform.
\begin{lemma}\label{lem:JST-uniform}
Fix $s_0>2$. There are constants $\eta,C,d_0>0$, depending only on $s_0$,
such that the following holds. Let $|s-s_0|\le\eta$ and
$
d_0\le d\le N^{(s-2)/s}.
$
Every $N$-vertex graph of average degree at least $d$ contains a nonempty
subgraph $F$ of average degree at least $s$ such that
\[
|V(F)|\le N d^{-s/(s-2)}(\log d)^C.
\]
\end{lemma}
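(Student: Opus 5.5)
The plan is to obtain Lemma~\ref{lem:JST-uniform} by reducing it to Theorem~\ref{thm:JST} at a single target value, chosen slightly above the whole window. Set $s_1=s_0+2\eta$, where $\eta\le (s_0-2)/8$ is small enough that every $s$ with $|s-s_0|\le\eta$ satisfies $s\le s_1$ and $s>2+(s_0-2)/2$. A subgraph of average degree at least $s_1$ also has average degree at least $s$, so it suffices to find such a subgraph on at most $Nd^{-s/(s-2)}(\log d)^C$ vertices. Theorem~\ref{thm:JST} with target $s_1$ gives one on at most $Nd^{-s_1/(s_1-2)}(\log d)^{C(s_1)}$ vertices, provided $d\le N^{(s_1-2)/s_1}$. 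Since $x\mapsto x/(x-2)$ is decreasing on $(2,\infty)$, we have $s_1/(s_1-2)<s/(s-2)$. Hence, in the good range, $Nd^{-s_1/(s_1-2)}$ is larger than the desired bound by the polynomial factor $d^{\,s/(s-2)-s_1/(s_1-2)}$, so this naive reduction loses too much.

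To avoid that loss, I would split on where $d$ sits relative to $N$. The key observation is that Theorem~\ref{thm:JST} may be applied with the target $s$ itself, as long as its constant is uniform for $s$ in the compact window. The natural route is to revisit the structure of the proof in~\cite{JST}: every constant there is a continuous function of $s$ on compact subsets of $(2,\infty)$. As a black-box alternative, I would use a monotonicity trick. Given $s$ in the window, apply Theorem~\ref{thm:JST} at the finitely many grid points $s'\in\{s_0-\eta,\,s_0-\eta+\delta,\dots\}$ with spacing $\delta=(\log d)^{-1}$-scale. Choosing the grid point $s'\ge s$ closest to $s$, the exponent difference $s/(s-2)-s'/(s'-2)=O(\delta)$ costs only a factor $d^{O(\delta)}=O(1)$. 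However, this uses infinitely many grid points as $d\to\infty$, so the constants $C(s')$ are again non-uniform. So the real content is uniformity, and I would settle it by the following cleaner reduction.

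Fix a finite grid $s_0-\eta=\sigma_0<\sigma_1<\dots<\sigma_k=s_0+2\eta$ with spacing at most $\eta$, and set $C^*=\max_j C(\sigma_j)$. For a given $s$, take the smallest $\sigma_j\ge s$. If $d$ is at most $\exp\bigl((\log N)^{1/2}\bigr)$-ish the polynomial loss is a problem, so instead pass to a subgraph. First, use a random induced subgraph on $N'$ vertices, with $N'$ chosen so that the density drops in a controlled way: sampling at rate $q$ leaves average degree about $qd$ on about $qN$ vertices. Pick $q$ so that $(qd)^{\sigma_j/(\sigma_j-2)}/(qN)$ matches $d^{s/(s-2)}/N$. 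Then apply Theorem~\ref{thm:JST} at $\sigma_j$ to the sample, which yields exactly $Nd^{-s/(s-2)}$ times $(\log d)^{C^*}$. This exponent matching is a one-line computation, since both exponents exceed $1$ and $q\le1$ is solvable whenever $\sigma_j\ge s$. We also need the sample to satisfy $qd\le (qN)^{(\sigma_j-2)/\sigma_j}$, which follows from $d\le N^{(s-2)/s}$ and the same monotonicity, and $qd\ge\sigma_j$, which holds for $d\ge d_0$ because $\sigma_j-s\le\eta$ keeps $q\ge d^{-O(\eta)}$.

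The main obstacle is this sampling step: checking that $q=(d^{-s/(s-2)}\cdot d^{\sigma_j/(\sigma_j-2)})^{1/(\sigma_j/(\sigma_j-2)-1)}$ lies in $[d^{-1/2},1]$ for small $\eta$, and that a random $q$-subset retains average degree $\Omega(qd)$. The latter follows by first passing to a subgraph of minimum degree at least $d/2$ and using expectation of edges versus vertices, with a constant-factor loss in $d$ absorbed by slightly enlarging the grid. Every constant then depends only on the finitely many $\sigma_j$, and hence only on $s_0$.
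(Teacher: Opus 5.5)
The reduction has a genuine gap at the sampling step, and a sign error there hides it. Write $\beta=s/(s-2)$ and $\beta_j=\sigma_j/(\sigma_j-2)$, so $1<\beta_j\le\beta$. Apply Theorem~\ref{thm:JST} at target $\sigma_j$ to a $q$-sample, which has about $qN$ vertices and average degree about $qd$. This gives a bound of order $qN(qd)^{-\beta_j}=q^{1-\beta_j}Nd^{-\beta_j}$. Since $\beta_j>1$, this bound grows as $q$ decreases, so sampling can only make it worse. Your matching equation, solved correctly, is $q^{\beta_j-1}=d^{\beta-\beta_j}$, i.e.\ $q=d^{(\beta-\beta_j)/(\beta_j-1)}\ge 1$, which no sample can achieve. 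The expression you wrote is the reciprocal of this. Plugging it in gives $Nd^{\beta-2\beta_j}$, which is even further from $Nd^{-\beta}$. So the polynomial loss $d^{\beta-\beta_j}=d^{2(\sigma_j-s)/((s-2)(\sigma_j-2))}$ that you identified in your first paragraph is never removed.

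In fact no black-box reduction of this shape can work. Fix $\sigma_j$. The random-graph-with-alteration construction gives $N$-vertex graphs of average degree $d$ in which every subgraph of average degree at least $\sigma_j$ has at least $c_{\sigma_j}Nd^{-\beta_j}$ vertices. This exceeds the target $Nd^{-\beta}(\log d)^C$ by $d^{\Omega(\sigma_j-s)}$. With a fixed finite grid, $\sigma_j-s$ is of constant order for some $s$ in the window, for example midpoints between grid points.

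Hence any argument that always outputs average degree at least $\sigma_j$ needs $\sigma_j-s=O(\log\log d/\log d)$. That forces a $d$-dependent grid, which brings back exactly the non-uniformity of $C(\sigma_j)$ you were trying to avoid.

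The missing ingredient is the route you set aside: open the proof of \cite[Theorem~2.13]{JST} and check that its constants are uniform for $s$ in the window. This is what the paper does. It takes $\rho=s/2$ and $\eta=\min\{1,(s_0-2)/2\}$, so that $\rho$ is bounded above and $1-1/\rho\ge(s_0-2)/(s_0+2)$. Then $c_0=1/(16\rho(\lceil 2\rho\rceil+3))$ is bounded below, and $c_1(\rho)$, $c_2(\rho)$ are bounded above, uniformly in the window. The preliminary conditions of \cite[Lemma~2.12]{JST} hold once $d\ge d_0(s_0)$. The final estimate closes as soon as $C(s_0-2)/(s_0+2)>4$.
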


We defer the proof of Lemma \ref{lem:JST-uniform} to Appendix~\ref{JST}.

\medskip

Now we establish a stable version of the supercritical upper bound, in which the target average degree is allowed to vary slightly around a fixed value $u_0$.


\begin{proposition}\label{prop:stable-supercritical}
Fix an integer $r\ge2$ and a real number $u_0>c_r=r/(r-1)$. There are constants $A,C,d_0>0$, depending only on $r$ and $u_0$, such that the following holds. Let $d\ge d_0$ and let $u>c_r$ satisfy $|u-u_0|\le(\log d)^{-A}$ and $d\le n^{1/\alpha_{r,u}}$. Every $n$-vertex $r$-graph $G$ of average degree at least $d$ contains a nonempty subhypergraph $F$ of average degree at least $u$ such that
\[
|V(F)|\le nd^{-\alpha_{r,u}}(\log d)^C.
\]
\end{proposition}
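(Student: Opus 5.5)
We argue by induction on $r$, keeping all constants uniform in $u$ for $|u-u_0|\le(\log d)^{-A}$. For $r=2$ the statement is Lemma~\ref{lem:JST-uniform}: $\alpha_{2,u}=u/(u-2)$, and $(\log d)^{-A}\le\eta$ once $d_0$ is large. For $r\ge3$ we split into three cases according to whether $u_0<r/(r-2)$, $u_0=r/(r-2)$ or $u_0>r/(r-2)$. Two reductions are used throughout. First, if $d$ exceeds the allowed range for an auxiliary graph, we lower $d$ by deleting edges. Second, the losses of constant factors in $d$ only change the bound by a factor $O(1)$ in $d^{-\alpha_{r,u}}$.

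\emph{Case $u_0<r/(r-2)$.} Fix $K=K(r,u_0)$ with $rK/(2+(r-2)K)>u_0+\eta'$ for some $\eta'>0$. If some pair has codegree at least $K$, then $K$ edges through it span at most $2+(r-2)K$ vertices and already give $F$. Otherwise every pair has codegree less than $K$. Choose a pair $p(e)\subseteq e$ for each edge, and let $\Gamma$ be the graph of distinct pairs; then $\avgd(\Gamma)\ge 2d/(rK)$. Set $q=2u/(r-(r-2)u)$, which is uniformly close to $q_0=2u_0/(r-(r-2)u_0)>2$. A direct computation gives $q/(q-2)=u/((r-1)u-r)=\alpha_{r,u}$, so the range $d\le n^{1/\alpha_{r,u}}$ matches, up to constants, the range $d\le n^{(q-2)/q}$ of Lemma~\ref{lem:JST-uniform}. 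That lemma yields $W$ with $2e_\Gamma(W)\ge q|W|$ and $|W|\le nd^{-\alpha_{r,u}}(\log d)^{C}$. Lift each pair to one hyperedge containing it; this adds at most $r-2$ new vertices per edge. Since $2E/|W|\ge q$ is equivalent to $rE\ge u(|W|+(r-2)E)$, the lifted subhypergraph has average degree at least $u$. We may trim to $E\le q|W|/2+1$ edges, so the order stays $O(|W|)$.

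\emph{Case $u_0>r/(r-2)$.} Apply Lemma~\ref{lem:compression} with $\sigma=u_0+1$ and $pn=nd^{-\alpha_{r,u}}(\log d)^{C_1}$. Outcome (a) finishes directly. In outcome (b) we obtain $X$ with $|X|=\lceil pn\rceil$ and an $(r-1)$-graph $J$ with $\avgd(J)\ge cpd$. Apply the induction hypothesis for $r-1$ with $u_0'=(r-1)u_0/r>c_{r-1}$ and target
\[
q=\frac{(r-1)u}{r}+(\log d)^{-A/2}.
\]
This is admissible if $A$ is larger than the $A'$ of the $(r-1)$-case. The exponents are consistent: for $q=(r-1)u/r$ we have $\alpha_{r-1,q}=u/((r-2)u-r)$, and with $p\approx d^{-\alpha_{r,u}}$ one checks $n(pd)^{-\alpha_{r-1,q}}\approx pn$. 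The perturbation $(\log d)^{-A/2}$ changes these powers only by polylogarithmic factors, which we absorb into $C_1$.

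The step I expect to be the main obstacle is making lifted pieces of $J$ dense enough in $G$, since every lift also pays for the vertices of $X$. A set $W$ with $(r-1)e_J(W)\ge q|W|$ lifts, via the distinct edges $P\cup\{x_P\}$, to a subhypergraph on at most $|W|+|X|$ vertices. So it suffices that $re_J(W)\ge u(|W|+|X|)$. Choose $B\approx(\log d)^{A/2}$ with $rqB/((r-1)(B+1))\ge u$; then this holds as soon as $|W|\ge B|X|$. We iterate: find a dense piece, delete its edges from $J$, and take the union of the pieces. We stop once either the accumulated set satisfies the lifting inequality or its size reaches $B|X|$. Each piece has size about $pn$ and boundedly many edges per vertex, so we need about $B$ pieces. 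The deleted edges number $O(B\,pn)$, which is negligible against $e(J)=\Omega(pdn)$, so $J$ keeps average degree $\Omega(pd)$ at every round. The final order is $O(B\,pn)=nd^{-\alpha_{r,u}}(\log d)^{C}$.

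\emph{Case $u_0=r/(r-2)$.} Here $(r-1)u_0/r=c_{r-1}$, so induction is unavailable, and we use Lemma~\ref{lem:excess-localization} on $J$ instead. A subhypergraph of $J$ with $e$ edges on a set $V$ satisfying $\xi_{r-1}=(r-2)e-|V|\ge a$ lifts to one on $|V|+|X|$ vertices. Its average degree is at least $u$ provided $re\ge u(|V|+|X|)$. Using $u\le r/(r-2)+(\log d)^{-A}$, this follows from $a\ge(1+o(1))|X|$ together with $|V|(\log d)^{-A}\ll a$. So we take $a=2|X|$. Lemma~\ref{lem:excess-localization} then gives $|V|\le Ca\log(2+pdn/a)=O(pn\log d)$, which makes the error term negligible for $A\ge2$. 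It remains to check $a\le c\cdot cpd\cdot n$, which holds since $d$ is large.
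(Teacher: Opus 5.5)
Your proposal is correct and follows the paper's proof essentially step for step. It uses the same induction on $r$ with base case Lemma~\ref{lem:JST-uniform}, and the same codegree/pair-graph reduction for $u_0<r/(r-2)$. At $u_0=r/(r-2)$ it applies Lemma~\ref{lem:compression} and then Lemma~\ref{lem:excess-localization} with $a=2|X|$. For $u_0>r/(r-2)$ it uses the perturbed target $q=(r-1)u/r+(\log d)^{-A/2}$ with $B\approx(\log d)^{A/2}$ and repeated injective lifting. Your $p=d^{-\alpha_{r,u}}(\log d)^{C_1}$ differs from the paper's $p=(\log d)^4d^{-b}$, with $b=q/((r-1)(q-1))$, only by polylogarithmic and $1+o(1)$ factors.

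Three small things to tighten:
\begin{enumerate}
\item You need $A>2A'$, not merely $A>A'$, since $|q-(r-1)u_0/r|$ is of order $(\log d)^{-A/2}$.
\item Justify the $O(B\,pn)$ bound on deleted edges differently. Counting pieces does not work, since pieces can be small and dense. Instead note that every deleted edge lies in $J[W]$. So while the lifting inequality fails, there are fewer than $u(|W|+|X|)/r\le u(B+1)|X|/r$ of them.
\item In the first case, lift $xy$ to the edge $e$ with $p(e)=xy$, so that the lifts are distinct.
\end{enumerate}
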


\begin{proof}
We argue by induction on $r$. For $r=2$, the result follows from Lemma~\ref{lem:JST-uniform}, since $\alpha_{2,u}=u/(u-2)$.
Assume $r\ge3$ and that the result holds for $(r-1)$-graphs. 
Write $\ell:=\log d$.
We divide the proof into three cases. In each case below, we first choose $A > 2$ and then choose $d_0$ sufficiently
large.

\medskip
\noindent\textbf{Case 1: $u_0<r/(r-2)$.}
\medskip

Since $|u-u_0|\le \ell^{-A}=o(1)$, we may assume that $u<r/(r-2)$. Set $q_0:=2u_0/(r-(r-2)u_0)$ and $q:=2u/(r-(r-2)u)$. Then $q_0>2$. Let $\eta>0$ be the constant supplied by Lemma~\ref{lem:JST-uniform} with $s_0=q_0$. By continuity, we have $|q-q_0|\le\eta$. Moreover, $q/(q-2)=\alpha_{r,u}$ and $rq/(2+(r-2)q)=u$. 
Let $M:=\lceil q\rceil$. If some pair has codegree at least $M$, then any $M$ such edges form a subhypergraph on at most $2+(r-2)M$ vertices and of average degree at least $rM/(2+(r-2)M)\ge u$, and we are done.

Hence we may assume that every pair has codegree at most $M-1$. For each $e\in E(G)$, choose an arbitrary pair
$p(e)\in\binom{e}{2}$, and let $J$ be the graph whose edges are the distinct pairs $p(e)$. Since every pair is contained in at most $M-1$ edges of $G$, we have $e(J)\ge e(G)/(M-1)$. Since $M=O_{r,u_0}(1)$, there is a constant $c_0=c_0(r,u_0)>0$ such that $\avgd(J)\ge c_0d$. We may assume that $c_0\le1$. Since $\alpha_{r,u}=q/(q-2)$ and $d\le n^{1/\alpha_{r,u}}$, we have $c_0d\le n^{(q-2)/q}$. Applying Lemma~\ref{lem:JST-uniform} with $s_0=q_0$, $s=q$ and density parameter $c_0d$, we obtain a subgraph $J'$ of average degree at least $q$ such that $|V(J')|\le n(c_0d)^{-q/(q-2)}(\log(c_0d))^C\le nd^{-\alpha_{r,u}}(\log d)^{O_{r,u_0}(1)}$. Let $t:=|V(J')|$ and $m:=\lceil qt/2\rceil$, and choose $m$ edges of $J'$. For each chosen edge $xy$, select an edge $e_{xy}\in E(G)$ such that $p(e_{xy})=xy$, and let $F$ be the subhypergraph of $G$ consisting of these edges. Then $|V(F)|\le t+(r-2)m=O_{r,u_0}(t)$ and $\avgd(F)\ge rm/(t+(r-2)m)\ge rq/(2+(r-2)q)=u$, as required.

\medskip

We shall use the following claim in the remaining two cases.
\begin{claim}\label{clm:injective-lifting}
Let $X\subseteq V(G)$ have size $k$, and let $J$ be an $(r-1)$-graph on $V(G)\setminus X$. Suppose that for every $e\in E(J)$ there is $x_e\in X$ such that $e\cup\{x_e\}\in E(G)$. If $W\subseteq V(J)$ satisfies $re_J(W)\ge u(|W|+k)$, then $G$ contains a subhypergraph of average degree at least $u$ and order at most $|W|+k$.
\end{claim}
Indeed, let $F$ be the subhypergraph of $G$ consisting of the edges $e\cup\{x_e\}$ with $e\in E(J[W])$. Since $e\subseteq V(G)\setminus X$ and $x_e\in X$, these lifted edges are pairwise distinct. Hence $e(F)=e_J(W)$, while $V(F)\subseteq W\cup X$. Therefore,
$
\avgd(F)=\frac{re(F)}{|V(F)|}\ge \frac{re_J(W)}{|W|+k}\ge u.
$
\hfill $\blacksquare$

\medskip
\noindent\textbf{Case 2: $u_0=r/(r-2)$.}
\medskip

Set $p:=\ell^4d^{-\alpha_{r,u}}$ and $k:=\lceil pn\rceil$. Since $d\le n^{1/\alpha_{r,u}}$, we have $pn\ge\ell^4$. Since $\alpha_{r,u_0}=1$ and $|u-u_0|\le\ell^{-A}$, we have $\alpha_{r,u}=1+O_r(\ell^{-A})$. Thus, $pd=\ell^4d^{1-\alpha_{r,u}}=\Theta_r(\ell^4)$ and, in particular, $p=o(1)$. For sufficiently large $d$, we have $p\le1/(16r)$ and $pn\ge r$. We may therefore apply Lemma~\ref{lem:compression} with $\sigma=u_0+1$. Since $u=u_0+o(1)<u_0+1$, (a) gives a subhypergraph of average degree at least $u$ on at most $Cpn=Cn\ell^4d^{-\alpha_{r,u}}$ vertices, and we are done.

Now suppose (b) holds. Then there are a set $X$ of size $k$ and an $(r-1)$-graph $J$ on $V(G)\setminus X$ with average degree at least $c_1pd$, where $c_1=c_1(r,u_0)>0$. Moreover, for every $e\in E(J)$ there is $x_e\in X$ such that $e\cup\{x_e\}\in E(G)$. We have $c_1pd(n-k)/(2k)=(1+o(1))c_1d/2\to\infty$. Hence, Lemma~\ref{lem:excess-localization}, applied to $J$ with density parameter $c_1pd$ and $a=2k$, gives a subhypergraph $Q\subseteq J$ such that $(r-2)e(Q)-|V(Q)|\ge 2k$ and $|V(Q)|\le C2k\log(2+c_1pd(n-k)/(2k))=O_{r,u_0}(k\ell)$. Since $u_0(r-2)=r$, we have $re(Q)-u(|V(Q)|+k)=u_0((r-2)e(Q)-|V(Q)|)-(u-u_0)|V(Q)|-uk$. Moreover, $|u-u_0||V(Q)|=O_{r,u_0}(k\ell^{1-A})=o(k)$. Hence $re(Q)-u(|V(Q)|+k)\ge 2u_0k-uk-o(k)$. We obtain $re(Q)\ge u(|V(Q)|+k)$. We apply Claim~\ref{clm:injective-lifting} with $W=V(Q)$, which yields a subhypergraph $F$ of average degree at least $u$ and order at most $|V(Q)|+k=O_{r,u_0}(k\ell)\le nd^{-\alpha_{r,u}}\ell^{O_{r,u_0}(1)}$, as required.

\medskip
\noindent\textbf{Case 3: $u_0>r/(r-2)$.}
\medskip

Set $q:=(r-1)u/r+\ell^{-A/2}$ and $b:=q/((r-1)(q-1))$. Let $A'$ be the constant supplied by the induction hypothesis for $(r-1)$-graphs with fixed target $q_0=(r-1)u_0/r$, and choose $A>2A'+2$. A direct calculation gives $0\le\alpha_{r,u}-b=O_{r,u_0}(\ell^{-A/2})$ and $(1-b)\alpha_{r-1,q}=b$. Hence $d^{\alpha_{r,u}-b}=\exp\bigl(O_{r,u_0}(\ell^{1-A/2})\bigr)=1+o(1)$. Set $p:=\ell^4d^{-b}$ and $k:=\lceil pn\rceil$. Since $b\le\alpha_{r,u}$ and $d\le n^{1/\alpha_{r,u}}$, we have $pn=\ell^4nd^{-b}\ge\ell^4$. Let $B:=\lceil (r-1)u\ell^{A/2}/r\rceil$. Then $B=\ell^{O_{r,u_0}(1)}$,
$rqB/((r-1)(B+1))\ge u$. Since
$
q=\frac{(r-1)u_0}{r}+o(1),
$
we have $b=\alpha_{r,u_0}+o(1)$. Hence
$b\ge\alpha_{r,u_0}/2$ for sufficiently large $d$, and therefore
$
pB
=O_{r,u_0}\left(\ell^{4+A/2}
d^{-\alpha_{r,u_0}/2}\right)
=o(1).
$ 
Since $pn\to\infty$,
we have $k=(1+o(1))pn$, so $(B+1)k/n=o(1)$. Hence $Bk<n-k$. Apply Lemma~\ref{lem:compression} with $\sigma=u_0+1$. Since $u<u_0+1$ for sufficiently large $d$, if (a) is true, then there exists a subhypergraph of average degree at least $u$ on $O_{r,u_0}(pn)\le nd^{-\alpha_{r,u}}\ell^{O_{r,u_0}(1)}$ vertices, as required.

Now suppose (b) holds. Then there are a set $X$ of size $k$ and an $(r-1)$-graph $J$ on $V(G)\setminus X$ with average degree at least $cpd$, where $c=c(r,u_0)>0$, such that for every $e\in E(J)$ there is $x_e\in X$ with $e\cup\{x_e\}\in E(G)$. We construct $W\subseteq V(J)$ iteratively, starting with $W=\varnothing$. At any stage, if $re_J(W)\ge u(|W|+k)$, then apply Claim~\ref{clm:injective-lifting} and we are done. Otherwise, as long as $|W|<Bk$, we have $e_J(W)=O_{r,u_0}(Bk)$. 
For the current set $W$, define $J_W:=J-E(J[W])$. Then $J_W$ has average degree at least $c'pd$ for some $c'=c'(r,u_0)>0$, since $e(J)=\Omega_{r,u_0}(pdn)$ and $Bk=O_{r,u_0}(Bpn)=o(pdn)$.
Since $q=(r-1)u_0/r+o(1)>c_{r-1}$, we have
$\alpha_{r-1,q}=\Theta_{r,u_0}(1)$. Moreover, using
$(1-b)\alpha_{r-1,q}=b$, $d^b\le n$, and $k=o(n)$, we obtain
$
\left(\frac{d^{1-b}}2\right)^{\alpha_{r-1,q}}
=2^{-\alpha_{r-1,q}}d^b
\le n-k
$
for sufficiently large $d$. We also have $d^{1-b}/2\le c'pd$ and $\log(d^{1-b}/2)=\Theta_{r,u_0}(\ell)$. Moreover, for sufficiently large $d$,
$
|q-q_0|
\le \frac{r-1}{r}|u-u_0|+\ell^{-A/2}
\le 2\ell^{-A/2}
\le \bigl(\log(d^{1-b}/2)\bigr)^{-A'}.
$
By the induction hypothesis, $J_W$ contains a nonempty subhypergraph $Q$ of average degree at least $q$ such that
\[
\begin{aligned}
|V(Q)|
\le (n-k)\left(\frac{d^{1-b}}2\right)^{-\alpha_{r-1,q}}
       \ell^{O_{r,u_0}(1)}
\le nd^{-b}\ell^{O_{r,u_0}(1)}
 \le nd^{-\alpha_{r,u}}\ell^{O_{r,u_0}(1)}.
\end{aligned}
\]
Replace $W$ by $W\cup V(Q)$ and repeat the procedure with this new set $W$. Each step adds at least one new vertex, and the selected edge sets are pairwise disjoint. 

This process terminates either when $re_J(W)\ge u(|W|+k)$ or when $|W|\ge Bk$. In the former case, we apply Claim~\ref{clm:injective-lifting} and obtain a desired subhypergraph. In the latter case, if the subhypergraphs obtained are $Q_1,\ldots,Q_t$, then
\[
e_J(W)\ge\sum_{i=1}^t e(Q_i)\ge\frac{q}{r-1}\sum_{i=1}^t|V(Q_i)|\ge\frac{q|W|}{r-1},
\]
and hence $re_J(W)/(|W|+k)\ge rqB/((r-1)(B+1))\ge u$. So we can apply Claim~\ref{clm:injective-lifting} and obtain a desired subhypergraph. Note that when this process terminates, $|W|\le Bk+nd^{-\alpha_{r,u}}\ell^{O_{r,u_0}(1)}$. Since $Bk\le nd^{-\alpha_{r,u}}\ell^{O_{r,u_0}(1)}$ and $k\le Bk$, the resulting subhypergraph has order at most $nd^{-\alpha_{r,u}}\ell^{O_{r,u_0}(1)}$.
\end{proof}

Taking $u_0=u=s$ in Proposition~\ref{prop:stable-supercritical} proves
Theorem~\ref{thm:main}.

\begin{remark}
We explain here why Proposition~\ref{prop:stable-supercritical} needs uniform constants over a small interval of target average degrees. In Case~3 of the proof, after compressing an $r$-graph to an $(r-1)$-graph, the target average degree becomes
\[
q=\frac{(r-1)u}{r}+(\log d)^{-A/2}.
\]
The term $\ell^{-A/2}$ in
$q=(r-1)u/r+\ell^{-A/2}$ ensures that the average degree remains at least
$u$ after we add the $k=|X|$ vertices of $X$ in the lifting. 
Since $q$ depends on $d$, a statement proved separately for each fixed target average degree cannot provide uniform constants in the induction.
\end{remark}

\subsection{The high-density regime}
At the critical density $d=n^{1/\alpha_{r,s}}$, Theorem~\ref{thm:lower} shows that the order of the smallest subhypergraph of average degree at least $s$ cannot be bounded independently of $n$. We now show that this is no longer true as soon as the exponent of $n$ is increased by any fixed $\varepsilon>0$: If $d\ge n^{1/\alpha_{r,s}+\varepsilon}$, then such a subhypergraph already exists on $O_{r,s,\varepsilon}(1)$ vertices. Moreover, for fixed $r$, $s$ and $\varepsilon$, such a subhypergraph can be found in time $n^{O_{r,s,\varepsilon}(1)}$.

\begin{proof}[Proof of Theorem~\ref{thm:high-density}]
We argue by induction on $r$. For $r=2$, this is Theorem \ref{thm:JST-high}. Let $r\ge3$ and assume that the theorem holds for $(r-1)$-graphs. Let $H$ be an $n$-vertex $r$-graph with $d:=\avgd(H)\ge n^{1/\alpha_{r,s}+\varepsilon}$. 

First suppose that $r/(r-1)<s<r/(r-2)$. Set $q:=2s/(r-(r-2)s)$. Then $q>2$, $1-2/q=1/\alpha_{r,s}$ and $rq/(2+(r-2)q)=s$. Let $M=\lceil q\rceil$. If some pair has codegree at least $M$, then $M$ edges containing this pair span at most $2+(r-2)M$ vertices and have average degree at least $rM/(2+(r-2)M)\ge s$. Otherwise, choose a pair $p(e)\subseteq e$ for each $e\in E(H)$, and let $G$ be the graph formed by the distinct pairs $p(e)$. Since every pair has codegree at most $M-1$, we have $e(G)\ge e(H)/(M-1)$ and hence $\avgd(G)\ge2d/(r(M-1))\ge n^{1-2/q+\varepsilon/2}$. By Theorem~\ref{thm:JST-high}, $G$ contains a subgraph $F$ of average degree at least $q$ and order at most $T_1=T_1(r,s,\varepsilon)$. Let $t=|V(F)|$ and $m=\lceil qt/2\rceil$, and choose $m$ edges of $F$. For each chosen pair $xy$, select an edge $e_{xy}\in E(H)$ with $p(e_{xy})=xy$, and let $F'$ be the $r$-graph formed by these $m$ edges. These edges are distinct and $|V(F')|\le t+(r-2)m$. Hence $\avgd(F')\ge rm/(t+(r-2)m)\ge rq/(2+(r-2)q)=s$.

Now suppose that $s\ge r/(r-2)$. Write $\beta=1/\alpha_{r,s}=(r-1)-r/s$ and $q_0=(r-1)s/r$. Since $(r-2)-\frac{r-1}{q_0}=(r-2)-\frac rs=\beta-1$, we may choose $q>q_0$ sufficiently close to $q_0$ that $\frac1{\alpha_{r-1,q}}<\beta-1+\frac{\varepsilon}{2}$. In particular, $q>(r-1)/(r-2)$. Set $p=r/n$. For all sufficiently large $n$, we have $0<p\le1/(16r)$ and $pn=r$, so we apply Lemma~\ref{lem:compression} with $\sigma=s$. If Lemma~\ref{lem:compression}~(a) holds, then $H$ contains a subhypergraph of average degree at least $s$ on at most $Cr=O_{r,s}(1)$ vertices, and we are done.

So Lemma~\ref{lem:compression} (b) holds. Then there are a set $X\subseteq V(H)$ with $|X|=r$ and an $(r-1)$-graph $G$ on $V(H)\setminus X$, with $N:=|V(G)|=n-r$ and $\avgd(G)\ge cpd=crd/n\ge crn^{\beta-1+\varepsilon}$, where $c=c(r,s)>0$, such that for every $e\in E(G)$ there is $x_e\in X$ with $e\cup\{x_e\}\in E(H)$. Since $1/\alpha_{r-1,q}<\beta-1+\varepsilon/2$, for all sufficiently large $n$ we have $\avgd(G)\ge N^{1/\alpha_{r-1,q}+\varepsilon/4}$. Applying the induction hypothesis with $r-1$, $q$ and $\varepsilon/4$, we obtain a constant $T'=T'(r,s,\varepsilon)$ such that every $N$-vertex $(r-1)$-graph of average degree at least $N^{1/\alpha_{r-1,q}+\varepsilon/4}$ contains a subhypergraph of average degree at least $q$ on at most $T'$ vertices. Since $q>(r-1)s/r$, we choose an integer $B>s(r-1)/(rq-s(r-1))$. Then $rqB/((r-1)(B+1))>s$.


We fix $G$ and construct a set $W\subseteq V(G)$ iteratively, starting with $W=\varnothing$. If $re_G(W)\ge s(|W|+r)$, then lifting the edges of $G[W]$ through $X$ gives the required subhypergraph. Otherwise, if $|W|<Br$, then $e_G(W)<s(B+1)$. For the current set $W$, define the auxiliary $(r-1)$-graph $G_W:=G-E(G[W])$. Since $\avgd(G_W)=\avgd(G)-(r-1)e_G(W)/N$, we have $\avgd(G_W)\ge N^{1/\alpha_{r-1,q}+\varepsilon/4}$. By the induction hypothesis, $G_W$ contains a nonempty subhypergraph $J$ of average degree at least $q$ and order at most $T'$. Replace $W$ by $W\cup V(J)$ and repeat. Since $G_W$ has no edge contained in the previous $W$, each step adds at least one new vertex, and the edge sets selected in distinct steps are pairwise disjoint.

The process terminates either when $re_G(W)\ge s(|W|+r)$ or when $|W|\ge Br$. In the former case, lifting gives the required subhypergraph. In the latter case, let $J_1,\ldots,J_t$ be the subhypergraphs selected during the iteration. Then
\[
e_G(W)\ge\sum_{i=1}^t e(J_i)\ge\frac{q}{r-1}\sum_{i=1}^t|V(J_i)|\ge\frac{q|W|}{r-1}.
\]
Hence $re_G(W)/(|W|+r)\ge rqB/((r-1)(B+1))>s$, so lifting again gives the required subhypergraph. At the first stopping step, $|W|<Br+T'$, and the lifted subhypergraph has fewer than $(B+1)r+T'$ vertices.

Taking $T$ to be the maximum of the bounds obtained in the two cases proves the existence statement. Finally, the algorithmic statement follows by enumerating all vertex sets $U\subseteq V(H)$ of size at most $T$ and checking whether $\avgd(H[U])\ge s$. Since $T=T(r,s,\varepsilon)$ is independent of $n$, this takes time $n^{O_{r,s,\varepsilon}(1)}$.
\end{proof}

\section*{AI declaration}
ChatGPT 5.6 was used during the development of this work. Its main mathematical contribution concerned Proposition~\ref{prop:stable-supercritical}. In particular, AI-assisted discussions were used to explore the proof strategy and to suggest and check some of the intermediate parameter choices and estimates appearing in its proof. The resulting argument was subsequently reorganized, completed, and independently verified by the authors.

\appendix
\section{Proof of Lemma \ref{lem:JST-uniform}}\label{JST}
\begin{proof}[Proof of Lemma \ref{lem:JST-uniform}]
Let $\eta=\min\{1,(s_0-2)/2\}$ and write $\rho=s/2$.
Since $|s-s_0|\le\eta$, we have $\frac{s_0+2}{4}\le \rho\le \frac{s_0+1}{2}$.
In particular, $\rho$ is bounded above and
$1-1/\rho\ge (s_0-2)/(s_0+2)>0$.

We first check that the constants in the proof of \cite[Theorem~2.13]{JST} can be chosen uniformly in this range. By \cite[Lemma~2.9]{JST} and \cite[Lemma~2.12]{JST}, we take $c_0=1/(16\rho(\lceil2\rho\rceil+3))$, and so $c_0$ is bounded below by a positive constant depending only on $s_0$. Moreover, all the preliminary estimates in the proof of \cite[Theorem~2.13]{JST} can be made uniform in this range. Indeed, $\rho$ is bounded above and $1-1/\rho$ is bounded away from zero, while $\lceil 2\rho\rceil$ is bounded in terms of $s_0$. Thus, for any fixed $C>2$, after choosing $d_0$ sufficiently large in terms of $s_0$, the estimates preceding the final inequality in their proof, including $t\ge20\rho\log d$ and conditions 1--3 of \cite[Lemma~2.12]{JST}, hold uniformly. In the final estimate, the constants $c_1=c_1(\rho)$ and $c_2=c_2(\rho)$ are also bounded above by constants depending only on $s_0$, while $c_0$ is bounded below by a positive constant depending only on $s_0$. Hence it is enough that $(\log d)^{C(1-(1-\varepsilon)/\rho)}$ dominates a constant multiple of $(\log d)^{4-(1-\varepsilon)/\rho}$. Since $1-(1-\varepsilon)/\rho\ge(s_0-2)/(s_0+2)$ and $4-(1-\varepsilon)/\rho<4$, it is enough to choose $C$ such that $C(s_0-2)/(s_0+2)>4$, and then choose $d_0$ sufficiently large. Therefore, uniformly for $|s-s_0|\le\eta$, the proof of \cite[Theorem~2.13]{JST} gives a subgraph $F$ of average degree at least $s$ such that
$
|V(F)|\le N d^{-s/(s-2)}(\log d)^C.
$
\end{proof}

\end{document}